%% LyX 2.3.6.2 created this file.  For more info, see http://www.lyx.org/.
%% Do not edit unless you really know what you are doing.
\pdfoutput=1
\documentclass[a4paper]{amsart}
\usepackage[T1]{fontenc}
\setcounter{tocdepth}{1}
\usepackage{mathrsfs}
\usepackage{amstext}
\usepackage{amsthm}
\usepackage{amssymb}
\usepackage{stackrel}
\usepackage[unicode=true,pdfusetitle,
 bookmarks=true,bookmarksnumbered=false,bookmarksopen=false,
 breaklinks=false,pdfborder={0 0 0},pdfborderstyle={},backref=false,colorlinks=false]
 {hyperref}

\makeatletter

%%%%%%%%%%%%%%%%%%%%%%%%%%%%%% LyX specific LaTeX commands.

%%%%%%%%%%%%%%%%%%%%%%%%%%%%%% Textclass specific LaTeX commands.
\numberwithin{equation}{section}
\numberwithin{figure}{section}
\theoremstyle{plain}
\newtheorem{thm}{\protect\theoremname}[section]
\theoremstyle{definition}
\newtheorem{defn}[thm]{\protect\definitionname}
\theoremstyle{plain}
\newtheorem{cor}[thm]{\protect\corollaryname}
\theoremstyle{definition}
\newtheorem{example}[thm]{\protect\examplename}
\theoremstyle{plain}
\newtheorem{prop}[thm]{\protect\propositionname}
\theoremstyle{plain}
\newtheorem{qst}[thm]{\protect\questionname}
\theoremstyle{definition}
\newtheorem{notation}[thm]{\protect\notationname}
\theoremstyle{remark}
\newtheorem{rem}[thm]{\protect\remarkname}
\theoremstyle{plain}
\newtheorem{lem}[thm]{\protect\lemmaname}

%%%%%%%%%%%%%%%%%%%%%%%%%%%%%% User specified LaTeX commands.
\usepackage{tikz-cd}
\usepackage{mathtools}
\usepackage{adjustbox}
\usepackage{enumitem}
\tikzcdset{scale cd/.style={every label/.append style={scale=#1},
    cells={nodes={scale=#1}}}}
\usepackage{quiver}
\usepackage{eucal}
\usepackage{mleftright}
\mleftright

\makeatother

\providecommand{\corollaryname}{Corollary}
\providecommand{\definitionname}{Definition}
\providecommand{\examplename}{Example}
\providecommand{\lemmaname}{Lemma}
\providecommand{\notationname}{Notation}
\providecommand{\propositionname}{Proposition}
\providecommand{\questionname}{Question}
\providecommand{\remarkname}{Remark}
\providecommand{\theoremname}{Theorem}

\begin{document}
\global\long\def\sf#1{\mathsf{#1}}%

\global\long\def\scr#1{\mathscr{{#1}}}%

\global\long\def\cal#1{\mathcal{#1}}%

\global\long\def\bb#1{\mathbb{#1}}%

\global\long\def\bf#1{\mathbf{#1}}%

\global\long\def\frak#1{\mathfrak{#1}}%

\global\long\def\fr#1{\mathfrak{#1}}%

\global\long\def\u#1{\underline{#1}}%

\global\long\def\tild#1{\widetilde{#1}}%

\global\long\def\mrm#1{\mathrm{#1}}%

\global\long\def\pr#1{\left(#1\right)}%

\global\long\def\abs#1{\left|#1\right|}%

\global\long\def\inp#1{\left\langle #1\right\rangle }%

\global\long\def\br#1{\left\{  #1\right\}  }%

\global\long\def\norm#1{\left\Vert #1\right\Vert }%

\global\long\def\hat#1{\widehat{#1}}%

\global\long\def\opn#1{\operatorname{#1}}%

\global\long\def\bigmid{\,\middle|\,}%

\global\long\def\Top{\sf{Top}}%

\global\long\def\Set{\sf{Set}}%

\global\long\def\SS{\sf{sSet}}%

\global\long\def\BS{\sf{bsSet}}%

\global\long\def\Kan{\sf{Kan}}%

\global\long\def\Cat{\mathcal{C}\sf{at}}%

\global\long\def\Grpd{\mathcal{G}\sf{rpd}}%

\global\long\def\Res{\mathcal{R}\sf{es}}%

\global\long\def\imfld{\cal M\mathsf{fld}}%

\global\long\def\ids{\cal D\sf{isk}}%

\global\long\def\ich{\cal C\sf h}%

\global\long\def\SW{\mathcal{SW}}%

\global\long\def\SHC{\mathcal{SHC}}%

\global\long\def\Fib{\mathcal{F}\mathsf{ib}}%

\global\long\def\Bund{\mathcal{B}\mathsf{und}}%

\global\long\def\Fam{\cal F\sf{amOp}}%

\global\long\def\B{\sf B}%

\global\long\def\Spaces{\sf{Spaces}}%

\global\long\def\Mod{\sf{Mod}}%

\global\long\def\Nec{\sf{Nec}}%

\global\long\def\Fin{\sf{Fin}}%

\global\long\def\Ch{\sf{Ch}}%

\global\long\def\Ab{\sf{Ab}}%

\global\long\def\SA{\sf{sAb}}%

\global\long\def\P{\mathsf{POp}}%

\global\long\def\Op{\mathcal{O}\mathsf{p}}%

\global\long\def\Opg{\mathcal{O}\mathsf{p}_{\infty}^{\mathrm{gn}}}%

\global\long\def\Tup{\mathsf{Tup}}%

\global\long\def\H{\cal H}%

\global\long\def\Mfld{\cal M\mathsf{fld}}%

\global\long\def\Disk{\cal D\mathsf{isk}}%

\global\long\def\Acc{\mathcal{A}\mathsf{cc}}%

\global\long\def\Pr{\mathcal{P}\mathrm{\mathsf{r}}}%

\global\long\def\Del{\mathbf{\Delta}}%

\global\long\def\id{\operatorname{id}}%

\global\long\def\Aut{\operatorname{Aut}}%

\global\long\def\End{\operatorname{End}}%

\global\long\def\Hom{\operatorname{Hom}}%

\global\long\def\Ext{\operatorname{Ext}}%

\global\long\def\sk{\operatorname{sk}}%

\global\long\def\ihom{\underline{\operatorname{Hom}}}%

\global\long\def\N{\mathrm{N}}%

\global\long\def\-{\text{-}}%

\global\long\def\op{\mathrm{op}}%

\global\long\def\To{\Rightarrow}%

\global\long\def\rr{\rightrightarrows}%

\global\long\def\rl{\rightleftarrows}%

\global\long\def\mono{\rightarrowtail}%

\global\long\def\epi{\twoheadrightarrow}%

\global\long\def\comma{\downarrow}%

\global\long\def\ot{\leftarrow}%

\global\long\def\corr{\leftrightsquigarrow}%

\global\long\def\lim{\operatorname{lim}}%

\global\long\def\colim{\operatorname{colim}}%

\global\long\def\holim{\operatorname{holim}}%

\global\long\def\hocolim{\operatorname{hocolim}}%

\global\long\def\Ran{\operatorname{Ran}}%

\global\long\def\Lan{\operatorname{Lan}}%

\global\long\def\Sk{\operatorname{Sk}}%

\global\long\def\Sd{\operatorname{Sd}}%

\global\long\def\Ex{\operatorname{Ex}}%

\global\long\def\Cosk{\operatorname{Cosk}}%

\global\long\def\Sing{\operatorname{Sing}}%

\global\long\def\Sp{\operatorname{Sp}}%

\global\long\def\Spc{\operatorname{Spc}}%

\global\long\def\Fun{\operatorname{Fun}}%

\global\long\def\map{\operatorname{map}}%

\global\long\def\diag{\operatorname{diag}}%

\global\long\def\Gap{\operatorname{Gap}}%

\global\long\def\cc{\operatorname{cc}}%

\global\long\def\ob{\operatorname{ob}}%

\global\long\def\Map{\operatorname{Map}}%

\global\long\def\Rfib{\operatorname{RFib}}%

\global\long\def\Lfib{\operatorname{LFib}}%

\global\long\def\Tw{\operatorname{Tw}}%

\global\long\def\Equiv{\operatorname{Equiv}}%

\global\long\def\Arr{\operatorname{Arr}}%

\global\long\def\Cyl{\operatorname{Cyl}}%

\global\long\def\Path{\operatorname{Path}}%

\global\long\def\Alg{\operatorname{Alg}}%

\global\long\def\ho{\operatorname{ho}}%

\global\long\def\Comm{\operatorname{Comm}}%

\global\long\def\Triv{\operatorname{Triv}}%

\global\long\def\triv{\operatorname{triv}}%

\global\long\def\Env{\operatorname{Env}}%

\global\long\def\Act{\operatorname{Act}}%

\global\long\def\loc{\operatorname{loc}}%

\global\long\def\Assem{\operatorname{Assem}}%

\global\long\def\Nat{\operatorname{Nat}}%

\global\long\def\Conf{\operatorname{Conf}}%

\global\long\def\Rect{\operatorname{Rect}}%

\global\long\def\Emb{\operatorname{Emb}}%

\global\long\def\Homeo{\operatorname{Homeo}}%

\global\long\def\mor{\operatorname{mor}}%

\global\long\def\Germ{\operatorname{Germ}}%

\global\long\def\Post{\operatorname{Post}}%

\global\long\def\Sub{\operatorname{Sub}}%

\global\long\def\Shv{\operatorname{Shv}}%

\global\long\def\Cov{\operatorname{Cov}}%

\global\long\def\Disc{\operatorname{Disc}}%

\global\long\def\Open{\operatorname{Open}}%

\global\long\def\Unm{\operatorname{Unm}}%

\global\long\def\hoeq{\mathrm{hoeq}}%

\global\long\def\CSS{\mathrm{CSS}}%

\global\long\def\rel{\mathrm{rel}}%

\global\long\def\Reedy{\mathrm{Reedy}}%

\global\long\def\cart{\mathrm{cart}}%

\global\long\def\disj{\mathrm{disj}}%

\global\long\def\Top{\mathrm{Top}}%

\global\long\def\lax{\mathrm{lax}}%

\global\long\def\weq{\mathrm{weq}}%

\global\long\def\fib{\mathrm{fib}}%

\global\long\def\inert{\mathrm{inert}}%

\global\long\def\act{\mathrm{act}}%

\global\long\def\cof{\mathrm{cof}}%

\global\long\def\inj{\mathrm{inj}}%

\global\long\def\univ{\mathrm{univ}}%

\global\long\def\Ker{\opn{Ker}}%

\global\long\def\Coker{\opn{Coker}}%

\global\long\def\Im{\opn{Im}}%

\global\long\def\Coim{\opn{Im}}%

\global\long\def\coker{\opn{coker}}%

\global\long\def\im{\opn{\mathrm{im}}}%

\global\long\def\coim{\opn{coim}}%

\global\long\def\gn{\mathrm{gn}}%

\global\long\def\Mon{\opn{Mon}}%

\global\long\def\Un{\opn{Un}}%

\global\long\def\St{\opn{St}}%

\global\long\def\cun{\widetilde{\opn{Un}}}%

\global\long\def\cst{\widetilde{\opn{St}}}%

\global\long\def\Sym{\operatorname{Sym}}%

\global\long\def\CA{\operatorname{CAlg}}%

\global\long\def\Ind{\operatorname{Ind}}%

\global\long\def\rd{\mathrm{rd}}%

\global\long\def\xmono#1#2{\stackrel[#2]{#1}{\rightarrowtail}}%

\global\long\def\xepi#1#2{\stackrel[#2]{#1}{\twoheadrightarrow}}%

\global\long\def\adj{\stackrel[\longleftarrow]{\longrightarrow}{\bot}}%

\global\long\def\btimes{\boxtimes}%

\global\long\def\ps#1#2{\prescript{}{#1}{#2}}%

\global\long\def\ups#1#2{\prescript{#1}{}{#2}}%

\global\long\def\hofib{\mathrm{hofib}}%

\global\long\def\cofib{\mathrm{cofib}}%

\global\long\def\Vee{\bigvee}%

\global\long\def\w{\wedge}%

\global\long\def\t{\otimes}%

\global\long\def\bp{\boxplus}%

\global\long\def\rcone{\triangleright}%

\global\long\def\lcone{\triangleleft}%

\global\long\def\p{\prime}%

\global\long\def\pp{\prime\prime}%

\global\long\def\W{\overline{W}}%

\global\long\def\o#1{\overline{#1}}%

\global\long\def\fp{\overrightarrow{\times}}%

\title{Classification Diagrams of Marked Simplicial Sets}
\begin{abstract}
We prove that the classification diagram functor from the category
of marked simplicial sets to the category of bisimplicial sets carries
cartesian equivalences to Rezk equivalences. As a corollary, we obtain
Mazel-Gee's theorem on localizations of relative $\infty$-categories.
\end{abstract}

\author{Kensuke Arakawa}
\email{arakawa.kensuke.22c@st.kyoto-u.ac.jp}
\address{Department of Mathematics, Kyoto University, Kyoto, 606-8502, Japan}
\subjclass[2020]{18N60, 18A22, 55U35}

\maketitle
\tableofcontents{}

\section{Introduction}

\subsection{Localizations of $\infty$-Categories}

The process of freely inverting morphisms in a given category, known
as localization, is fundamental to category theory. Localizations
of categories can easily be generalized to the setting of $\infty$-categories\footnote{By $\infty$-categories, we mean quasi-categories of \cite{Joyal_qcat_Kan}.
See \cite{Landoo-cat} for a friendly introduction of $\infty$-categories.}.
\begin{defn}
\cite[Definition 2.4.2]{Landoo-cat}\label{def:loc} Let $\cal C$
and $\cal D$ be $\infty$-categories and let $S$ be a set of morphisms
of $\cal C$ containing all identity morphisms. A functor $L:\cal C\to\cal D$
of $\infty$-categories is said to \textbf{exhibit $\cal D$ as a
(Dwyer--Kan) localization of $\cal C$ with respect to $S$} if it
satisfies the following conditions:
\begin{itemize}
\item The functor $L$ carries every morphism in $S$ to an equivalence.
\item For every $\infty$-category $\cal E$, the functor
\[
\Fun\pr{\cal D,\cal E}\to\Fun^{S}\pr{\cal C,\cal E}
\]
is a categorical equivalence, where $\Fun^{S}\pr{\cal C,\cal E}$
denotes the full subcategory of $\Fun\pr{\cal C,\cal E}$ spanned
by the functors $\cal C\to\cal E$ carrying every morphism in $S$
to an equivalence.
\end{itemize}
If $L$ satisfies these conditions, we call $\cal D$ the\textbf{
localization of $\cal C$ with respect to $S$} and write $\cal D=\cal C[S^{-1}]$.
\end{defn}

It is often helpful to \textit{know} that a certain functor $\cal C\to\cal D$
of $\infty$-categories exhibits $\cal D$ as a localization of $\cal C$
with respect to a set $S$ of morphisms. For one thing, it helps advance
the understanding of $\cal D$, for it characterizes $\cal D$ by
a universal property. For another, it often allows us to sweep the
problem of coherency under the rug. Constructing a functor of the
form $\cal D\to\cal E$ can sometimes be hopelessly hard because of
the immense amount of data one must specify. But if $\cal C$ is sufficiently
nice, then it might be feasible to construct a functor $f:\cal C\to\cal E$.
And if $f$ carries each morphism in $S$ to an equivalence, we obtain
the desired functor $\cal D\to\cal E$ for free, using the universal
property of localizations. 

In contrast, it is often hard to \textit{show} that an $\infty$-category
is a localization of another. As such, it is of central concern in
modern homotopy theory to present complicated $\infty$-categories
as localizations of simpler $\infty$-categories; sometimes such a
presentation can even be an end in itself. See, for instance, \cite[Theorem 1.3.4.20]{HA},
\cite[$\S$2.4]{AF_FHTM}, and \cite{LST2022}.

\subsection{Mazel-Gee's Localization Theorem}

In the previous subsection, we saw that localizations of $\infty$-categories
are useful but are often inaccessible. To remedy this situation, Mazel-Gee
established a convenient criterion for localizations. 

To explain Mazel-Gee's work, we must recall the definition of another
model of $\pr{\infty,1}$-categories, namely, complete Segal spaces.
A \textbf{complete Segal space} is a bisimplicial set $X$ whose $n$th
column $X_{n,\ast}$ is a Kan complex modeling the space of $n$ composable
arrows of the $\pr{\infty,1}$-category $X$ presents. (See \cite[$\S$6]{Rezk01}
for a precise definition.) For example, if $\cal C$ is an $\infty$-category,
then the bisimplicial set $N\pr{\cal C}$ whose $n$th column is given
by the maximal sub Kan complex of $\Fun\pr{\Delta^{n},\cal C}$ is
a complete Segal space. Rezk, the inventor of complete Segal spaces,
constructed a model structure for complete Segal spaces on the category
$\BS$ of bisimplicail sets \cite[Theorem 7.2]{Rezk01}. We will denote
this model structure by $\BS_{\CSS}$ and call its weak equivalences
\textbf{Rezk equivalences}.

Now suppose we are given an $\infty$-category $\cal C$ and a subcategory
$\cal W\subset\cal C$. If we want to formally invert the morphisms
in $\cal W$, one thing we could try is to formally replace equivalences
with $\cal W$ in the definition of $N\pr{\cal C}$. In other words,
we consider the bisimplicial set $N\pr{\cal C,\cal W}$ whose $n$th
column is given by the fiber product
\[
\Fun\pr{\Delta^{n},\cal C}\times_{\cal C^{n+1}}\cal W^{n+1}.
\]
We call the bisimplicial set $N\pr{\cal C,\cal W}$ the \textbf{classification
diagram} of the pair $\pr{\cal C,\cal W}$. Of course, the classification
diagram is no longer a complete Segal space; its columns may not even
be Kan complexes, for $\cal W$ may contain non-equivalences. Nonetheless,
Mazel-Gee's theorem asserts that this (perhaps na\"ive) construction
does compute localizations\footnote{We should remark that a germ of Theorem \ref{thm:MGloc} was already
present in the paper \cite{Rezk01}, in which Rezk introduced complete
Segal spaces. See \cite[Theorem 8.3]{Rezk01}.}:
\begin{thm}
[Mazel-Gee's Localization Theorem {\cite[Theorem 3.8]{MR4045352}}]\label{thm:MGloc}Let
$\cal C$ and $\cal D$ be $\infty$-categories, let $\cal W\subset\cal C$
be a subcategory containing all equivalences, and let $f:\cal C\to\cal D$
be a functor which carries every morphism in $\cal W$ to an equivalence.
The following conditions are equivalent:
\begin{enumerate}
\item The map $N\pr{\cal C,\cal W}\to N\pr{\cal D}$ is a Rezk equivalence.
\item The functor $f$ exhibits $\cal D$ as a localization of $\cal C$
with respect to the morphisms in $\cal W$.
\end{enumerate}
\end{thm}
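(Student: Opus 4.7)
My plan is to deduce Theorem~\ref{thm:MGloc} from the paper's main theorem by recasting condition~(2) as a statement about the cartesian model structure on marked simplicial sets. Write $(\cal C, \cal W)$ for $\cal C$ viewed as a marked simplicial set with marked edges $\cal W$, and write $\cal D^{\natural}$ for $\cal D$ with its equivalences marked. The key preliminary observation is that condition~(2) is equivalent to the map
\[
(\cal C, \cal W) \longrightarrow \cal D^{\natural}
\]
being a cartesian equivalence. Indeed, under the cartesian model structure, the derived mapping space from $(\cal C, \cal W)$ into $\cal E^{\natural}$ is the underlying Kan complex of the functor $\infty$-category $\Fun^{\cal W}(\cal C, \cal E)$ of Definition~\ref{def:loc}, so condition~(2) is precisely the Yoneda-type formulation of being a cartesian equivalence between a cofibrant and a fibrant object.

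With this translation in hand, (2)~$\Rightarrow$~(1) is immediate from the main theorem: the classification diagram functor carries the cartesian equivalence $(\cal C, \cal W) \to \cal D^{\natural}$ to the Rezk equivalence $N(\cal C, \cal W) \to N(\cal D)$.

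For the converse (1)~$\Rightarrow$~(2), I would factor $f$ through a chosen model $L : \cal C \to \cal C[\cal W^{-1}]$ of the Dwyer--Kan localization, giving a factorization of marked simplicial sets
\[
(\cal C, \cal W) \xrightarrow{\;L\;} \cal C[\cal W^{-1}]^{\natural} \xrightarrow{\;g\;} \cal D^{\natural}.
\]
The left map is a cartesian equivalence by the universal property of the localization, so by the main theorem $N(L)$ is a Rezk equivalence. Together with hypothesis~(1) and two-out-of-three, this forces $N(g) : N(\cal C[\cal W^{-1}]) \to N(\cal D)$ to be a Rezk equivalence between complete Segal spaces. Since the Rezk nerve reflects equivalences on $\infty$-categories -- a consequence of the Joyal--Tierney Quillen equivalence between the Joyal model structure and $\BS_{\CSS}$ -- the functor $g$ must be a categorical equivalence, so $f = g \circ L$ exhibits $\cal D$ as the Dwyer--Kan localization of $\cal C$ at $\cal W$.

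The main obstacle is the first step: correctly identifying the universal property of Definition~\ref{def:loc} with the assertion that $(\cal C, \cal W) \to \cal D^{\natural}$ is a cartesian equivalence. This amounts to a careful analysis of derived mapping spaces in the cartesian model structure, combined with the fact that $\cal D^{\natural}$ is fibrant -- both standard, but requiring some bookkeeping. Once this translation is granted, the remainder of the argument is formal: one application of the main theorem in each direction, together with two-out-of-three and Joyal--Tierney to pass back from bisimplicial to marked-simplicial information.
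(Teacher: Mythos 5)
Your proposal is correct, and its core is the same as the paper's: identify condition (2) with the assertion that $\pr{\cal C,\cal W}\to\cal D^{\natural}$ is a cartesian equivalence (this is Proposition \ref{prop:intro_1}, and the paper's proof of the general Corollary \ref{cor:localization_theorem} consists of exactly this translation), and then apply Theorem \ref{thm:main}. The one place you diverge is the direction (1)$\Rightarrow$(2): Theorem \ref{thm:main} is stated as an equivalence, so the implication ``$N\pr f$ Rezk equivalence $\Rightarrow$ $f$ cartesian equivalence'' is already available and the paper simply uses it; your detour through a chosen localization model $L:\cal C\to\cal C[\cal W^{-1}]$, two-out-of-three, and Joyal--Tierney is therefore redundant, though valid --- in effect you are re-deriving, in this special case, the ``reflects'' half of the main theorem, which the paper obtains from the fact that $N$ is a right Quillen equivalence. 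If you do keep that detour, note two small points of care: the universal property only gives $g$ with $g\circ L\simeq f$ up to natural equivalence, so you should either take $L$ to be a cartesian trivial cofibration (fibrant replacement of $\pr{\cal C,\cal W}$), in which case $f$ extends strictly along $L$ since $\cal D^{\natural}$ is fibrant, or argue that naturally equivalent functors induce the same map in the homotopy category of $\BS_{\CSS}$; and the reflection of equivalences by the Rezk nerve can be obtained more directly from Theorem \ref{thm:relations} (since $N=\Unm\circ\pr{t^{+}}^{!}$ is a right Quillen equivalence, it reflects weak equivalences between fibrant objects) without passing through the unmarked Joyal--Tierney comparison.
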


What is great about Theorem \ref{thm:MGloc} is that it gives us a
sufficient condition for a functor to be a localization functor. For
example, it is known that column-wise weak homotopy equivalences and
row-wise categorical equivalences of bisimplicial sets are Rezk equivalences
(\cite[Theorem 7.2]{Rezk01}, \cite[Theorem 4.5]{JT07}). Therefore,
Theorem \ref{thm:MGloc} implies:
\begin{cor}
\label{cor:MG_loc}Let $\cal C$ and $\cal D$ be $\infty$-categories,
let $\cal W\subset\cal C$ be a subcategory containing all equivalences,
and let $f:\cal C\to\cal D$ be a functor which carries every morphism
in $\cal W$ to an equivalence. Suppose that the map $N\pr{\cal C,\cal W}\to N\pr{\cal D}$
is either a column-wise weak homotopy equivalence or a row-wise categorical
equivalence. Then $f$ exhibits $\cal D$ as a localization of $\cal C$
with respect to the morphisms in $\cal W$. 
\end{cor}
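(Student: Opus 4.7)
The plan is to deduce this directly from Theorem \ref{thm:MGloc}, which is in force here. By that theorem, condition (2) of Corollary \ref{cor:MG_loc} --- that $f$ exhibits $\cal D$ as a localization of $\cal C$ with respect to $\cal W$ --- is equivalent to the assertion that the induced map $N\pr{\cal C,\cal W}\to N\pr{\cal D}$ is a Rezk equivalence. So the entire task reduces to checking that, under either of the two hypotheses of the corollary, this map is a Rezk equivalence.

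For this, I would invoke two standard facts about the complete Segal space model structure $\BS_{\CSS}$. First, by \cite[Theorem 7.2]{Rezk01}, Rezk's model structure is obtained as a left Bousfield localization of the Reedy model structure on bisimplicial sets, whose weak equivalences are exactly the column-wise weak homotopy equivalences; hence every such equivalence is automatically a Rezk equivalence. Second, by \cite[Theorem 4.5]{JT07}, the same model structure $\BS_{\CSS}$ is also a left Bousfield localization of a model structure whose weak equivalences are precisely the row-wise categorical equivalences, so these too are Rezk equivalences. In both cases, one only needs the general principle that left Bousfield localization enlarges the class of weak equivalences.

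With these two facts in place, each of the two alternative hypotheses on $N\pr{\cal C,\cal W}\to N\pr{\cal D}$ directly yields a Rezk equivalence, and Theorem \ref{thm:MGloc} then delivers the desired localization conclusion. I do not anticipate any real obstacle here: the substantive content is all contained in Theorem \ref{thm:MGloc}, and the corollary is essentially a convenient repackaging of that result in the two cases that arise most frequently in practice.
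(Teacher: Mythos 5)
Your proposal is correct and follows essentially the same route as the paper: the paper likewise deduces the corollary from Theorem \ref{thm:MGloc} by observing that column-wise weak homotopy equivalences and row-wise categorical equivalences are Rezk equivalences, citing \cite[Theorem 7.2]{Rezk01} and \cite[Theorem 4.5]{JT07}.
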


\subsection{What This Paper is About}

The goal of this paper is to illuminate and generalize Mazel-Gee's
localization Theorem (Theorem \ref{thm:MGloc}) by using marked simplicial
sets.

Recall that a \textbf{marked simplicial set} is a pair $\pr{X,S}$,
where $X$ is a simplicial set and $S$ is a set of edges of $X$
containing all degenerate edges; a morphism of marked simplicial sets
$\pr{X,S}\to\pr{Y,T}$ is just a morphism of simplicial sets $X\to Y$
which carries $S$ into $T$. Here are some examples of marked simplicial
sets:
\begin{example}
\hfill
\begin{enumerate}
\item Let $\cal C$ be an $\infty$-category. Then the pair $\cal C^{\natural}=\pr{\cal C,\{\text{equivalences of }\cal C\}}$
is a marked simplicial set.
\item If $X$ is a simplicial set, then the pair $X^{\flat}=\pr{X,\{\text{degenerate edges of }X\}}$
is a marked simplicial set.
\item If $X$ is a simplicial set, then $X^{\sharp}=\pr{X,X_{1}}$ is a
marked simplicial set.
\end{enumerate}
\end{example}

There is a special class of morphisms of marked simplicial sets which
is closely related to localizations. A morphism of marked simplicial
sets $f:\pr{X,S}\to\pr{Y,T}$ is called a \textbf{cartesian equivalence}\footnote{The use of the adjective ``cartesian'' is explained by the fact
that cartesian equivalences were first considered in the study of
\textit{cartesian fibrations} of simplicial sets. See \cite[$\S$3.1]{HTT}
for more details.}\textbf{ }if for each $\infty$-category $\cal C$, the functor
\[
\Fun^{T}\pr{Y,\cal C}\to\Fun^{S}\pr{X,\cal C}
\]
is a categorical equivalence. Here $\Fun^{S}\pr{X,\cal C}$ denotes
the full subcategory of $\Fun\pr{X,\cal C}$ consisting of the diagrams
$X\to\cal C$ which carries every morphism in $S$ to an equivalence
of $\cal C$. Thus, intuitively, $f$ is a cartesian equivalence if
and only if it induces a categorical equivalence after localizing
$X$ and $Y$ with respect to $S$ and $T$. In particular, we can
reformulate the definition of localizations of $\infty$-categories
by using cartesian equivalences:
\begin{prop}
\label{prop:intro_1}Let $\cal C$ and $\cal D$ be $\infty$-categories,
let $S$ be a set of morphisms of $\cal C$ containing all identity
morphisms, and let $f:\cal C\to\cal D$ be a functor which carries
every morphism in $S$ to an equivalence. The following conditions
are equivalent:
\begin{enumerate}
\item The map $\pr{\cal C,S}\to\cal D^{\natural}$ is a cartesian equivalence.
\item The functor $f$ exhibits $\cal D$ as a localization of $\cal C$
with respect to $S$.
\end{enumerate}
\end{prop}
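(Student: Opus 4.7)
The plan is to observe that this proposition is essentially a matter of unwinding definitions, once one notes that every functor of $\infty$-categories automatically preserves equivalences.

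First I would expand condition (1). By the definition of cartesian equivalence given just above the proposition, the map $\pr{\cal C,S}\to\cal D^{\natural}$ is a cartesian equivalence if and only if, for every $\infty$-category $\cal E$, the restriction functor
\[
\Fun^{\Equiv\pr{\cal D}}\pr{\cal D,\cal E}\to\Fun^{S}\pr{\cal C,\cal E}
\]
is a categorical equivalence, where $\Equiv\pr{\cal D}$ denotes the set of equivalences of $\cal D$ (the marked edges of $\cal D^\natural$).

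The key observation is that any functor $g:\cal D\to\cal E$ between $\infty$-categories carries equivalences to equivalences; this is standard (equivalences are those edges whose image in the homotopy category is invertible, and any functor descends to the homotopy category). Consequently,
\[
\Fun^{\Equiv\pr{\cal D}}\pr{\cal D,\cal E}=\Fun\pr{\cal D,\cal E}
\]
as full subcategories of $\Fun\pr{\cal D,\cal E}$. Substituting this identification into the characterization above shows that condition (1) is equivalent to asking that $\Fun\pr{\cal D,\cal E}\to\Fun^{S}\pr{\cal C,\cal E}$ be a categorical equivalence for every $\infty$-category $\cal E$, which is precisely the second clause in Definition \ref{def:loc}. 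Since the first clause of Definition \ref{def:loc}—that $f$ sends $S$ into equivalences—is assumed in the hypothesis of the proposition, this yields condition (2).

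There is no real obstacle here; the only substantive ingredient is the remark that functors preserve equivalences, so I would state that as a one-line justification and otherwise let the proof be a direct comparison of the two universal properties.
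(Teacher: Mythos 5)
Your proof is correct and follows essentially the same route the paper takes: the statement is a direct unwinding of the definition of cartesian equivalence against Definition~\ref{def:loc}, with the only substantive input being that any functor of $\infty$-categories preserves equivalences, so that $\Fun^{\Equiv\pr{\cal D}}\pr{\cal D,\cal E}=\Fun\pr{\cal D,\cal E}$.
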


There is even a model structure for cartesian equivalences, called
the \textbf{cartesian model structure} \cite[$\S$3.1]{HTT}. In this
model structure, the cofibrations are the monomorphisms, the weak
equivalences are the cartesian equivalences, and the fibrant objects
are the marked simplicial sets of the form $\cal C^{\natural}$, where
$\cal C$ is an $\infty$-category. We denote this model structure
by $\SS_{{\rm cart}}^{+}$

There is a natural extension of the classification diagram construction
in the setting of marked simplicial sets. Define the \textbf{classification
diagram} functor $N:\SS^{+}\to\BS$ by setting
\[
N\pr{X,S}_{n,m}=\{\text{maps }\pr{\Delta^{n}}^{\flat}\times\pr{\Delta^{m}}^{\sharp}\to\pr{X,S}\text{ of marked simplicial sets}\}.
\]
If $\cal C$ is an $\infty$-category and $\cal W\subset\cal C$ is
its subcategory, then $N\pr{\cal C,\cal W_{1}}$ is nothing but the
classification diagram of the pair $\pr{\cal C,\cal W}$ defined in
the previous subsection. In light of this and Proposition \ref{prop:intro_1},
we can interpret Theorem \ref{thm:MGloc} as saying that:
\begin{quote}
The classification diagram functor $N:\SS_{{\rm cart}}^{+}\to\BS_{\CSS}$
preserves and reflects \textit{some} weak equivalences between possibly
\textit{non-fibrant} objects.
\end{quote}

This is curious. It is not hard to see that the functor $N:\SS_{{\rm cart}}^{+}\to\BS_{\CSS}$
is a right Quillen equivalence (Theorem \ref{thm:relations}), so
that it preserves and reflects weak equivalences of fibrant objects.
However, right Quillen equivalences often fail to preserve and reflect
weak equivalences of non-fibrant objects. And there is a good reason
for this: The whole point of using model categories is to consider
the homotopy theory of good (i.e., cofibrant-fibrant) objects, so
we should not expect right Quillen equivalences to keep track of weak
equivalences of non-fibrant objects. Nonetheless, Mazel-Gee's Theorem
says that the classification diagram functor does preserve and reflect
some weak equivalences. This naturally leads us to the following question:
\begin{qst}
\label{que:intro}\hfill
\begin{enumerate}
\item Which cartesian equivalences does the classification diagram functor
preserve? 
\item Are there non-cartesian equivalences that induce Rezk equivalences
between the classification diagrams?
\end{enumerate}
\end{qst}

The following Theorem, which is the main result of this paper, gives
a complete answer to Question \ref{que:intro}. 
\begin{thm}
[Theorem \ref{thm:main}]\label{thm:intro_2}Let $f:\pr{X,S}\to\pr{Y,T}$
be a morphism of marked simplicial sets. The following conditions
are equivalent:
\begin{enumerate}
\item The map $f$ is a cartesian equivalence.
\item The map $N\pr f$ is a Rezk equivalence.
\end{enumerate}
\end{thm}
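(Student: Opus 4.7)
The plan is to prove $(1)\Rightarrow(2)$ directly and then deduce $(2)\Rightarrow(1)$ from it using the Quillen equivalence of Theorem \ref{thm:relations} together with fibrant replacement. For the latter, assume $N(f)$ is a Rezk equivalence and apply functorial fibrant replacement in $\SS_{\cart}^{+}$ to obtain cartesian equivalences $\iota:(X,S)\to \cal C^{\natural}$ and $\jmath:(Y,T)\to \cal D^{\natural}$ together with an extension $\tilde f:\cal C^{\natural}\to \cal D^{\natural}$ satisfying $\tilde f\iota=\jmath f$. Once $(1)\Rightarrow(2)$ is known, $N(\iota)$ and $N(\jmath)$ are Rezk equivalences, so $N(\tilde f)$ is too by 2-out-of-3; the right Quillen equivalence property then forces $\tilde f$ to be a cartesian equivalence (its source and target are both fibrant), and one more 2-out-of-3 delivers the conclusion for $f$.

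For the main direction $(1)\Rightarrow(2)$, I would first reduce to the case of trivial cofibrations. Since $N$ is right Quillen it carries cartesian-trivial fibrations to Rezk-trivial fibrations, and every cartesian equivalence factors as a cartesian trivial cofibration followed by a cartesian trivial fibration; by 2-out-of-3, it suffices to show that $N$ sends every cartesian trivial cofibration to a Rezk equivalence. I would then fix a set $\cal W$ of generating trivial cofibrations for $\SS_{\cart}^{+}$ — for instance, a version of the marked-anodyne maps of \cite[\S 3.1]{HTT} — and argue in two steps: (a) verify directly that $N(w)$ is a Rezk equivalence for each $w\in \cal W$; and (b) extend to every monomorphic cartesian trivial cofibration by a saturation argument, using that $N$ preserves monomorphisms (being defined as a levelwise hom out of the small objects $(\Delta^{n})^{\flat}\times(\Delta^{m})^{\sharp}$) and that Rezk equivalences among monomorphisms are closed under pushout, transfinite composition, and retracts in $\BS_{\CSS}$.

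The principal obstacle is step (a): the direct combinatorial check on the generators, in particular on maps that encode invertibility of marked edges, whose images under $N$ are genuinely non-fibrant bisimplicial sets. For these I would lean on the recognition principles packaged in Corollary \ref{cor:MG_loc} — column-wise weak homotopy equivalence \cite[Theorem 7.2]{Rezk01} and row-wise categorical equivalence \cite[Theorem 4.5]{JT07} — to detect Rezk equivalences without having to fibrantly replace. A secondary technical issue is the saturation step (b): although $N$ is a right adjoint and so does not preserve arbitrary colimits, its explicit levelwise formula lets one check by hand that it preserves pushouts of monomorphisms along monomorphisms and transfinite compositions of monomorphisms, which together with the closure properties of the left Bousfield localization $\BS_{\CSS}$ should suffice to propagate the Rezk-equivalence property from $\cal W$ to all cartesian trivial cofibrations.
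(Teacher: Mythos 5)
Your reduction of (2)$\Rightarrow$(1) to (1)$\Rightarrow$(2) via fibrant replacement and reflection on fibrant objects is correct and is essentially what the paper does. The problems are in your plan for (1)$\Rightarrow$(2). First, the step you label (b) rests on the claim that $N$ preserves pushouts of monomorphisms along monomorphisms, and this is false: $N$ is a right adjoint with $N(\overline{X})_{n,m}=\Hom((\Delta^{n})^{\flat}\times(\Delta^{m})^{\sharp},\overline{X})$, and a map out of $\Delta^{n}\times\Delta^{m}$ into a pushout need not factor through either leg. Concretely, let $\overline{A}=(\Delta^{1})^{\sharp}$ be glued into two $2$-simplices $\overline{B}=(\Delta^{2})^{\sharp}$ and $\overline{C}=(\Delta^{2})^{\sharp}$ along their long edges; the square $\Delta^{1}\times\Delta^{1}$ maps to $\overline{B}\amalg_{\overline{A}}\overline{C}$ by sending one triangle into $\overline{B}$ and the other into $\overline{C}$, matching along the diagonal, and this $(1,1)$-bisimplex of $N(\overline{B}\amalg_{\overline{A}}\overline{C})$ is not in the image of $N(\overline{B})\amalg_{N(\overline{A})}N(\overline{C})$. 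So the saturation argument propagating Rezk equivalences from generators to all trivial cofibrations does not go through as written. Second, step (a) has no explicit set to be checked on: the marked anodyne maps of \cite[\S 3.1]{HTT} are cartesian trivial cofibrations, but they are not known (and not claimed in \cite{HTT}) to generate all trivial cofibrations of $\SS^{+}_{\cart}$ --- they only detect fibrations with fibrant target --- while the actual generating trivial cofibrations are provided purely abstractly by combinatoriality. Moreover, even for the marked anodyne generators (notably $K^{\flat}\to K^{\sharp}$ with $K$ Kan and the maps of type $(\Lambda_{1}^{2})^{\sharp}\cup(\Delta^{2})^{\flat}\to(\Delta^{2})^{\sharp}$), verifying that $N$ yields a column-wise or row-wise equivalence is exactly the substantive content, and you acknowledge it as ``the principal obstacle'' without carrying it out. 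A repair along your lines would replace the trivial-cofibration/trivial-fibration factorization by fibrant replacement along marked anodyne maps plus Ken Brown's lemma (eliminating the need for generating trivial cofibrations and for the pushout claim), but it would still leave the generator check to be done.

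For contrast, the paper takes a different route that avoids both issues: it constructs the marked complete Segal space model structure $\BS_{\CSS}^{+}$, shows $(t^{+})^{!}$ is right Quillen there and that $\BS_{\CSS}^{+}$ is a Bousfield localization of the Reedy structure; then (1) implies that $(t^{+})^{!}(f)$ is already a Reedy (row-wise) equivalence, because each row map $\overline{X}^{(\Delta^{m})^{\sharp}}\to\overline{Y}^{(\Delta^{m})^{\sharp}}$ is a cartesian equivalence by a simplicial-homotopy argument (Corollary \ref{cor:diagonal_weq}); and the passage from the marked object $(t^{+})^{!}(\overline{A})$ to the unmarked $N(\overline{A})$ is the key observation that any bisimplicial map $N(\overline{A})\to\cal C$ into a complete Segal space automatically respects the markings (the degenerate-square trick in the proof of Theorem \ref{thm:main}), so that $\Map((t^{+})^{!}(\overline{A}),\cal C^{\natural})=\Map(N(\overline{A}),\cal C)$. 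As it stands, your proposal is a sketch whose two load-bearing steps are, respectively, unproved and incorrect.
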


To understand how delicate Theorem \ref{thm:intro_2} is, it is instructive
to consider the forgetful functor $\SS^{+}\to\SS$. Like the classification
diagram functor, this functor is a right Quillen equivalence (with
respect to the cartesian model structure and the model structure for
quasi-categories \cite[$\S$2.2.5]{HTT}, also known as the Joyal model
structure) and takes values in a category which does not remember
markings. However, there are plenty of cartesian equivalences whose
underlying map of simplicial sets is not a weak categorical equivalence;
the inclusion $\pr{\Lambda_{2}^{2}}^{\flat}\cup\pr{\Delta^{\{1,2\}}}^{\sharp}\to\pr{\Delta^{2}}^{\flat}\cup\pr{\Delta^{\{1,2\}}}^{\sharp}$
is one such example. So what Theorem \ref{thm:intro_2} says is that,
somehow, the classification diagram functor magically remembers all
information about markings while disposing of markings.

It is also worth mentioning that our proof of Theorem \ref{thm:MGloc}
is rather short and elementary. When Mazel-Gee proved Theorem \ref{thm:MGloc}
in \cite{MR4045352}, he remarked that ``our proof...is (perhaps
unexpectedly, and certainly unsatisfyingly) complicated,'' leaving
the possibility of a more simple proof. This is understandable, for
his argument relies on a lemma whose proof lasts for pages. We hope
that our proof of Theorem \ref{thm:intro_2} (which is strictly stronger
than Theorem \ref{thm:MGloc}) will be a good alternative to Mazel-Gee's
proof of Theorem \ref{thm:MGloc}.

\subsection{Outline of the Paper}

In Section \ref{sec:mCSS}, we construct a model structure of \textit{marked}
complete Segal spaces, which is an analog of the cartesian model structure
on the category of \textit{marked bisimplicial sets}. The appearance
of marking comes as no surprise, for localizations of $\infty$-categories
are essentially fibrant replacements in the category of marked simplicial
sets. In Section \ref{sec:relations}, we relate the model structure
of marked complete Segal spaces with various other model structures.
Based on the contents of these sections, we prove our main result
(Theorem \ref{thm:main}) in Section \ref{sec:localization_thm}. 

Readers only interested in Theorem \ref{thm:intro_2} may skip most
of Sections \ref{sec:mCSS} and \ref{sec:relations}, but make sure
to understand the main results of these sections, which are Theorem
\ref{thm:mCSS_model_structure} and Theorem \ref{thm:relations}.

\subsection{A Remark on Marked Complete Segal Spaces}

The existence of the model structure for marked complete Segal spaces
was first announced by Rasekh in \cite[Theorem 2.25]{Rasekh21}. However,
his proof contains an error and only produces a model structure with
non-cofibrant objects, contrary to what he claims. We resolve this
issue in Section \ref{sec:mCSS}, where we construct the model structure
by a method different from Rasekh's.

\subsection*{Acknowledgment}

I appreciate David Ayala for informing me of the result of Mazel-Gee,
and Hiro Lee Tanaka for introducing David Ayala to me. I am also indebted
to Daisuke Kishimoto and Mitsunobu Tsutaya for their constant support
and encouragement. 

\subsection*{Notation and Terminology}

In this paper, we adopt the following conventions for notation and
terminology.
\begin{itemize}
\item The symbols $\SS$, $\SS^{+}$, $\BS$ denote the categories of simplicial
sets, marked simplicial sets, and bisimplicial sets, respectively.
When categories are equipped with model structures, we indicate it
by writing the name of the model structure in the subscript. Examples
include the \textbf{Joyal model structure} \cite[$\S$2.2.5]{HTT},
denoted by $\SS_{{\rm Joyal}}$; the \textbf{cartesian model structure}
\cite[$\S$3.1.3]{HTT}, denoted by $\SS_{{\rm cart}}^{+}$; and the
\textbf{model structure for complete Segal spaces} \cite[Theorem 7.2]{Rezk01},
denoted by $\BS_{\CSS}$. 
\item If $\cal M$ is a cartesian closed category, a model structure on
$\cal M$ is said to be \textbf{cartesian} if its model structure
is monoidal \cite[Definition A.3.1.2]{HTT} with respect to binary
product.
\item We will refer to the fibrant objects of the Joyal model structure
as\textbf{ $\infty$-categories}. Usually, $\infty$-categories are
denoted by a calligraphic letter, such as $\cal C$. If $\cal C$
is an $\infty$-category, its \textbf{core} $\cal C^{\simeq}\subset\cal C$
is the largest Kan complex contained in $\cal C$.
\item Like $\infty$-categories, complete Segal spaces will often be denoted
by calligraphic letters.
\item The symbol $J$ denotes the nerve of the groupoid with two objects
$0$ and $1$ and exactly one morphism between any two objects.
\item A morphism of simplicial sets is called a \textbf{trivial fibration}
if it has the right lifting property for all monomorphisms of simplicial
sets.
\item A marked simplicial set is usually denoted by an alphabet with a line
above it, such as $\overline{X}$. Its underlying simplicial set is
typically denoted by removing the line. For example, if $\overline{X}$
is a marked simplicial set, its underlying simplicial set is $X$.
\item A class of morphisms in a category with small colimits is said to
be\textbf{ saturated} if it is stable under pushouts, transfinite
compositions, and retracts.
\item Other notations and terminology follow \cite{HTT}. Some non-standard
notation and terminology will be introduced where appropriate.
\end{itemize}

\section{\label{sec:mCSS}A Model Structure for Marked Complete Segal Spaces}

The subject of this section is marked bisimplicial sets. Marked bisimplicial
sets are to complete Segal spaces what marked simplicial sets are
to $\infty$-categories, and they were first introduced by Rasekh
in \cite{Rasekh21}. The goal of this section is to construct a model
structure on the category of marked bisimplicial sets whose fibrant
objects are the complete Segal spaces whose equivalences are marked.

We start by recalling the definition of marked bisimplicial sets.

\begin{defn}
\cite[Definition 2.2]{Rasekh21} A \textbf{marked bisimplicial set}
is a pair $\pr{X,S}$, where $X$ is a bisimplicial set and $S\subset X_{1}=X_{1,\ast}$
is a simplicial subset of the first column of $X$ which contains
the image of the map $X_{0}\to X_{1}$. A \textbf{morphism} of marked
bisimplicial sets $\pr{X,S}\to\pr{Y,T}$ is a morphism of bisimplicial
sets $X\to Y$ which carries $S$ into $T$. Marked bisimplicial sets
and their morphisms form a category, which we denote by $\BS^{+}$. 
\end{defn}

\begin{notation}
We define functors $\pr -^{\flat},\pr -^{\sharp}:\BS\to\BS^{+}$ by
$X^{\flat}=\pr{X,X_{0}}$, $X^{\sharp}=\pr{X,X_{1}}$. We also define
a functor $\Unm:\BS^{+}\to\BS$ by $\Unm\pr{X,S}=X$. There is an
adjunction $\pr -^{\flat}\dashv\Unm\dashv\pr -^{\sharp}$.
\end{notation}

Just like we did so for marked simplicial sets, we will typically
denote a marked simplicial set by putting an overline above its underlying
bisimplicial set.
\begin{example}
Recall that if $X$ and $Y$ are simplicial sets, their \textbf{box
product} $X\btimes Y$ is the bisimplicial set given by $\pr{X\btimes Y}_{m,n}=X_{m}\times Y_{n}$.
There is an analog of this construction in the marked setting: If
$\pr{X,S}$ is a marked simplicial set and $Y$ is a simplicial set,
we define their \textbf{box product} $\pr{X,S}\btimes Y$ by 
\[
\pr{X,S}\btimes Y=\pr{X\btimes Y,S\times Y}.
\]
Note that box products commute with flat and sharp, in the sense that
if $X$ and $Y$ are simplicial sets, then $X^{\flat}\btimes Y=\pr{X\btimes Y}^{\flat}$
and $X^{\sharp}\btimes Y=\pr{X\btimes Y}^{\sharp}$.
\end{example}

\begin{example}
If $\cal C$ is a complete Segal space, we let $\cal C^{\natural}$
denote the marked bisimplicial set $\pr{\cal C,\cal C_{\hoeq}}$,
where $\cal C_{\hoeq}\subset\cal C_{1}$ is the full simplicial subset
spanned by the homotopy equivalences of $\cal C$ \cite[$\S$5.7]{Rezk01}. 
\end{example}

Recall that the category $\BS$ of bisimplicial sets admits a simplicial
enrichment, given by 
\[
\Map\pr{X,Y}=\BS\pr{\pr{\Delta^{0}\btimes\Delta^{\bullet}}\times X,Y}.
\]
The category $\BS^{+}$ admits a similar enrichment:
\begin{defn}
\label{def:BS^+_simplicial}We will regard $\BS^{+}$ as a simplicial
category as follows: If $\overline{X},\overline{Y}\in\SS^{+}$, then
the hom-simplicial set is given by
\[
\Map\pr{\overline{X},\overline{Y}}=\BS^{+}\pr{\pr{\pr{\Delta^{0}}^{\flat}\btimes\Delta^{\bullet}}\times\overline{X},\overline{Y}}.
\]
\end{defn}

\begin{rem}
The simplicial category $\BS^{+}$ is tensored and cotensored: If
$K$ is a simplicial set and $\overline{X}$ is a marked bisimplicial
set, their tensor $K\otimes\pr{X,A}$ is given by $\pr{\pr{\Delta^{0}}^{\flat}\boxtimes K}\times\overline{X}$
and their cotensor $\overline{X}^{K}$ is given by $\overline{X}^{\pr{\Delta^{0}}^{\flat}\btimes K}$.
\end{rem}

\begin{rem}
\label{rem:map_sat}Let $\cal C$ be a complete Segal space and let
$\overline{X}$ be a marked simplicial set. Then $\Map\pr{\overline{X},\cal C^{\natural}}\subset\Map\pr{X,\cal C}$
is the union of components corresponding to the maps $\overline{X}\to\cal C^{\natural}$
of marked bisimplicial sets. This is because $\cal C_{\hoeq}\subset\cal C_{1}$
is itself a union of components. In particular, every monomorphism
$\overline{X}\to\overline{Y}$ of marked bisimplicial sets induces
a Kan fibration 
\[
\Map\pr{\overline{Y},\cal C^{\natural}}\to\Map\pr{\overline{X},\cal C^{\natural}}.
\]
\end{rem}

\begin{defn}
A morphism $\overline{X}\to\overline{Y}$ of marked bisimplicial sets
is called a \textbf{marked equivalence} if for every complete Segal
space $\cal C$, the map
\[
\Map\pr{\overline{Y},\cal C^{\natural}}\to\Map\pr{\overline{X},\cal C^{\natural}}
\]
is a weak homotopy equivalence.
\end{defn}

We can now state the main Theorem of this section.
\begin{thm}
\label{thm:mCSS_model_structure}There is a combinatorial, simplicial,
cartesian model structure on $\BS^{+}$ which may be described as
follows:
\begin{enumerate}
\item The cofibrations are the monomorphisms.
\item The fibrat objects are the marked bisimplicial sets of the form $\cal C^{\natural}$,
where $\cal C$ is a complete Segal space.
\item Weak equivalences are the marked equivalences of marked bisimplicial
sets.
\item If $f:\cal C\to\cal D$ is a map of complete Segal spaces, then the
induced map $\cal C^{\natural}\to\cal D^{\natural}$ is a fibration
of $\BS_{\CSS}^{+}$ if and only if $f$ is a fibration of $\BS_{\CSS}$.
\item If $f:\cal C\to\cal D$ is a map of complete Segal spaces, then the
induced map $\cal C^{\natural}\to\cal D^{\natural}$ is a weak equivalence
of $\BS_{\CSS}^{+}$ if and only if $f$ induces homotopy equivalences
between the columns of $\cal C$ and $\cal D$.
\end{enumerate}
\end{thm}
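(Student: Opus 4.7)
The strategy is to apply J.\ Smith's recognition theorem for combinatorial model categories \cite[Proposition A.2.6.13]{HTT}, following the template of Lurie's construction of the cartesian model structure on $\SS^{+}$ in \cite[\S 3.1]{HTT}. I declare the cofibrations to be the monomorphisms, which are generated as a weakly saturated class by the set $I$ consisting of the bi-cellular boundary inclusions $\partial\pr{\Delta^{m}\btimes\Delta^{n}}^{\flat}\hookrightarrow\pr{\Delta^{m}\btimes\Delta^{n}}^{\flat}$ together with the marking-enhancement $\pr{\Delta^{0}\btimes\Delta^{1}}^{\flat}\to\pr{\Delta^{0}\btimes\Delta^{1}}^{\sharp}$; and I declare the weak equivalences to be the marked equivalences.

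Smith's theorem then reduces the construction to three verifications: (i) the marked equivalences satisfy 2-out-of-3, are closed under retracts, and form an accessible class; (ii) the class of cofibrations that are marked equivalences is weakly saturated and admits a small generating set $J$; (iii) every map with RLP against $I$ is a marked equivalence. Both (i) and (iii) are routine. For (i), the 2-out-of-3 property and retract-closure are inherited from weak homotopy equivalences of mapping spaces, while accessibility follows by detecting marked equivalences against a small generating family of CSS-fibrant objects $\cal C^{\natural}$, mirroring the analogous argument of \cite[\S 3.1.3]{HTT}. For (iii), a map with RLP against $I$ is in particular a trivial fibration of underlying bisimplicial sets, and lifting against $\pr{\Delta^{0}\btimes\Delta^{1}}^{\flat}\to\pr{\Delta^{0}\btimes\Delta^{1}}^{\sharp}$ forces an edge of the source to be marked if and only if its image is marked in the target; combined with Remark \ref{rem:map_sat}, this shows that $\Map\pr{-,\cal C^{\natural}}$ sends the map to a trivial Kan fibration for every complete Segal space $\cal C$.

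The main obstacle is the construction of the generating set $J$ in (ii) in such a way that a map has RLP against $J$ precisely when it is of the form $\cal C^{\natural}\to\cal D^{\natural}$ for a fibration $\cal C\to\cal D$ in $\BS_{\CSS}$. I will take $J$ to be the union of three families: (a) the generating trivial cofibrations of Rezk's model structure on $\BS$, each regarded as a morphism of flatly marked bisimplicial sets, which will ensure that the underlying bisimplicial set of every fibrant object is a complete Segal space; (b) pushout-products of members of $I$ with $\emptyset\to\pr{\Delta^{0}\btimes J}^{\sharp}$, which will force every homotopy equivalence of a fibrant object to be marked; and (c) a family of the form $\pr{\Delta^{0}\btimes J}^{\flat}\cup\pr{\Delta^{0}\btimes\partial\Delta^{1}}^{\sharp}\to\pr{\Delta^{0}\btimes J}^{\sharp}$ together with higher-dimensional analogs, which will conversely force marked edges of a fibrant object to be homotopy equivalences.

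Verifying that the maps with RLP against $J$ coincide with the class of fibrations of the claimed form is the computational heart of the argument, and it is at this step that I expect Rasekh's original construction in \cite{Rasekh21} to have gone astray. The verification proceeds by an iterated analysis of lifting problems, combining standard arguments for complete Segal spaces with the marking-specific arguments familiar from \cite[\S 3.1.3]{HTT}. Once this characterization is established, assertions (2), (4), and (5) of the theorem are immediate, while the cartesian and simplicial enrichments follow from the usual pushout-product checks against the generators in $I$ and $J$.
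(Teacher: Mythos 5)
Your high-level template agrees with the paper's: take the cofibrations to be the monomorphisms, define the weak equivalences as the marked equivalences (detected by $\Map(-,\cal C^{\natural})$ for complete Segal spaces $\cal C$), and invoke Lurie's recognition criterion for combinatorial model structures. But the points you label ``routine'' or defer are exactly where the content lies, and as written there are genuine gaps. First, your generating set of cofibrations is insufficient: since a marking is a simplicial subset of the first column $X_{1,\ast}$, one must be able to adjoin marked simplices in every simplicial degree, which is why the paper uses the whole family $(\Delta^1)^{\flat}\btimes\Delta^n\subset(\Delta^1)^{\sharp}\btimes\Delta^n$, $n\geq0$; your single generator $(\Delta^0\btimes\Delta^1)^{\flat}\to(\Delta^0\btimes\Delta^1)^{\sharp}$ is, under the paper's conventions, an isomorphism (every horizontal edge of $\Delta^0\btimes\Delta^1$ is degenerate, so flat and sharp coincide), and even after transposing the factors it only adds $0$-dimensional markings, so the monomorphisms are not generated. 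Second, accessibility of the class of marked equivalences is not ``routine'': these are defined by testing against a proper class of objects $\cal C^{\natural}$, and the paper's proof of this point is precisely the purpose of the marked bianodyne extensions, which yield a replacement functor $T$ commuting with filtered colimits and reduce detection of weak equivalences to the combinatorial model structure $\BS_{\CSS}$ (Propositions \ref{prop:mbe_ext}, \ref{prop:mbe->me}, \ref{prop:marked_eq_fib}); your appeal to ``a small generating family of CSS-fibrant objects'' is an assertion, not an argument.

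Third, and most seriously, your plan rests on choosing a set $J$ and decreeing that the $J$-injective maps are the fibrations. Within Smith's theorem you do not get to do this: the fibrations are already determined by the cofibrations and weak equivalences, so after the model structure exists you must still \emph{prove} that the fibrant objects are exactly the $\cal C^{\natural}$ and that $\cal C^{\natural}\to\cal D^{\natural}$ is a fibration iff $\cal C\to\cal D$ is a CSS fibration --- this is what the paper does in Propositions \ref{prop:mCSS_fibrant}, \ref{prop:bs_vs_bs^+} and \ref{prop:mCSS_fibration}, and it is the step you explicitly leave unverified (your stated characterization is also false as written, since, e.g., any map with the right lifting property against all monomorphisms lifts against $J$ regardless of fibrancy of its source and target). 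Finally, the cartesian property is not ``the usual pushout-product check against generators'': the paper's proof of Proposition \ref{prop:bss^+_cartesian} requires knowing the fibrations between fibrant objects, the skeletal-induction result that equivalences in $\cal C^{X}$ are detected pointwise (Proposition \ref{prop:pointwise_equivalence}), the identification $(\cal C^{\natural})^{\overline{X}}=\cal D^{\natural}$ (Proposition \ref{prop:exponential_fibrant}), and a separate marking argument in the low-dimensional lifting case; none of this is available in your outline. In short, the skeleton is the right one, but the two difficulties that make the theorem nontrivial --- accessibility of the weak equivalences and the identification of fibrant objects and fibrations (hence parts (2), (4), (5) and cartesianness) --- are left as placeholders, and the proposed generators must be corrected before even the cofibration half of the recognition theorem applies.
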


The rest of this section is devoted to the proof of Theorem \ref{thm:mCSS_model_structure}.
We begin by defining a distinguished class of morphisms which will
be contained in the class of trivial cofibrations.
\begin{defn}
\label{def:mbe}The class of\textbf{ marked bianodyne extensions}
is the smallest saturated class of morphisms of marked bisimplicial
sets which contains the following morphisms:

\begin{enumerate}[label=(\Alph*)]

\item The inclusion $\pr{\partial\Delta^{n}\btimes\Delta^{m}}^{\flat}\cup\pr{\Delta^{n}\btimes\Lambda_{k}^{m}}^{\flat}\subset\pr{\Delta^{n}\btimes\Delta^{m}}^{\flat}$
for every $n\geq0$, $m\geq1$, and $0\leq k\leq m$.

\item The inclusion $\pr{\Lambda_{k}^{n}\btimes\Delta^{m}}^{\flat}\cup\pr{\Delta^{n}\btimes\partial\Delta^{m}}^{\flat}\subset\pr{\Delta^{n}\btimes\Delta^{m}}^{\flat}$
for every $0<k<n$ and $m\geq0$.

\item The inclusion $\pr{\{1\}\btimes\Delta^{m}}^{\flat}\cup\pr{J\btimes\partial\Delta^{m}}^{\flat}\subset\pr{J\btimes\Delta^{m}}^{\flat}$
for every $m\geq0$.

\item The inclusion $J^{\flat}\btimes\Delta^{m}\subset J^{\sharp}\btimes\Delta^{m}$
for every $m\geq0$.

\item The inclusion $\pr{\Delta^{1}}^{\sharp}\btimes\Delta^{m}\subset J^{\sharp}\btimes\Delta^{m}$
for every $m\ge0$.

\end{enumerate}
\end{defn}

We now look at basic properties of marked bianodyne extensions.
\begin{prop}
\label{prop:mbe_ext}Let $\overline{X}=\pr{X,S}$ be a marked bisimplicial
set. The following conditions are equivalent:
\begin{enumerate}
\item The marked bisimplicial set $\overline{X}$ has the right lifting
property for the marked bianodyne extensions.
\item The bisimplicial set $X$ is a complete Segal space, and $S$ is equal
to the sub Kan complex $X_{\hoeq}\subset X_{1}$ of homotopy equivalences.
\end{enumerate}
\end{prop}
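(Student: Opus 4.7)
The strategy is to match each of the five families (A)--(E) in Definition \ref{def:mbe} with a specific piece of the characterization in (2). Class (A) is the pushout-product of $\partial\Delta^n \subset \Delta^n$ with the horn $\Lambda^m_k \subset \Delta^m$, and its RLP is equivalent to the matching maps of $X$ (viewed as a simplicial object in $\SS$) being Kan fibrations, i.e., to $X$ being Reedy fibrant. Class (B) is the pushout-product of an inner horn $\Lambda^n_k \subset \Delta^n$ with a boundary $\partial\Delta^m \subset \Delta^m$, and under Reedy fibrancy its RLP is equivalent to the Segal condition on $X$. Class (C) is the pushout-product of $\{1\} \subset J$ with $\partial\Delta^m \subset \Delta^m$, encoding completeness. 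Together (A), (B), and (C) will characterize $X$ as a complete Segal space, while the remaining classes (D) and (E) will bracket the marking between two inclusions and force $S = X_{\hoeq}$.

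For $(2) \Rightarrow (1)$: given that $X$ is a complete Segal space and $S = X_{\hoeq}$, the RLP for (A)--(C) follows from the standard fact that fibrant objects of $\BS_{\CSS}$ are local with respect to the generating trivial cofibrations underlying these classes. For (D), a map $J^\flat \btimes \Delta^m \to (X,S)$ corresponds to an $m$-parameter family of $J$-diagrams in $X$; the edges of such diagrams in column $1$ are homotopy equivalences and hence already lie in $X_{\hoeq} = S$, giving the desired lift to $J^\sharp \btimes \Delta^m$. For (E), a map $(\Delta^1)^\sharp \btimes \Delta^m \to (X,S)$ picks out an $m$-parameter family of marked edges, which are homotopy equivalences, and these extend to $J$-diagrams by the definition of $X_{\hoeq}$ together with completeness.

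For $(1) \Rightarrow (2)$, the implications are read in reverse: (A) gives Reedy fibrancy; (B) combined with (A) gives the Segal condition; (C) gives completeness; so $X$ is a complete Segal space. Then (D) forces $X_{\hoeq} \subseteq S$ because every edge appearing in a $J$-diagram must lie in $S$, and (E) forces $S \subseteq X_{\hoeq}$ because every marked edge extends to a $J$-diagram and is therefore a homotopy equivalence. The main obstacle is the careful execution of step (B): the inner-horn RLP on the bisimplicial side only translates into the Segal condition once $X$ is known to be Reedy fibrant, so (A) and (B) must be handled in tandem, and one must invoke the standard equivalence (valid under Reedy fibrancy) between locality against spine inclusions and locality against the pushout-products of inner horns with boundary inclusions.
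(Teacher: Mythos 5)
Your proposal is correct and follows essentially the same route as the paper: classes (A)--(C) are matched with the Joyal--Tierney characterization of complete Segal spaces, and (D)--(E) pin down $S=X_{\hoeq}$ via the fact that a simplex $\Delta^{m}\to X_{1}$ lands in $X_{\hoeq}$ exactly when its adjoint $\Delta^{1}\btimes\Delta^{m}\to X$ extends over $J\btimes\Delta^{m}$ (Rezk's Theorem 6.2). The only difference is cosmetic: where you spell out the Reedy/Segal/completeness translation and appeal somewhat loosely to ``the definition of $X_{\hoeq}$ together with completeness,'' the paper simply cites the relevant statements of Joyal--Tierney and Rezk for both steps.
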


\begin{proof}
The marked simplicial set $\overline{X}$ has the extension property
for the inclusions of types (A), (B), and (C) of Definition \ref{def:mbe}
if and only if $X$ is a complete Segal space. (See \cite[Propositions 2.5 and 3.4, Lemma 4.2]{JT07}.)
So conditions (1) and (2) both imply that $X$ is a complete Segal
space. We may therefore assume that $X$ is a complete Segal space.
In this case, the claim is that $\overline{X}$ has the extension
property for inclusions of types (D) and (E) of Definition \ref{def:mbe}
if and only $S=X_{\hoeq}$. To prove this, recall that the map
\[
\Map\pr{J\btimes\Delta^{0},X}\to\Map\pr{\Delta^{1}\btimes\Delta^{0},X}\cong X_{1}
\]
induces a trivial fibration $\Map\pr{J\btimes\Delta^{0},X}\to X_{\hoeq}$
(\cite[Theorem 6.2]{Rezk01}). It follows that a map $\Delta^{m}\to X_{1}$
of simplicial sets factors through $X_{\hoeq}$ if and only if its
adjoint $\Delta^{1}\btimes\Delta^{m}\to X$ extends to $J\btimes\Delta^{m}$.
(Said differently, $X_{\hoeq,m}$ is precisely the equivalences of
the $\infty$-category $X_{\ast,m}$.) Therefore, $\overline{X}$
has the extension property for inclusions of type (D) if and only
if $X_{\hoeq}\subset S$. Likewise, $\overline{X}$ has the extension
property for inclusions of type (E) if and only if $S\subset X_{\hoeq}$.
The claim follows.
\end{proof}
\begin{prop}
\label{prop:mbe->me}Every marked bianodyne extension is a marked
equivalence.
\end{prop}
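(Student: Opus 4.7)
The plan is to reduce to the five generating classes (A)--(E) of marked bianodyne extensions by a saturation argument, then verify each directly, with case (E) being the main technical step.

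For the saturation, observe that for any monomorphism $f : \overline{X} \to \overline{Y}$ of marked bisimplicial sets and any complete Segal space $\cal C$, the map $\Map\pr{\overline{Y}, \cal C^{\natural}} \to \Map\pr{\overline{X}, \cal C^{\natural}}$ is a Kan fibration between Kan complexes, by Remark~\ref{rem:map_sat} together with the fact that $\cal C$ is fibrant in the simplicial model structure $\BS_{\CSS}$. Such a map is a weak equivalence if and only if it is a trivial Kan fibration, so the class of monomorphisms that are marked equivalences coincides with the class of monomorphisms $f$ for which $\Map\pr{f, \cal C^{\natural}}$ is a trivial Kan fibration for every $\cal C$. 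Since $\Map\pr{-, \cal C^{\natural}}$ sends colimits to limits, and trivial Kan fibrations are stable under pullback, limits of towers, and retracts, this class is closed under pushout, transfinite composition, and retracts. It therefore suffices to check the five generators.

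For types (A), (B), and (C), both endpoints are flat, so $\Map\pr{Z^{\flat}, \cal C^{\natural}}$ reduces to the Rezk mapping space $\Map\pr{Z, \cal C}$. The underlying bisimplicial inclusions are the generating trivial cofibrations encoding, respectively, Reedy fibrancy of the columns, the Segal condition, and the completeness condition; these are precisely the extensions quoted in the proof of Proposition~\ref{prop:mbe_ext} from \cite[Propositions~2.5 and 3.4, Lemma 4.2]{JT07}. Hence $\Map\pr{f, \cal C^{\natural}}$ is a trivial Kan fibration. For type (D), only the marking changes: every element of $J_{1} \times \Delta^{m}_{k}$ can be realized as a composition $\Delta^{1} \btimes \Delta^{k} \to J \btimes \Delta^{k} \to J \btimes \Delta^{m}$, so its image in $\cal C_{1, k}$ under any map $J \btimes \Delta^{m} \to \cal C$ factors through $\BS\pr{J \btimes \Delta^{k}, \cal C} \to \cal C_{1, k}$, and Rezk's Theorem~6.2 (in the form recalled in the proof of Proposition~\ref{prop:mbe_ext}) shows the latter lands in $\cal C_{\hoeq, k}$. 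Thus the sharp marking is automatic and the induced map of $\Map$-spaces is an equality.

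Case (E) is the hard part, and is where Rezk's Theorem~6.2 enters essentially. Writing $\cal C^{J}$ for the simplicial set $[k] \mapsto \BS\pr{J \btimes \Delta^{k}, \cal C}$, the arguments above identify
\[
\Map\pr{J^{\sharp} \btimes \Delta^{m}, \cal C^{\natural}} = \map\pr{\Delta^{m}, \cal C^{J}}, \qquad \Map\pr{\pr{\Delta^{1}}^{\sharp} \btimes \Delta^{m}, \cal C^{\natural}} = \map\pr{\Delta^{m}, \cal C_{\hoeq}},
\]
where the right-hand sides are simplicial hom sets (the second because $\cal C_{\hoeq} \subset \cal C_{1,\ast}$ is a union of components). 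It then suffices to show that $\map\pr{\Delta^{m}, \cal C^{J}} \to \map\pr{\Delta^{m}, \cal C_{\hoeq}}$ is a weak equivalence; this follows from Rezk's Theorem~6.2, which asserts that $\cal C^{J} \to \cal C_{\hoeq}$ is already a trivial Kan fibration, together with the fact that $\map\pr{\Delta^{m}, -}$ preserves trivial Kan fibrations. The main obstacle is simply the careful bookkeeping of enrichments and marking conventions; no essentially new homotopy-theoretic input beyond Rezk's Theorem~6.2 is required.
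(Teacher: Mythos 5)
Your proof is correct and follows essentially the same route as the paper: reduce to the generators (A)--(E) using the saturation argument enabled by Remark \ref{rem:map_sat}, handle (A)--(C) via the mapping-space statements of Joyal--Tierney and Rezk, observe that (D) induces an isomorphism because the markings are automatic for maps out of $J\btimes\Delta^{m}$, and settle (E) with Rezk's Theorem 6.2. The only differences are cosmetic (you invoke that the underlying inclusions are CSS trivial cofibrations together with simplicialness of $\BS_{\CSS}$, where the paper writes out the explicit $\Fun\pr{\Delta^{m},-}$ pushout-product identifications), so no further changes are needed.
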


\begin{proof}
Let $\cal C$ be a complete Segal space. Let $\scr C$ denote the
class of morphsims $\overline{X}\to\overline{Y}$ of marked bisimplicial
sets such that 
\[
\Map\pr{\overline{Y},\cal C^{\natural}}\to\Map\pr{\overline{X},\cal C^{\natural}}
\]
is a homotopy equivalence. We must show that $\scr C$ contains all
marked bianodyne extensions. The class $\scr C$ is saturated by Remark
\ref{rem:map_sat}, so it suffices to show that the morphisms of types
(A) through (E) of Definition \ref{def:mbe} are contained in $\scr C$.

\begin{enumerate}[label=(\Alph*)]

\item For each $n\geq0$, $m\geq1$, and $0\leq k\leq m$, the inclusion
$\pr{\partial\Delta^{n}\btimes\Delta^{m}\cup\Delta^{n}\btimes\Lambda_{k}^{m}}^{\flat}\subset\pr{\Delta^{n}\btimes\Delta^{m}}^{\flat}$
belongs to $\scr C$. Indeed, the map
\[
\Map\pr{\pr{\Delta^{n}\btimes\Delta^{m}}^{\flat},\cal C^{\natural}}\to\Map\pr{\pr{\partial\Delta^{n}\btimes\Delta^{m}\cup\Delta^{n}\btimes\Lambda_{k}^{m}}^{\flat},\cal C^{\natural}}
\]
can be identified with the map
\begin{align*}
\Fun\pr{\Delta^{m},\Map\pr{\Delta^{n}\btimes\Delta^{0},\cal C}}\\
\to\Fun\pr{\Delta^{m},\Map\pr{\partial\Delta^{n}\btimes\Delta^{0},\cal C}} & \times_{\Fun\pr{\Lambda_{k}^{m},\Map\pr{\partial\Delta^{n}\btimes\Delta^{0},\cal C}}}\Fun\pr{\Lambda_{k}^{m},\Map\pr{\Delta^{n}\btimes\Delta^{0},\cal C}}.
\end{align*}
This map is a trivial fibration because the map $\Map\pr{\Delta^{n}\btimes\Delta^{0},\cal C}\to\Map\pr{\partial\Delta^{n}\btimes\Delta^{0},\cal C}$
is a Kan fibration (\cite[Proposition 2.5]{JT07}) and the inclusion
$\Lambda_{k}^{m}\subset\Delta^{n}$ is anodyne.

\item For every $n\geq0$ and $0<k<m$, the inclusion $\pr{\Lambda_{k}^{n}\btimes\Delta^{m}\cup\Delta^{n}\btimes\partial\Delta^{m}}^{\flat}\subset\pr{\Delta^{n}\btimes\Delta^{m}}^{\flat}$
belongs to $\scr C$. This can be proved as in case (A), using the
fact that the map $\Map\pr{\Delta^{n}\btimes\Delta^{0},\cal C}\to\Map\pr{\Lambda_{k}^{n}\btimes\Delta^{0},\cal C}$
is a trivial fibration (\cite[Proposition 3.4]{JT07}).

\item For every $m\geq0$, the inclusion $\pr{\{1\}\btimes\Delta^{m}\cup J\btimes\partial\Delta^{m}}^{\flat}\subset\pr{J\btimes\Delta^{m}}^{\flat}$
belongs to $\scr C$. This can be proved as in case (A), using \cite[Proposition 6.4]{Rezk01}.

\item For each $m\geq0$, the map $J^{\flat}\btimes\Delta^{m}\to J^{\sharp}\btimes\Delta^{m}$
belongs to $\scr C$. Indeed, the map $\Map\pr{J^{\sharp}\btimes\Delta^{m},\cal C}\to\Map\pr{J^{\flat}\btimes\Delta^{m},\cal C}$
is an isomorphism of simplicial sets.

\item For each $m\geq0$, the map $\pr{\Delta^{1}}^{\sharp}\btimes\Delta^{m}\to J^{\sharp}\btimes\Delta^{m}$
belongs to $\scr C$. This follows from \cite[Theorem 6.2]{Rezk01},
which says that the map $\Map\pr{J^{\sharp}\btimes\Delta^{0},\cal C}\to\Map\pr{\pr{\Delta^{1}}^{\sharp}\btimes\Delta^{0},\cal C}$
is a trivial fibration.

\end{enumerate}
\end{proof}
\begin{prop}
\label{prop:marked_eq_fib}Let $\cal C$ and $\cal D$ be complete
Segal spaces and let $f:\cal C\to\cal D$ be a morphism of bisimplicial
sets. The following conditions are equivalent:
\begin{enumerate}
\item The map $f:\cal C^{\natural}\to\cal D^{\natural}$ is a marked equivalence
of marked bisimplicial sets.
\item The map $f:\cal C\to\cal D$ of bisimplicial sets is a weak equivalence
of $\BS_{\CSS}$.
\end{enumerate}
\end{prop}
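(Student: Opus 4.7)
The plan is to reduce both implications to a single observation about mapping spaces: for any two complete Segal spaces $\cal C$ and $\cal E$, the inclusion $\Map(\cal C^\natural, \cal E^\natural) \subset \Map(\cal C, \cal E)$ furnished by Remark \ref{rem:map_sat} is in fact the identity. Once this is in place, the proposition follows essentially from unwinding definitions: every bisimplicial set is Reedy cofibrant and every complete Segal space is Reedy fibrant, so a map $f: \cal C \to \cal D$ of complete Segal spaces is a Rezk equivalence iff $\Map(\cal D, \cal E) \to \Map(\cal C, \cal E)$ is a weak homotopy equivalence for every complete Segal space $\cal E$; via the equality above, this is exactly the defining property of $f: \cal C^\natural \to \cal D^\natural$ being a marked equivalence.

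To prove the key equality, I would show that every bisimplicial map $F: ((\Delta^0)^\flat \btimes \Delta^m) \times \cal C \to \cal E$ automatically carries the markings of $((\Delta^0)^\flat \btimes \Delta^m) \times \cal C^\natural$ into $\cal E_\hoeq$. The degeneracy $(\Delta^0 \btimes \Delta^m)_0 \to (\Delta^0 \btimes \Delta^m)_1$ is a bijection, so the flat markings on $\Delta^0 \btimes \Delta^m$ are all of its first column; consequently a $q$-simplex of marked edges of the product is a pair $(\sigma, e)$ with $\sigma \in \Delta^m_q$ and $e \in \cal C_{\hoeq, q}$. By \cite[Theorem 6.2]{Rezk01}, the membership $e \in \cal C_{\hoeq, q}$ amounts to saying that the classifying map $\Delta^1 \btimes \Delta^q \to \cal C$ extends along $\Delta^1 \hookrightarrow J$. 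Combining this extension with the unique map $J \to \Delta^0$ in the first slot yields an extension of $(\sigma, e): \Delta^1 \btimes \Delta^q \to (\Delta^0 \btimes \Delta^m) \times \cal C$ to $J \btimes \Delta^q$. Post-composing with $F$ produces the desired $J$-extension of $F(\sigma, e)$, whence $F(\sigma, e) \in \cal E_{\hoeq, q}$.

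The main and essentially only nontrivial step is the key equality; the argument sketched above is entirely formal, the sole homotopical input being the extension characterization of $\cal C_\hoeq$ from \cite[Theorem 6.2]{Rezk01}, which has already been exploited in the proof of Proposition \ref{prop:mbe_ext}.
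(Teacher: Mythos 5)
Your proof is correct and takes essentially the same route as the paper, whose proof simply asserts the equality $\Map\pr{\cal C^{\natural},\cal E^{\natural}}=\Map\pr{\cal C,\cal E}$ for complete Segal spaces and then invokes the simpliciality of $\BS_{\CSS}$; your $J$-extension argument merely fills in the verification that the paper leaves implicit. (One cosmetic point: detecting Rezk equivalences between complete Segal spaces via simplicial mapping spaces uses that they are fibrant in $\BS_{\CSS}$ itself, i.e.\ Rezk's Theorem 7.2, rather than mere Reedy fibrancy --- but since these coincide for complete Segal spaces, nothing is amiss.)
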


\begin{proof}
If $\cal E$ is another complete Segal space, we have $\Map\pr{\cal C^{\natural},\cal E^{\natural}}=\Map\pr{\cal C,\cal E}$
and $\Map\pr{\cal D^{\natural},\cal E^{\natural}}=\Map\pr{\cal D,\cal E}$.
Therefore, the claim is a consequence of the fact that the complete
Segal space model structure on $\BS$ is simplicial \cite[Theorem 7.2]{Rezk01}. 
\end{proof}
We can now construct a candidate model structure for Theorem \ref{thm:mCSS_model_structure}.
\begin{thm}
\label{thm:marked_CSS_model_structure_1}There is a combinatorial
model structure on $\SS^{+}$ which satisfies the following conditions:
\begin{itemize}
\item The cofibrations are the monomorphisms.
\item The weak equivalences are the marked equivalences.
\end{itemize}
\end{thm}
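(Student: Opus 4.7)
The plan is to invoke Jeff Smith's recognition theorem for combinatorial model structures (see, for instance, \cite[Proposition A.2.6.13]{HTT}). Local presentability of $\BS^{+}$ is standard (it is a reflective subcategory of a presheaf category, or directly a presheaf category on a slightly augmented index), and the class of monomorphisms is generated by a small set $I$ consisting of the boundary inclusions $\pr{\partial\Delta^{n}\btimes\Delta^{m}\cup\Delta^{n}\btimes\partial\Delta^{m}}^{\flat}\hookrightarrow\pr{\Delta^{n}\btimes\Delta^{m}}^{\flat}$ together with the marking generator $\pr{\Delta^{1}}^{\flat}\btimes\Delta^{0}\hookrightarrow\pr{\Delta^{1}}^{\sharp}\btimes\Delta^{0}$. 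Granting this, it suffices to verify: (i) the class $W$ of marked equivalences satisfies the 2-of-3 property and is closed under retracts; (ii) $W$ is accessible and accessibly embedded in $\Arr(\BS^{+})$; (iii) every map with the right lifting property against $I$ belongs to $W$; and (iv) the class $\cof(I)\cap W$ is stable under pushouts and transfinite composition.

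Conditions (i) and (ii) are essentially formal. Two-of-three and retract closure are inherited from weak homotopy equivalences of simplicial sets via the definition of $W$ in terms of the functors $\Map\pr{-,\cal C^{\natural}}$. For accessibility, one uses that $W$ is the preimage, under a jointly accessible family of functors $\Map\pr{-,\cal C^{\natural}}$ indexed by a set of representatives of complete Segal spaces of bounded size, of the accessible class of weak homotopy equivalences in $\SS$.

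For (iii), a map $\overline{X}\to\overline{Y}$ with the right lifting property against $I$ is both a level-wise trivial fibration on underlying bisimplicial sets and an isomorphism on marked edges (relative to the underlying map). A direct check then shows that for any complete Segal space $\cal C$, the map $\Map\pr{\overline{Y},\cal C^{\natural}}\to\Map\pr{\overline{X},\cal C^{\natural}}$ is a trivial Kan fibration, and in particular a weak homotopy equivalence.

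The main obstacle is condition (iv), which is the delicate heart of any application of Smith's theorem. Here the essential input is Remark \ref{rem:map_sat}: for any monomorphism $\overline{X}\hookrightarrow\overline{Y}$ and any complete Segal space $\cal C$, the induced map $\Map\pr{\overline{Y},\cal C^{\natural}}\to\Map\pr{\overline{X},\cal C^{\natural}}$ is a Kan fibration. It follows that a monomorphism $f$ is a marked equivalence if and only if $\Map\pr{-,\cal C^{\natural}}$ applied to $f$ is a \emph{trivial} Kan fibration for every complete Segal space $\cal C$. Stability under pushouts is then immediate from the fact that $\Map\pr{-,\cal C^{\natural}}$ sends pushouts in $\BS^{+}$ to pullbacks of simplicial sets, combined with the closure of trivial Kan fibrations under pullback. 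Stability under transfinite composition follows analogously, using that $\Map\pr{-,\cal C^{\natural}}$ sends transfinite compositions of monomorphisms to towers of Kan fibrations, whose inverse limits preserve trivial fibrations. Together these four verifications produce the desired combinatorial model structure.
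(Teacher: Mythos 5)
Your overall strategy coincides with the paper's: both proofs run the statement through Lurie's version of Smith's recognition theorem \cite[Proposition A.2.6.8]{HTT}, and both use Remark \ref{rem:map_sat} (monomorphisms give Kan fibrations on $\Map\pr{-,\cal C^{\natural}}$) to see that monomorphisms which are marked equivalences are stable under pushout and transfinite composition. The genuine gap is in your condition (ii), the accessibility of the class of marked equivalences, which you dispose of in one sentence. As written the argument does not work: marked equivalences are defined by quantifying over the proper class of \emph{all} complete Segal spaces, and the claim that a set of representatives ``of bounded size'' detects them is precisely what needs proof, not a formality. Moreover, the functors $\Map\pr{-,\cal C^{\natural}}$ are contravariant, so the ``preimage of an accessible class under an accessible functor'' principle \cite[Corollary A.2.6.5]{HTT} does not apply to them directly (the target would be $\SS^{\op}$, which is not accessible). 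This is where the paper does its real work: it introduces the marked bianodyne extensions (Definition \ref{def:mbe}), shows via Propositions \ref{prop:mbe_ext}, \ref{prop:mbe->me} and \ref{prop:marked_eq_fib} that the small-object-argument replacement functor $T$ along these maps is an accessible fibrant replacement, and then characterizes marked equivalences as the maps $f$ with $\Unm\pr{Tf}$ a weak equivalence of $\BS_{\CSS}$, reducing accessibility to the known combinatoriality of the complete Segal space model structure. Some substitute for this construction (or another genuine argument bounding the detecting complete Segal spaces) is needed; without it your verification of Smith's hypotheses is incomplete.

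Two smaller problems. First, your generating set $I$ is too small: a marking on a bisimplicial set is a simplicial subset $S\subset X_{1}$, so attaching cells along $\pr{\Delta^{1}}^{\flat}\btimes\Delta^{0}\to\pr{\Delta^{1}}^{\sharp}\btimes\Delta^{0}$ only lets you add vertices of $S$ (and their degeneracies); you cannot reach an arbitrary monomorphism, e.g.\ one that adds a nondegenerate $n$-simplex to the marking for $n\geq1$. You need the generators $\pr{\Delta^{1}}^{\flat}\btimes\Delta^{n}\subset\pr{\Delta^{1}}^{\sharp}\btimes\Delta^{n}$ for all $n\geq0$, as in the paper; this also changes the explicit description of the maps with the right lifting property against $I$ (underlying trivial fibration in $\BS_{\CSS}$ plus levelwise surjectivity on markings). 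Second, in your condition (iii) the assertion that $\Map\pr{\overline{Y},\cal C^{\natural}}\to\Map\pr{\overline{X},\cal C^{\natural}}$ is a trivial Kan fibration is not justified: Remark \ref{rem:map_sat} only produces Kan fibrations from \emph{monomorphisms}, and $f$ here need not be one, so precomposition along $f$ need not even be a fibration. What is true, and suffices, is that such an $f$ admits a section and a simplicial homotopy (lifted against the monomorphism $\overline{X}\times\pr{\pr{\Delta^{0}}^{\flat}\btimes\partial\Delta^{1}}\subset\overline{X}\times\pr{\pr{\Delta^{0}}^{\flat}\btimes\Delta^{1}}$) exhibiting it as a marked equivalence.
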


\begin{proof}
Let $\scr W$ denote the class of marked equivalences and let $\scr C$
denote the class of monomorphisms of marked bisimplicial sets. By
\cite[Proposition A.2.6.8]{HTT}\footnote{This is A.2.6.10 in the latest version of Higher Topos Theory \cite{HTT17}.},
we must verify the following:
\begin{enumerate}
\item As a saturated class of morphisms of $\BS^{+}$, the class $\scr C$
is generated by a set of morphisms.
\item The class $\scr C\cap\scr W$ is saturated as a class of morphisms
of $\BS^{+}$.
\item The full subcategory $\bf W\subset\Fun\pr{[1],\BS^{+}}$ spanned by
$\scr W$ is accessible and the inclusion $\bf W\to\Fun\pr{[1],\BS^{+}}$
is an accessible functor.
\item The class $\scr W$ has the two out of three property.
\item If $f$ is a morphism of $\BS^{+}$ having the right lifting property
for all monomorphisms, then $f$ belongs to $\scr W$.
\end{enumerate}
For assertion (1), we simply observe that the inclusions $\{\pr{\partial\Delta^{n}\btimes\Delta^{m}\cup\Delta^{n}\btimes\partial\Delta^{m}}^{\flat}\subset\pr{\Delta^{n}\btimes\Delta^{m}}^{\flat}\mid n,m\ge0\}$
and $\{\pr{\Delta^{1}}^{\flat}\btimes\Delta^{n}\subset\pr{\Delta^{1}}^{\sharp}\btimes\Delta^{n}\mid n\geq0\}$
generate $\scr C$. Assertion (2) follows from Remark \ref{rem:map_sat}.
For assertion (3), use the small object argument \cite[Proposition A.1.2.5]{HTT}
to find a natural transformation $\alpha:\id_{\BS^{+}}\to T$ of endofunctors
of $\BS^{+}$ which satisfies the following conditions:

\begin{enumerate}[label=(\alph*)]

\item For each marked bisimplicial set $\overline{X}$, the map $\overline{X}\to T\overline{X}$
is marked bianodyne.

\item For each marked simplicial set $\overline{X}$, the marked
simplicial set $T\overline{X}$ has the extension property for all
marked bianodyne extensions.

\item The functor $T:\BS^{+}\to\BS^{+}$ commutes with $\kappa$-filtered
colimits, where $\kappa$ is some regular cardinal.

\end{enumerate}

By Propositions \ref{prop:mbe_ext} and \ref{prop:marked_eq_fib},
a morphism of $\BS^{+}$ is a weak equivalence if and only if the
composite $\BS^{+}\xrightarrow{T}\BS^{+}\xrightarrow{\Unm}\BS$ is
a weak equivalence in the complete Segal model structure. Since the
complete Segal model structure is combinatorial \cite[Proposition 9.1]{Rezk01},
assertion (3) now follows from \cite[Corollaries A.2.6.5 and A.2.6.9]{HTT}\footnote{Corollaries A.2.6.5 and A.2.6.9 correspond to A.2.6.7 and A.2.6.11
of \cite{HTT17}.}. 

Assertion (4) is obvious. For assertion (5), notice that a morphism
$f:\pr{X,S}\to\pr{Y,T}$ of marked bisimplicial sets has the right
lifting property for all monomorphisms of $\BS^{+}$ if and only if
it satisfies the following conditions:
\begin{itemize}
\item The map $X\to Y$ is a trivial fibration of $\BS_{\CSS}$.
\item The map $S_{n}\to T_{n}$ is surjective for every $n\geq0$. 
\end{itemize}
The claim is then immediate from Remark \ref{rem:map_sat}. 
\end{proof}
We will refer to the model structure of Theorem \ref{thm:marked_CSS_model_structure_1}
as the \textbf{marked complete Segal space model structure} and denote
it by $\BS_{\CSS}^{+}$. The rest of this section is devoted to showing
that this model structure has all the properties listed in Theorem
\ref{thm:mCSS_model_structure}.

\subsection{Identifying of Fibrant Objects}
\begin{prop}
\label{prop:mCSS_fibrant}The fibrant objects of $\BS_{\CSS}^{+}$
are the marked bisimplicial sets of the form $\cal C^{\natural}$,
where $\cal C$ is a complete Segal space.
\end{prop}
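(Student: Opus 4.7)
The plan is to establish the two inclusions separately. For the forward direction, I will observe that marked bianodyne extensions are trivial cofibrations, and then invoke Proposition \ref{prop:mbe_ext} to identify fibrant objects with those of the form $\cal C^{\natural}$. For the backward direction, I will verify the defining lifting property of fibrancy for $\cal C^{\natural}$ directly, using the simplicial enrichment of $\BS^{+}$ to combine Remark \ref{rem:map_sat} with the very definition of a marked equivalence.

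For the forward direction, let $\overline{X}$ be a fibrant object of $\BS_{\CSS}^{+}$. Each inclusion in Definition \ref{def:mbe} is plainly a monomorphism of marked bisimplicial sets, so every marked bianodyne extension is a cofibration in $\BS_{\CSS}^{+}$. By Proposition \ref{prop:mbe->me}, every marked bianodyne extension is also a marked equivalence, hence a trivial cofibration. Since $\overline{X}$ is fibrant, the map $\overline{X}\to\ast$ has the right lifting property against trivial cofibrations, and in particular against the marked bianodyne extensions. Proposition \ref{prop:mbe_ext} then identifies $\overline{X}$ with $\cal C^{\natural}$ for some complete Segal space $\cal C$.

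For the backward direction, let $\cal C$ be a complete Segal space. To check that $\cal C^{\natural}$ is fibrant, let $i:\overline{A}\to\overline{B}$ be an arbitrary trivial cofibration and $f:\overline{A}\to\cal C^{\natural}$ an arbitrary morphism; we must extend $f$ along $i$. Consider the induced morphism on mapping spaces
\[
i^{\ast}:\Map\pr{\overline{B},\cal C^{\natural}}\to\Map\pr{\overline{A},\cal C^{\natural}}.
\]
The map $i$ is a monomorphism, so Remark \ref{rem:map_sat} shows that $i^{\ast}$ is a Kan fibration. The map $i$ is also a marked equivalence, so by the very definition of a marked equivalence $i^{\ast}$ is a weak homotopy equivalence. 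Consequently $i^{\ast}$ is a trivial Kan fibration, and in particular surjective on $0$-simplices. Applying this surjectivity to the $0$-simplex $f\in\Map\pr{\overline{A},\cal C^{\natural}}_{0}$ yields a $0$-simplex $g\in\Map\pr{\overline{B},\cal C^{\natural}}_{0}$ with $g\circ i=f$, which is the desired extension.

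No serious obstacle presents itself. The only content we really use, beyond the abstract model-structure setup from Theorem \ref{thm:marked_CSS_model_structure_1}, is the bridge already established between marked bianodyne extensions, marked equivalences, and mapping spaces into objects of the form $\cal C^{\natural}$, namely Remark \ref{rem:map_sat}, Proposition \ref{prop:mbe_ext}, and Proposition \ref{prop:mbe->me}.
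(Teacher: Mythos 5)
Your proposal is correct and follows essentially the same route as the paper: the forward direction via Propositions \ref{prop:mbe->me} and \ref{prop:mbe_ext}, and the backward direction by noting that for a trivial cofibration the restriction map on mapping spaces into $\cal C^{\natural}$ is a trivial Kan fibration (Remark \ref{rem:map_sat} plus the definition of marked equivalence), hence surjective on vertices. Your write-up merely spells out in more detail what the paper's one-line converse argument leaves implicit.
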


\begin{proof}
Every fibrant object of $\BS_{\CSS}^{+}$ necessarily has the extension
property for all marked biandoyne extensions (Proposition \ref{prop:mbe->me}),
so it has form $\cal C^{\natural}$ for some complete Segal space
$\cal C$ (Proposition \ref{prop:mbe_ext}). Conversely, if $\cal C$
is a complete Segal space, then $\cal C^{\natural}$ is fibrant by
the definition of marked equivalences and Remark \ref{rem:map_sat}.
The claim follows.
\end{proof}

\subsection{The Marked Complete Segal Space Model structure is Simplicial}
\begin{prop}
\label{prop:marked_CSS_simplicial}The marked complete Segal space
model structure is simplicial with respect to the simplicial enrichment
of Definition \ref{def:BS^+_simplicial}.
\end{prop}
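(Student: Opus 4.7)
The plan is to verify the pushout-product axiom. Given a monomorphism $i \colon \overline{X} \to \overline{Y}$ in $\BS^{+}$ and a monomorphism $j \colon K \to L$ of simplicial sets, I would show that the pushout-product
\[
i \square j \colon (K \otimes \overline{Y}) \cup_{K \otimes \overline{X}} (L \otimes \overline{X}) \to L \otimes \overline{Y}
\]
is a cofibration in $\BS_{\CSS}^{+}$, and a trivial cofibration whenever either $i$ or $j$ is. The cofibration part is immediate: since the tensor $K \otimes \overline{X} = (\pr{\Delta^{0}}^{\flat} \btimes K) \times \overline{X}$ is a cartesian product in $\BS^{+}$, and monomorphisms are stable under products and pushouts, $i \square j$ is automatically a monomorphism.

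For the trivial cofibration part, by the definition of marked equivalences it suffices to prove that $\Map(i \square j, \cal C^{\natural})$ is a weak homotopy equivalence for every complete Segal space $\cal C$. The key step is to use the tensor-mapping-space adjunction $\Map(M \otimes \overline{Z}, \cal C^{\natural}) \cong \Fun_{\SS}(M, \Map(\overline{Z}, \cal C^{\natural}))$ to identify $\Map(i \square j, \cal C^{\natural})$ with the pullback-corner map of simplicial sets induced by the map $\Map(\overline{Y}, \cal C^{\natural}) \to \Map(\overline{X}, \cal C^{\natural})$ and the monomorphism $j \colon K \to L$. After this identification, the claim reduces to a standard application of the pushout-product axiom for the Kan--Quillen model structure on $\SS$: Remark \ref{rem:map_sat} guarantees that $\Map(\overline{Y}, \cal C^{\natural}) \to \Map(\overline{X}, \cal C^{\natural})$ is a Kan fibration, so the pullback-corner map is a Kan fibration, and is a trivial Kan fibration whenever $j$ is anodyne or the base Kan fibration is itself trivial. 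The latter case covers marked equivalences $i$: the base map is then both a Kan fibration (by Remark \ref{rem:map_sat}) and a weak homotopy equivalence (by the definition of marked equivalences), hence a trivial Kan fibration.

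The main subtlety lies in correctly identifying $\Map(i \square j, \cal C^{\natural})$ with a pullback-corner map of simplicial sets; once the adjunction is in place, no further obstacle arises, and the classical simplicial pushout-product axiom supplies the rest.
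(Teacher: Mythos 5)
Your proof is correct and follows essentially the same route as the paper: both reduce the pushout-product axiom, via the tensor--mapping-space adjunction, to the pullback-corner map of simplicial sets for the Kan fibration $\Map(\overline{Y},\cal C^{\natural})\to\Map(\overline{X},\cal C^{\natural})$ (Remark \ref{rem:map_sat}) and conclude by the pushout-product axiom for the Kan--Quillen model structure.
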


\begin{proof}
Let $i:K\to L$ be a monomorphism of simplicial sets and let $j:\overline{X}\to\overline{Y}$
be a monomorphism of marked bisimplicial sets. We must show that the
map
\[
i\wedge j:\pr{K\otimes\overline{Y}}\amalg_{K\otimes\overline{X}}\pr{L\otimes\overline{X}}\to L\otimes\overline{Y}
\]
is a monomorphism, and that it is a marked equivalence if either $i$
is anodyne or $j$ is a marked equivalence. It is obvious that $i\wedge j$
is a monomorphism. Suppose that $i$ is anodyne or $j$ is a marked
equivalence. We must show that, for each complete Segal space $\cal C$,
the map
\[
\Map\pr{L\otimes\overline{Y},\cal C^{\natural}}\to\Map\pr{K\otimes\overline{Y},\cal C^{\natural}}\times_{\Map\pr{K\otimes\overline{X},\cal C^{\natural}}}\Map\pr{L\otimes\overline{X},\cal C^{\natural}}
\]
is a trivial fibration. We can identify this map with the map
\[
\Fun\pr{L,\Map\pr{\overline{Y},\cal C^{\natural}}}\to\Fun\pr{K,\Map\pr{\overline{Y},\cal C^{\natural}}}\times_{\Fun\pr{K,\Map\pr{\overline{X},\cal C^{\natural}}}}\Fun\pr{L,\Map\pr{\overline{X},\cal C^{\natural}}}.
\]
Using Remark \ref{rem:map_sat} and the fact that the Kan--Quillen
model structure on $\SS$ is cartesian, we deduce that the latter
map is a trivial fibration. 
\end{proof}

\subsection{Identifying Fibrations of Fibrant Objects}
\begin{prop}
\label{prop:bs_vs_bs^+}The adjunction
\[
\pr -^{\flat}:\BS_{\CSS}\adj\BS_{\CSS}^{+}:\Unm
\]
is a Quillen equivalence. 
\end{prop}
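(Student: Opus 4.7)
The plan is to exploit a simple identification of mapping spaces. For any bisimplicial set $X$ and any complete Segal space $\cal C$, the canonical map $\Map\pr{X^{\flat},\cal C^{\natural}}\to\Map\pr{X,\cal C}$ is an isomorphism of simplicial sets. Indeed, an $n$-simplex on either side is a map $\varphi:\pr{\Delta^{0}\btimes\Delta^{n}}\times X\to\cal C$ of bisimplicial sets, and the marking condition on the left is automatic: the marked $1$-simplices of $\pr{\pr{\Delta^{0}}^{\flat}\btimes\Delta^{n}}\times X^{\flat}=\pr{\pr{\Delta^{0}\btimes\Delta^{n}}\times X}^{\flat}$ are degenerate, and $\varphi$ sends them to degenerate $1$-simplices of $\cal C$, which are identities and hence homotopy equivalences. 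The same reasoning, together with the fact that $\cal C_{\hoeq}$ is preserved by every bisimplicial map between complete Segal spaces (a $1$-simplex $f\in\cal C_{1}$ lies in $\cal C_{\hoeq}$ iff the corresponding map $\Delta^{1}\btimes\Delta^{0}\to\cal C$ extends to $J\btimes\Delta^{0}\to\cal C$, a property preserved by postcomposition, and the same holds for higher simplices), yields $\Map\pr{\cal C^{\natural},\cal D^{\natural}}=\Map\pr{\cal C,\cal D}$ for any complete Segal spaces $\cal C,\cal D$.

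Given this, the Quillen adjunction is immediate. The functor $\pr -^{\flat}$ clearly preserves monomorphisms, hence cofibrations. It also preserves all weak equivalences: if $f:X\to Y$ is a Rezk equivalence, then for every complete Segal space $\cal C$ the map $\Map\pr{Y,\cal C}\to\Map\pr{X,\cal C}$ is a weak equivalence (because $\BS_{\CSS}$ is simplicial and $\cal C$ is fibrant), and hence $\Map\pr{Y^{\flat},\cal C^{\natural}}\to\Map\pr{X^{\flat},\cal C^{\natural}}$ is a weak equivalence by the identification; since every fibrant object of $\BS_{\CSS}^{+}$ has the form $\cal C^{\natural}$ by Proposition \ref{prop:mCSS_fibrant}, $f^{\flat}$ is a marked equivalence.

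For the Quillen equivalence I apply the standard criterion: for every cofibrant $X\in\BS_{\CSS}$ (every object is cofibrant) and every fibrant $\cal C^{\natural}\in\BS_{\CSS}^{+}$, a map $g:X^{\flat}\to\cal C^{\natural}$ should be a marked equivalence if and only if its adjoint $\hat g:X\to\cal C$ is a Rezk equivalence. By the two mapping space identifications, $g$ is a marked equivalence if and only if $\Map\pr{\cal C,\cal D}\to\Map\pr{X,\cal D}$ is a weak equivalence for every complete Segal space $\cal D$, which is precisely the condition for $\hat g$ to be a Rezk equivalence. The only real content of the argument is the mapping space identification; everything else is formal.
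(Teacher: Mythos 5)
Your proof is correct, and it reaches the conclusion by a somewhat different route than the paper. The paper's argument is: the adjunction is simplicial, so it is a Quillen adjunction once $\pr-^{\flat}$ preserves cofibrations and $\Unm$ preserves fibrant objects (Proposition \ref{prop:mCSS_fibrant}), and it is a Quillen equivalence because $\Unm$ restricts to an \emph{isomorphism} of the simplicial categories of fibrant objects, so the induced functor on homotopy categories of fibrant objects, which models the total right derived functor, is an isomorphism. Your two mapping-space identifications are exactly the content behind that isomorphism; in fact your first one, $\Map\pr{X^{\flat},\o{Y}}\cong\Map\pr{X,\Unm\o{Y}}$, holds for \emph{every} marked bisimplicial target (fibrancy of the target is not needed), since $\pr{\pr{\Delta^{0}}^{\flat}\btimes\Delta^{n}}\times X^{\flat}=\pr{\pr{\Delta^{0}\btimes\Delta^{n}}\times X}^{\flat}$ and any bisimplicial map automatically carries the flat marking into the marking of the target; this is precisely the assertion that the adjunction is simplicial, which the paper invokes through \cite[Corollary A.3.7.2]{HTT}. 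Where you genuinely diverge is the Quillen-equivalence step: instead of comparing homotopy categories of fibrant objects, you verify the adjoint-map criterion directly, using that weak equivalences of $\BS_{\CSS}$ (all objects being cofibrant and the model structure simplicial) are detected by $\Map\pr{-,\cal D}$ for complete Segal spaces $\cal D$, while marked equivalences are by definition detected by $\Map\pr{-,\cal D^{\natural}}$. That detection statement for $\BS_{\CSS}$ is the standard Bousfield-localization description of Rezk equivalences \cite[Theorem 7.2]{Rezk01} and deserves an explicit citation, but it is not a gap. Your route also proves the slightly stronger intermediate fact that $\pr-^{\flat}$ preserves all weak equivalences, not merely trivial cofibrations, whereas the paper's argument is shorter because the derived-functor formalism absorbs the mapping-space bookkeeping.
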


\begin{proof}
First we show that the adjunction is a Quillen adjunction. Since this
is a simplicial adjunction, it suffices to show that $\pr -^{\flat}$
preserves cofibrations and that $\Unm$ preserves fibrant objects
(\cite[Corollary A.3.7.2]{HTT}). The first assertion is obvious,
and the second follows from Proposition \ref{prop:mCSS_fibrant}.
To prove that it is a Quillen equivalence, let $\pr{\BS_{\CSS}}^{\circ}\subset\BS_{\CSS}$
and $\pr{\BS_{\CSS}^{+}}^{\circ}\subset\BS_{\CSS}^{+}$ denote the
full simplicial subcategory of fibrant objects. The simplicial functor
$\Unm:\pr{\BS_{\CSS}^{+}}^{\circ}\to\pr{\BS_{\CSS}}^{\circ}$ is an
isomorphism of simplicial categories, so it induces an isomorphism
of categories after considering the set of path components of each
hom simplicial sets. The latter functor models the total right derived
functor of $\Unm$, so this proves that $\Unm$ is a Quillen equivalence.
\end{proof}
\begin{prop}
\label{prop:mCSS_fibration}Let $f:\cal C\to\cal D$ be a map of complete
Segal spaces. Then the map $f:\cal C^{\natural}\to\cal D^{\natural}$
is a fibration of $\BS_{\CSS}^{+}$ if and only if the map $f:\cal C\to\cal D$
is a fibration of $\BS_{\CSS}$.
\end{prop}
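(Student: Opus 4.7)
My plan is to handle the two directions separately, with the nontrivial content being a factorization-plus-retract argument for the ``if'' direction.

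The ``only if'' direction is immediate: by Proposition \ref{prop:bs_vs_bs^+} the forgetful functor $\Unm:\BS_{\CSS}^{+}\to\BS_{\CSS}$ is a right Quillen functor, so it preserves fibrations; if $f:\cal C^{\natural}\to\cal D^{\natural}$ is a fibration in $\BS_{\CSS}^{+}$, then $\Unm(f)=f:\cal C\to\cal D$ is a fibration in $\BS_{\CSS}$.

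For the ``if'' direction, I would factor $f:\cal C^{\natural}\to\cal D^{\natural}$ in $\BS_{\CSS}^{+}$ as a trivial cofibration $i:\cal C^{\natural}\to\overline{\cal E}$ followed by a fibration $q:\overline{\cal E}\to\cal D^{\natural}$. Since $\cal D^{\natural}$ is fibrant and $q$ is a fibration, $\overline{\cal E}$ is fibrant, so by Proposition \ref{prop:mCSS_fibrant} we have $\overline{\cal E}=\cal E^{\natural}$ for some complete Segal space $\cal E$. Passing to underlying bisimplicial sets, $i:\cal C\to\cal E$ is a monomorphism, and it is a weak equivalence in $\BS_{\CSS}$ by Proposition \ref{prop:marked_eq_fib}; since the cofibrations of $\BS_{\CSS}$ are the monomorphisms, $i$ is therefore a trivial cofibration in $\BS_{\CSS}$. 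Since $f:\cal C\to\cal D$ is a fibration in $\BS_{\CSS}$ by hypothesis, the lifting property yields a map $r:\cal E\to\cal C$ of bisimplicial sets with $r\circ i=\id_{\cal C}$ and $f\circ r=q$.

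The key remaining point is to upgrade $r$ to a morphism of marked bisimplicial sets. But $r$ is a map between complete Segal spaces, so it automatically preserves homotopy equivalences, i.e., $r(\cal E_{\hoeq})\subset\cal C_{\hoeq}$; thus $r$ defines a morphism $\cal E^{\natural}\to\cal C^{\natural}$ satisfying the same equations. This exhibits $f:\cal C^{\natural}\to\cal D^{\natural}$ as a retract of $q:\cal E^{\natural}\to\cal D^{\natural}$ in the arrow category, and since fibrations are closed under retracts, $f$ is a fibration of $\BS_{\CSS}^{+}$. The only place where anything subtle happens is this last step of checking that the lift $r$ is automatically compatible with markings, which succeeds precisely because both marked structures are the canonical ones $\cal C^{\natural},\cal E^{\natural}$ coming from homotopy equivalences.
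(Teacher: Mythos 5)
Your proof is correct, but it follows a genuinely different route from the paper's. You argue by factorization and retract: factor $f:\cal C^{\natural}\to\cal D^{\natural}$ in $\BS_{\CSS}^{+}$ as a trivial cofibration followed by a fibration, identify the middle object as $\cal E^{\natural}$ via Proposition \ref{prop:mCSS_fibrant}, transfer the trivial cofibration to $\BS_{\CSS}$ via Proposition \ref{prop:marked_eq_fib}, lift against the underlying fibration $f:\cal C\to\cal D$, and observe that the lift $r$ automatically respects the natural markings, so that $f$ is a retract of a fibration of $\BS_{\CSS}^{+}$. The paper instead verifies the right lifting property directly: for a trivial cofibration $\overline{X}\to\overline{Y}$ it shows that the induced comparison map of simplicial mapping spaces into $\cal C^{\natural}$ and $\cal D^{\natural}$ is a Kan fibration (Remark \ref{rem:map_sat} plus simpliciality of $\BS_{\CSS}$) and a homotopy equivalence (simpliciality of $\BS_{\CSS}^{+}$, Proposition \ref{prop:marked_CSS_simplicial}, and fibrancy of $\cal C^{\natural}$, $\cal D^{\natural}$), hence a trivial fibration, which yields the needed lifts. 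Your approach is more formal and avoids the enriched mapping-space computation, at the cost of one extra input: the claim that a map of complete Segal spaces $\cal E\to\cal C$ carries $\cal E_{\hoeq}$ into $\cal C_{\hoeq}$. This is true and standard, but since it is precisely the step where the markings are handled, you should justify it: a map of Segal spaces induces a functor on homotopy categories and hence preserves homotopy equivalences on vertices, and $\cal C_{\hoeq}\subset\cal C_{1}$ is a union of components (Remark \ref{rem:map_sat}), so the whole simplicial subset is preserved; this is also the content of Lemma \ref{lem:morphism_of_mbs}, which appears later in the paper but is independent of the present proposition, so no circularity arises. All other ingredients you use (Propositions \ref{prop:bs_vs_bs^+}, \ref{prop:mCSS_fibrant}, \ref{prop:marked_eq_fib}) precede the statement, so your argument is a legitimate alternative to the paper's.
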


\begin{proof}
By Proposition \ref{prop:bs_vs_bs^+}, every fibration of $\BS_{\CSS}^{+}$
becomes a fibration of $\BS_{\CSS}$ after forgetting the markings.
So necessity is obvious. For sufficiency, suppose that $f:\cal C\to\cal D$
is a fibration of $\BS_{\CSS}$. We must show that the induced map
$\cal C^{\natural}\to\cal D^{\natural}$ is a fibration of $\BS_{\CSS}^{+}$.
For this, it suffices to show that, for each trivial cofibration $\overline{X}\to\overline{Y}$
of $\BS_{\CSS}^{+}$, the map
\[
\theta:\Map\pr{\overline{Y},\cal C^{\natural}}\to\Map\pr{\overline{X},\cal D^{\natural}}\times_{\Map\pr{\overline{Y},\cal D^{\natural}}}\Map\pr{\overline{X},\cal C^{\natural}}
\]
is a trivial fibration. Since the model structure on $\BS_{\CSS}$
is simplicial, we deduce from Remark \ref{rem:map_sat} that $\theta$
is a Kan fibration. Therefore, it suffices to show that $\theta$
is a homotopy equivalence. For this, it suffices to show that the
maps
\begin{align*}
\Map\pr{\overline{Y},\cal C^{\natural}} & \to\Map\pr{\overline{X},\cal C^{\natural}},\\
\Map\pr{\overline{Y},\cal D^{\natural}} & \to\Map\pr{\overline{X},\cal D^{\natural}},
\end{align*}
are trivial fibrations. This follows from the fact that the model
structure on $\BS_{\CSS}^{+}$ is simplicial, and the proof is complete.
\end{proof}

\subsection{The Marked Complete Segal Space Model Structure is Cartesian}
\begin{defn}
Let $X$ be a bisimplicial set. Given a subset $S\subset X_{0,0}$,
we define the \textbf{full bisimplicial subset of $X$ spanned by
}$S$ to be the bisimplicial subset $Y\subset X$ consisting of those
$\pr{n,m}$-simplices $x\in X_{n,m}$ such that, for every morphism
$f:\pr{[0],[0]}\to\pr{[n],[m]}$ in $\Del\times\Del$, its pullback
$f^{*}x$ belongs to $S$. Said differently, $Y$ is obtained by first
taking the full simpliclial subset of $Y_{0}\subset X_{0}$ spanned
by $S$, and then taking the full simplicial subset of $X_{\ast,m}$
spanned by $Y_{0,m}$ for every $m\geq1$.
\end{defn}

\begin{prop}
Let $X$ be a bisimplicial set and let $Y\subset X$ be a full bisimplicial
subset.
\begin{enumerate}
\item If $X$ is vertically fibrant (i.e., fibrant in the vertical model
structure \cite[Theorem 2.6]{JT07}), so is $Y$.
\item If $X$ is a Segal space, so is $Y$.
\item If $X$ is a complete Segal space, so is $Y$.
\end{enumerate}
\end{prop}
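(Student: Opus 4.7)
The strategy is to describe the full bisimplicial subset $Y$ uniformly as a pullback. For a simplicial set $K$, let $X\pr{K}$ denote the simplicial set whose $m$-simplices are the simplicial maps $K \to X_{\bullet,m}$; thus $X\pr{\Delta^n} = X_{n,\ast}$, $X\pr{\partial\Delta^n} = M_n X$ is the matching object, and $X\pr{\sk_0 K} = X_0^{K_0}$. A direct inspection from the definition of the full bisimplicial subset shows
\[
Y\pr{K} = X\pr{K} \times_{X_0^{K_0}} Y_0^{K_0},
\]
the right vertical map being the vertex map $X\pr{K} \to X\pr{\sk_0 K}$ induced by $\sk_0 K \hookrightarrow K$. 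Whenever $X$ is vertically fibrant, this vertex map is a Kan fibration for every $K$ (since $\sk_0 K \hookrightarrow K$ is a monomorphism).

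For (1), taking $K = \Delta^n$ and $K = \partial\Delta^n$ yields $Y_n = X_n \times_{M_n X} M_n Y$ as a strict pullback. Since $X_n \to M_n X$ is a Kan fibration, so is $Y_n \to M_n Y$, establishing vertical fibrancy of $Y$. For (2), set $P_n = X_1 \times_{X_0} \cdots \times_{X_0} X_1 = X\pr{\mathrm{sp}^n}$, where $\mathrm{sp}^n$ is the spine of $\Delta^n$, and similarly $Q_n = Y\pr{\mathrm{sp}^n}$. The Segal maps fit into a commutative square
\[
\begin{tikzcd}
Y_n \ar[r] \ar[d, hook] & Q_n \ar[d, hook] \\
X_n \ar[r, "\sim"'] & P_n
\end{tikzcd}
\]
Viewed as a morphism in $\SS/X_0^{n+1}$, $X_n \to P_n$ is a weak equivalence between Kan-fibrant objects (both $X_n \to X_0^{n+1}$ and $P_n \to X_0^{n+1}$ are Kan fibrations by the vertex-map observation, as $\sk_0 \Delta^n$ embeds in both $\Delta^n$ and $\mathrm{sp}^n$). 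Base-changing along $Y_0^{n+1} \to X_0^{n+1}$ yields $Y_n \to Q_n$, and by right properness of the Kan--Quillen model structure this is a weak equivalence; hence $Y$ is Segal.

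For (3), $Y$ is a Segal space by (1) and (2), so $Y_\hoeq \subset Y_1$ is defined, and we must show $s_0: Y_0 \to Y_\hoeq$ is a weak equivalence. Since $X_\hoeq$ is a union of connected components of $X_1$, the intersection $Y_\hoeq = Y_1 \cap X_\hoeq$ is a full sub-Kan complex of $X_\hoeq$ on the vertex set $Y_{\hoeq,0}$. The approach is to compare homotopy groups. For a full sub-Kan complex $K \subset L$ and a basepoint $x \in K$, based spheres have only one vertex, so $\pi_n\pr{K, x} = \pi_n\pr{L, x}$ for all $n \geq 1$. Applied to $Y_0 \subset X_0$ at $x \in S$ and $Y_\hoeq \subset X_\hoeq$ at $\mathrm{id}_x$, this identifies each $\pi_n s_0$ ($n \geq 1$) with its counterpart for $X$, which is an isomorphism by completeness of $X$. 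For $\pi_0$, verify by hand that the bijection $\pi_0 X_0 \cong \pi_0 X_\hoeq$ restricts to a bijection $\pi_0 Y_0 \cong \pi_0 Y_\hoeq$: given a hoeq $e: a \to b$ with $a, b \in S$, completeness of $X$ gives $[e] = [\mathrm{id}_c]$ in $\pi_0 X_\hoeq$; tracking the endpoints along a connecting path in $X_\hoeq$ forces $[a] = [c]$ in $\pi_0 X_0$, so one may replace $c$ by any $x \in S$ with $[x] = [a]$ to conclude $[e] = [\mathrm{id}_x]$ already in $\pi_0 Y_\hoeq$.

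The hard part is (3): pullback along a full sub-Kan complex inclusion need not preserve weak equivalences (for instance, pulling back the weak equivalence $\mathrm{pt} \to J$ sending $\mathrm{pt} \mapsto 0$ along $\{1\} \hookrightarrow J$ yields $\emptyset \to \{1\}$), so the clean pullback argument used in (1) and (2) cannot be applied to the completeness map $s_0$. The $\pi_*$-computation circumvents this by leveraging the completeness bijection for $X$ at the level of $\pi_0$ to rule out the pathological components.
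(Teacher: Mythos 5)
Your proposal is correct in substance but follows a genuinely different route from the paper's, most visibly in part (3). The paper proves (1) essentially as you do (the matching map of $Y$ is a pullback of that of $X$, with $n=0$ treated separately by noting $Y_0$ is a Kan complex), declares (2) similar, and for (3) invokes the Joyal--Tierney criterion: it shows each $Y/\Delta^{0}\to Y/\Delta^{n}$ is a categorical equivalence, getting fully faithfulness from the corresponding equivalence for $X$ and essential surjectivity from the retraction $X/\Delta^{n}\to X/\{0\}$. You instead verify Rezk's completeness condition $s_0\colon Y_0\to Y_{\hoeq}$ directly; the observation that a full simplicial subset of a Kan complex has the same higher homotopy groups at its own vertices is an efficient way to import completeness of $X$, and the uniform pullback description $Y\pr{K}=X\pr{K}\times_{X_0^{K_0}}Y_0^{K_0}$ organizes (1) and (2) cleanly. (In (2) you could even skip the cogluing/right-properness step: $X_n\to P_n$ is a weak equivalence and a Kan fibration because $X$ is vertically fibrant, hence a trivial fibration, and $Y_n\to Q_n$ is its pullback along $Q_n\to P_n$.) The paper's argument for (3) is shorter because it works at the level of the associated $\infty$-categories; yours is more elementary and makes explicit exactly where completeness of $X$ enters at the level of $\pi_0$.

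Two points need tightening. First, the formula $Y_n=X_n\times_{M_nX}M_nY$ fails at $n=0$, where the matching object is terminal; you must add that $Y_0$ is a Kan complex, which holds because it is a full simplicial subset of the Kan complex $X_0$. Second, in the $\pi_0$ step you only argue surjectivity of $\pi_0 Y_0\to\pi_0 Y_{\hoeq}$, and even there the final clause (``already in $\pi_0 Y_{\hoeq}$'') needs justification: the point is that in a Kan complex any two vertices in the same component are joined by a single edge, and an edge of $X_{\hoeq}$ (resp.\ $X_0$) both of whose endpoints lie in $Y_{\hoeq}$ (resp.\ $Y_0$) belongs to the full subcomplex $Y_{\hoeq}$ (resp.\ $Y_0$). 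With this remark, $[e]=[\id_a]$ in $\pi_0 X_{\hoeq}$ upgrades to $[e]=[\id_a]$ in $\pi_0 Y_{\hoeq}$, and the omitted injectivity of $\pi_0 s_0$ follows from completeness of $X$ by the same device. You should also record the easy but used facts that $Y_1$ is the full simplicial subset of $X_1$ spanned by $Y_{1,0}$ and that a morphism of the Segal space $Y$ is a homotopy equivalence in $Y$ if and only if it is one in $X$ (its inverse and the witnessing $2$-simplices have all vertices in $S$), so that $Y_{\hoeq}=Y_1\cap X_{\hoeq}$ as you assert.
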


\begin{proof}
First we prove (1). Suppose that $X$ is vertically fibrant. We must
show that $Y$ is vertically fibrant. In other words, we must show
that for each $n\ge0$, the map
\[
\Delta^{n}\backslash Y\to\partial\Delta^{n}\backslash Y
\]
is a Kan fibration. (See \cite[$\S$2]{JT07} for the definition of
the slice $-\backslash-$.) If $n=0$, this is clear because $Y_{0}$
is a Kan complex. If $n\geq1$, this follows from the observation
that, the map $\Delta^{n}\backslash Y\to\partial\Delta^{n}\backslash Y$
is a pullback of the Kan fibration $\Delta^{n}\backslash X\to\partial\Delta^{n}\backslash X$.
The proof of assertion (2) is similar. For assertion (3), suppose
that $X$ is a complete Segal space. We use \cite[Proposition 4.4]{JT07}
to prove that $Y$ is a complete Segal space: We must show that $Y$
satisfies the following conditions:

\begin{enumerate}[label=(\roman*)]

\item For each $n\geq0$, the map $Y/\Delta^{n}\to Y/\partial\Delta^{n}$
is a categorical fibration.

\item For each $n\geq0$, the map $\theta_{n}:Y/\Delta^{0}\to Y/\Delta^{n}$
is a categorical equivalence.

\end{enumerate}

Assertion (i) can be proved just like (1). For assertion (ii), we
will show that $\theta_{n}$ is fully faithful and essentially surjective.
Since $X$ is a complete Segal space, the functor $\theta'_{n}:X/\Delta^{0}\to X/\Delta^{n}$
is a categorical equivalence. It follows that $\theta_{n}$ is fully
faithful. To show that $\theta_{n}$ is essentially surjective, take
an arbitrary object $y\in Y/\Delta^{n}$. Find an object object $x\in X/\Delta^{0}$
such that $\theta'_{n}\pr x$ is equivalent to $y$ in $X/\Delta^{n}$.
Since the composite
\[
X/\Delta^{0}\xrightarrow{\theta'_{n}}X/\Delta^{n}\xrightarrow{\phi}X/\{0\}\cong X/\Delta^{0}
\]
is the identity map and $\phi$ carries $y$ into $Y/\Delta^{0}$,
we deduce that $x$ is equivalent to an object of $Y/\Delta^{0}$.
Hence $\theta_{n}$ is essentially surjective, and the proof is complete.
\end{proof}
\begin{prop}
\label{prop:pointwise_equivalence}Let $\cal C$ be a complete Segal
space and $X$ a bisimplicial set. A morphism of $\cal C^{X}$ is
an equivalence if and only if, for each $x\in X_{0,0}$, its image
under the map $\cal C^{X}\to\cal C^{\{x\}}$ is an equivalence.
\end{prop}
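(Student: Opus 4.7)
The plan is to prove the only-if direction by direct functoriality, then tackle the if direction via a reduction to base cases. The only-if direction is automatic: since $\BS_{\CSS}$ is a cartesian model category (\cite[Theorem 7.2]{Rezk01}) and $\cal C$ is fibrant, the cotensor $\cal C^{X}$ is itself a complete Segal space and each evaluation $\cal C^{X}\to\cal C^{\{x\}}=\cal C$ is a morphism of complete Segal spaces; morphisms of complete Segal spaces preserve homotopy equivalences.

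For the if direction, the central closure property is the following: if $X=X'\cup_{X''}X'''$ is a pushout along a monomorphism $X''\hookrightarrow X'$ and the proposition holds for each of $X'$, $X''$, $X'''$, then it holds for $X$. Indeed, the cotensor transforms this pushout into a pullback $\cal C^{X}=\cal C^{X'}\times_{\cal C^{X''}}\cal C^{X'''}$ which, by cartesianness of $\BS_{\CSS}$, is a homotopy pullback of complete Segal spaces (the map $\cal C^{X'}\to\cal C^{X''}$ being a fibration), and a morphism in such a homotopy pullback is a homotopy equivalence iff its projections to the factors are. Parallel arguments yield closure under transfinite compositions of monomorphisms and transfer under Rezk equivalences of bisimplicial sets that preserve the vertex set $X_{0,0}$ (the latter using that cotensor takes weak equivalences to Rezk equivalences between complete Segal spaces, again by cartesianness).

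Combining these closures with the cell presentation of $\BS$, the product decomposition $\Delta^{n}\btimes\Delta^{m}=(\Delta^{n}\btimes\Delta^{0})\times(\Delta^{0}\btimes\Delta^{m})$ together with $\cal C^{A\times B}=(\cal C^{A})^{B}$, and the Segal decomposition (which produces a Rezk equivalence from a spine of $\Delta^{1}$'s into $\Delta^{n}$ with matching vertex sets), I will reduce the problem to the base cases $X=\Delta^{1}\btimes\Delta^{0}$ and, symmetrically, $X=\Delta^{0}\btimes\Delta^{1}$.

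The main technical obstacle will be these base cases. For $X=\Delta^{1}\btimes\Delta^{0}$, the complete Segal space $\cal C^{X}$ is the arrow Segal space of $\cal C$: its objects are $1$-morphisms of $\cal C$, and morphisms between them are commutative squares in $\cal C$ whose evaluations at the two vertices of $\Delta^{1}$ are the vertical sides of the square. The claim becomes the familiar statement that a commutative square in $\cal C$ is an equivalence in the arrow Segal space iff its two vertical arrows are equivalences. I plan to prove the nontrivial direction by lifting each vertical arrow through Rezk's trivial Kan fibration $\Map(J\btimes\Delta^{0},\cal C)\twoheadrightarrow\cal C_{\hoeq}$ (\cite[Theorem 6.2]{Rezk01}), and then assembling these lifts together with the given square into a direct extension to a map $J\btimes\Delta^{0}\times\Delta^{1}\btimes\Delta^{0}\to\cal C$, using the fibrancy of $\cal C$ in $\BS_{\CSS}$ to handle the required coherence.
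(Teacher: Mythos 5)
Your route is genuinely different from the paper's, which passes to quasi-categories via the right Quillen functor $-/\Delta^{0}:\BS_{\CSS}\to\SS_{{\rm Joyal}}$ and runs a skeletal induction using stability of conservative inner fibrations under pullbacks, products and inverse limits, handling the cells by horn box-product trivial cofibrations and citing the conservativity of $\Fun\pr{\Delta^{1},\cal E}\to\Fun\pr{\partial\Delta^{1},\cal E}$ for the last case. Your architecture (closure under pushouts along monomorphisms, coproducts, transfinite compositions, and vertex-preserving Rezk equivalences; reduction through spines and the exponential law to $\Delta^{1}\btimes\Delta^{0}$ and $\Delta^{0}\btimes\Delta^{1}$) is sound in outline, but two load-bearing steps are asserted rather than proved, and one of them is essentially the content of the proposition. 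The milder one: the claim that in a pullback $\cal C^{X'}\times_{\cal C^{X''}}\cal C^{X'''}$ along a fibration a morphism is an equivalence as soon as its two projections are is a joint-conservativity statement that is true but not formal; it needs a $J$-lifting argument, or the paper's device of transferring to quasi-categories and quoting stability of conservative (inner) fibrations under pullback.

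The serious gap is the base case. After lifting the two vertical arrows through $\Map\pr{J\btimes\Delta^{0},\cal C}\to\cal C_{\hoeq}$, you must extend the resulting map on $\pr{\pr{\Delta^{1}\times\Delta^{1}}\cup\pr{J\times\partial\Delta^{1}}}\btimes\Delta^{0}$ over $\pr{J\times\Delta^{1}}\btimes\Delta^{0}$, and ``fibrancy of $\cal C$ in $\BS_{\CSS}$'' produces such an extension only if that inclusion is a trivial cofibration of $\BS_{\CSS}$. But the assertion that it is a trivial cofibration is exactly the assertion that a square whose two sides are equivalences extends over $J\times\Delta^{1}$, i.e., it is a reformulation of the pointwise criterion you are proving in this case; so, as written, the crucial coherence step is circular (or at least unjustified), since fibrancy alone gives nothing here. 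The gap is fillable, but only by importing an external input: for instance, the pushout-product of $\pr{\Delta^{1}}^{\sharp}\subset J^{\sharp}$ with $\pr{\partial\Delta^{1}}^{\flat}\subset\pr{\Delta^{1}}^{\flat}$ is a trivial cofibration in $\SS_{\cart}^{+}$ by cartesian monoidality, whence the simplicial inclusion $\pr{\Delta^{1}\times\Delta^{1}}\cup\pr{J\times\partial\Delta^{1}}\subset J\times\Delta^{1}$ is a trivial cofibration for the Joyal model structure, and one transports it to $\BS_{\CSS}$ along the left Quillen functor $\pr -\btimes\Delta^{0}$; alternatively one quotes the quasi-categorical pointwise criterion for natural equivalences over $\Delta^{1}$, which is the same citation the paper leans on. Some such named input must replace the appeal to fibrancy; once it is supplied, your argument goes through and gives a legitimate, more ``from first principles'' alternative to the paper's proof, at the cost of a longer chain of closure lemmas.
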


\begin{proof}
Necessity is obvious. For sufficiency, for each $p\geq0$, let $\sk_{p}X$
denote the $p$-skeleton of $X$. In other words, it is a bisimplicial
subset of $X$ consisting of those simplices $x\in A_{k,l}$ which
lies in the image of the map $\pr{f,g}^{*}:X_{i,j}\to X_{k,l}$, where
$f:[k]\to[i]$ and $g:[l]\to[j]$ are surjective poset maps such that
$i+j\leq p$. The bisimplicial set $X$ is the union of the increasing
sequence $\sk_{0}X\subset\sk_{1}X\subset\cdots$. We will show that,
for each $p\geq1$, the functor $\cal C^{\sk_{p}X}/\Delta^{0}\to\cal C^{\sk_{p-1}X}/\Delta^{0}$
is a conservative inner fibration of $\infty$-categories. Since conservative
inner fibrations are stable under transfinite cocompositions \cite[Corollary 2.1.10]{Landoo-cat},
it will then follow that the functor $\cal C^{X}/\Delta^{0}\to\cal C^{\sk_{0}X}/\Delta^{0}$
is conservative, proving the proposition.

For each $p\geq1$, there is a pushout square % https://q.uiver.app/#q=WzAsNCxbMCwxLCJcXGNvcHJvZF97XFxhbHBoYVxcaW5cXFNpZ21hX3B9KFxcRGVsdGFee25fXFxhbHBoYX1cXGJveHRpbWVzXFxEZWx0YV57cC1uX1xcYWxwaGF9KSJdLFswLDAsIlxcY29wcm9kX3tcXGFscGhhXFxpblxcU2lnbWFfcH0oKFxccGFydGlhbFxcRGVsdGFee25fXFxhbHBoYX1cXGJveHRpbWVzXFxEZWx0YV57cC1uX1xcYWxwaGF9KVxcY3VwKFxcRGVsdGFee25fXFxhbHBoYX1cXGJveHRpbWVzXFxwYXJ0aWFsXFxEZWx0YV57cC1uX1xcYWxwaGF9KSkiXSxbMSwwLCJcXG9wZXJhdG9ybmFtZXtza31fe3AtMX1YIl0sWzEsMSwiXFxvcGVyYXRvcm5hbWV7c2t9X3twfVgiXSxbMSwwXSxbMSwyXSxbMiwzXSxbMCwzXV0=
\[\begin{tikzcd}
	{\coprod_{\alpha\in\Sigma_p}((\partial\Delta^{n_\alpha}\boxtimes\Delta^{p-n_\alpha})\cup(\Delta^{n_\alpha}\boxtimes\partial\Delta^{p-n_\alpha}))} & {\operatorname{sk}_{p-1}X} \\
	{\coprod_{\alpha\in\Sigma_p}(\Delta^{n_\alpha}\boxtimes\Delta^{p-n_\alpha})} & {\operatorname{sk}_{p}X}
	\arrow[from=1-1, to=2-1]
	\arrow[from=1-1, to=1-2]
	\arrow[from=1-2, to=2-2]
	\arrow[from=2-1, to=2-2]
\end{tikzcd}\]for some set $\Sigma_{p}$ \cite[Theorem 1.3.8]{CisinskiHCHA}. Since
conservative inner fibrations are stable under pullback and products
\cite[Corollary 2.1.10]{Landoo-cat}, we are reduced to showing that,
for each $p\geq1$ and pair of integers $n,m\geq0$ such that $n+m=p$,
the functor
\[
\theta:\cal C^{\Delta^{n}\btimes\Delta^{m}}/\Delta^{0}\to\cal C^{\pr{\partial\Delta^{n}\btimes\Delta^{m}}\cup\pr{\Delta^{n}\btimes\partial\Delta^{m}}}/\Delta^{0}
\]
is a conservative inner fibration. It is an inner fibration, even
a categorical fibration, because the complete Segal model structure
is cartesian and the functor $-/\Delta^{0}:\BS_{\CSS}\to\SS_{{\rm Joyal}}$
is right Quillen \cite[4.11]{JT07}. To show that it is conservative,
we must consider several cases.
\begin{enumerate}
\item Suppose that $n\geq2$. By \cite[Proposition 3.10 and Lemma 7.14]{JT07},
the inclusion
\[
\pr{\Lambda_{1}^{n}\btimes\Delta^{m}}\cup\pr{\Delta^{n}\btimes\partial\Delta^{m}}\subset\Delta^{n}\btimes\Delta^{m}
\]
is a trivial cofibration of $\BS_{\CSS}$. It follows that the composite
\[
\cal C^{\Delta^{n}\btimes\Delta^{m}}/\Delta^{0}\xrightarrow{\theta}\cal C^{\pr{\partial\Delta^{n}\btimes\Delta^{m}}\cup\pr{\Delta^{n}\btimes\partial\Delta^{m}}}/\Delta^{0}\to\cal C^{\pr{\Lambda_{1}^{n}\btimes\Delta^{m}}\cup\pr{\Delta^{n}\btimes\partial\Delta^{m}}}/\Delta^{0}
\]
is a trivial fibration of $\infty$-categories. Hence $\theta$ is
conservative.
\item Suppose that $m\geq1$. By \cite[Proposition 2.5 and Lemma 7.14]{JT07},
the inclusion
\[
\pr{\partial\Delta^{n}\btimes\Delta^{m}}\cup\pr{\Delta^{n}\btimes\Lambda_{0}^{m}}\subset\Delta^{n}\btimes\Delta^{m}
\]
is a trivial cofibration of $\BS_{\CSS}$. It follows that the composite
\[
\cal C^{\Delta^{n}\btimes\Delta^{m}}/\Delta^{0}\xrightarrow{\theta}\cal C^{\pr{\partial\Delta^{n}\btimes\Delta^{m}}\cup\pr{\Delta^{n}\btimes\partial\Delta^{m}}}/\Delta^{0}\to\cal C^{\pr{\partial\Delta^{n}\btimes\Delta^{m}}\cup\pr{\Delta^{n}\btimes\Lambda_{0}^{m}}}/\Delta^{0}
\]
is a trivial fibration of $\infty$-categories. Hence $\theta$ is
conservative.
\item Suppose that $n=1$ and $m=0$. In this case, $\theta$ can be identified
with the functor
\[
\Fun\pr{\Delta^{1},X/\Delta^{0}}\to\Fun\pr{\partial\Delta^{1},X/\Delta^{0}},
\]
which is conservative by \cite[Proposition 2.2.3]{Landoo-cat}.
\end{enumerate}
\end{proof}
\begin{rem}
Proposition \ref{prop:pointwise_equivalence} remains valid in the
case where $\cal C$ is merely a Segal space, with the same proof.
\end{rem}

\begin{lem}
\label{lem:morphism_of_mbs}Let $\cal C$ be a complete Segal space
and $\overline{X}=\pr{X,S}$ a marked bisimplicial set. A map $\varphi:X\to\cal C$
of bisimplicial sets determines a map $\overline{X}\to\cal C^{\natural}$
of marked bisimplicial sets if and only if $\varphi$ carries each
element of $S_{0}$ to an equivalence of $\cal C$.
\end{lem}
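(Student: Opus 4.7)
The plan is to reduce the entire statement to a vertex-level check, exploiting the fact that $\cal C_\hoeq\subset\cal C_1$ is a \emph{full} simplicial subset spanned by the homotopy equivalences. Concretely, because being a homotopy equivalence is a condition that depends only on the path component of a $0$-simplex in the Kan complex $\cal C_1$ (this is the fact already used tacitly in Remark \ref{rem:map_sat}), an $n$-simplex $\tau\in\cal C_{1,n}$ lies in $\cal C_\hoeq$ if and only if every vertex $\tau(i)\in\cal C_{1,0}$ is an equivalence. Recording this characterization is the only substantive step; once it is in hand, the lemma is essentially a bookkeeping exercise.

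The necessity direction is then immediate: if $\varphi$ determines a morphism $\overline{X}\to\cal C^\natural$ of marked bisimplicial sets, then by definition $\varphi$ carries $S\subset X_1$ into $\cal C_\hoeq\subset\cal C_1$; restricting to $0$-simplices shows $\varphi(S_0)\subset(\cal C_\hoeq)_0$, which is exactly the assertion that each element of $S_0$ is sent to an equivalence.

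For sufficiency, I would fix $\sigma\in S_n$ for arbitrary $n\ge 0$ and show $\varphi(\sigma)\in(\cal C_\hoeq)_n$. Since $S$ is a simplicial subset of $X_1=X_{1,\ast}$, every vertex of $\sigma$ lies in $S_0$, so by hypothesis every vertex of $\varphi(\sigma)$ is an equivalence of $\cal C$. The vertex-level characterization of $\cal C_\hoeq$ recalled above then gives $\varphi(\sigma)\in\cal C_\hoeq$, completing the argument. No genuine obstacle is anticipated; the conceptual content is entirely in the observation that the ``full'' simplicial subset $\cal C_\hoeq$ is determined by which vertices it contains.
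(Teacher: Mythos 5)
Your proof is correct and is essentially the paper's argument: the paper disposes of the lemma in one line by citing that $\mathcal{C}_{\mathrm{hoeq}}\subset\mathcal{C}_{1}$ is a union of connected components, which is exactly the fact you use to detect membership in $\mathcal{C}_{\mathrm{hoeq}}$ on vertices. Your write-up merely spells out the resulting bookkeeping (vertices of a simplex of $S$ lie in $S_{0}$), so there is no substantive difference.
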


\begin{proof}
This is immediate from the fact that the simplicial subset $\cal C_{\hoeq}\subset\cal C_{1}$
is a union of components.
\end{proof}
\begin{prop}
\label{prop:exponential_fibrant}Let $\cal C$ be a complete Segal
space and $\overline{X}$ a marked bisimplicial set. Then $\pr{\cal C^{\natural}}^{\overline{X}}=\cal D^{\natural}$,
where $\cal D\subset\cal C^{\overline{X}}$ denotes the full bisimplicial
subset spanned by the morphisms $\overline{X}\to\cal C^{\natural}$
of marked bisimplicial sets.
\end{prop}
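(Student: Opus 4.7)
The plan is to identify the underlying bisimplicial set and the marking of $(\cal C^\natural)^{\overline{X}}$ separately via the defining adjunction $\Hom_{\BS^+}(\overline{Z}, (\cal C^\natural)^{\overline{X}}) \cong \Hom_{\BS^+}(\overline{Z} \times \overline{X}, \cal C^\natural)$. Taking $\overline{Z}=(\Delta^n\btimes\Delta^m)^\flat$ and $\overline{Z}=(\Delta^1\btimes\Delta^q)^\sharp$ yields, respectively, the $(n,m)$-simplices of the underlying bisimplicial set and the $q$-simplices of the marking. (The latter identification is valid because the smallest marking on $\Delta^1\btimes\Delta^q$ containing the universal $(1,q)$-simplex together with the degeneracies from the zeroth column coincides with the whole first column $\Hom([1],[1])\times\Delta^q$.)

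For the underlying bisimplicial set, an $(n,m)$-simplex of $(\cal C^\natural)^{\overline{X}}$ is a bisimplicial map $f\colon \Delta^n\btimes\Delta^m\times X\to\cal C$ carrying every marked edge of $(\Delta^n\btimes\Delta^m)^\flat\times\overline{X}$ into $\cal C_{\hoeq}$. These marked edges are precisely the products of a vertex $v$ of $\Delta^n\btimes\Delta^m$ with an edge of $S$, so by Lemma \ref{lem:morphism_of_mbs} the condition amounts to saying that each restriction $f|_{\{v\}\times X}$ defines a morphism $\overline{X}\to\cal C^\natural$. By the definition of the full bisimplicial subset spanned by $\cal D_{0,0}$, this is equivalent to the adjoint $(n,m)$-simplex of $\cal C^X$ lying in $\cal D_{n,m}$. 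Hence the underlying bisimplicial set of $(\cal C^\natural)^{\overline{X}}$ is $\cal D$.

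For the marking, a marked $q$-simplex corresponds to a bisimplicial map $f\colon\Delta^1\btimes\Delta^q\times X\to\cal C$ such that $f((a,b),e)\in\cal C_{\hoeq}$ for every $(a,b)\in\Hom([1],[1])\times\Delta^q_0$ and every $e\in S_0$. The cases where $a$ is a constant recover the underlying condition, identifying $f$ with a $(1,q)$-simplex $\tilde f$ of $\cal D$. It remains to show that the remaining case $a=\id_{[1]}$ is equivalent to each edge $\tilde f(\id_{[1]},b)\in(\cal C^X)_{1,0}$ being a homotopy equivalence of $\cal C^X$; given this, Proposition \ref{prop:pointwise_equivalence} together with the fact that $\cal D_{\hoeq}$ is a union of components of $\cal D_1$ finishes the argument.

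The main obstacle is this last equivalence. Necessity is easy: specializing $e$ to the degenerate edge $s^x$ over a vertex $x\in X_{0,0}$ (which lies in $S_0$ because markings contain degeneracies) gives that the image of $\tilde f(\id_{[1]},b)$ under $\cal C^X\to\cal C^{\{x\}}=\cal C$ is in $\cal C_{\hoeq}$ for every $x$, so Proposition \ref{prop:pointwise_equivalence} applies. For sufficiency, given a general $e\in S_0$, I would consider the square
\[
\tau \;=\; f\circ\bigl((\id_{[1]},b)\times e\bigr)\colon (\Delta^1\times\Delta^1)\btimes\Delta^0 \longrightarrow \cal C
\]
whose diagonal is the edge $f(\id_{[1]},b,e)$. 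Two opposite sides of $\tau$ are the images of $\tilde f(\id_{[1]},b)$ at the endpoints of $e$, which are homotopy equivalences by Proposition \ref{prop:pointwise_equivalence}; the other two are of the form $g_{j,b}(e)$ for $g_{j,b}=f|_{\{(j,b)\}\times X}$, which are homotopy equivalences since each $g_{j,b}$ is a morphism $\overline{X}\to\cal C^\natural$ and $e\in S_0$. Because homotopy equivalences in $\cal C$ are closed under composition and satisfy two-out-of-three, the diagonal $f(\id_{[1]},b,e)$ is itself a homotopy equivalence, as required.
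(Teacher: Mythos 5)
Your proof is correct and takes essentially the same route as the paper's: probe the exponential with $\pr{\Delta^n\btimes\Delta^m}^{\flat}$ and $\pr{\Delta^1\btimes\Delta^q}^{\sharp}$ and reduce everything to Lemma \ref{lem:morphism_of_mbs} and Proposition \ref{prop:pointwise_equivalence}. The only difference is that you spell out the one non-formal step (a square all of whose sides are homotopy equivalences has homotopy-equivalence diagonal, via composition in the homotopy category), which the paper leaves implicit in its one-line deduction.
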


\begin{proof}
Unwinding the definitions, we must prove the following:
\begin{enumerate}
\item Let $n,m\geq0$. A map $\pr{\Delta^{n}\btimes\Delta^{m}}\times X\to\cal C$
of bisimplicial sets determines a map $\pr{\Delta^{n}\btimes\Delta^{m}}^{\flat}\times\overline{X}\to\cal C^{\natural}$
if and only if it belongs to $\cal D$.
\item Let $m\geq0$. A map $\pr{\Delta^{1}\btimes\Delta^{m}}^{\flat}\times\overline{X}\to\cal C^{\natural}$
of marked bisimplicial sets determines a map $\pr{\Delta^{1}\btimes\Delta^{m}}^{\sharp}\times\overline{X}\to\cal C^{\natural}$
if and only if it belongs to $\cal D_{\hoeq}$.
\end{enumerate}
These assertions are consequences of Proposition \ref{prop:pointwise_equivalence}
and Lemma \ref{lem:morphism_of_mbs}.
\end{proof}
\begin{prop}
\label{prop:bss^+_cartesian}The marked complete Segal space model
structure is cartesian.
\end{prop}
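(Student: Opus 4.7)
The plan is to verify the pushout-product axiom for the marked complete Segal space model structure. Concretely, given monomorphisms $i: \overline{A}\to\overline{B}$ and $j:\overline{C}\to\overline{D}$ of marked bisimplicial sets, I must show that
\[
i\square j:(\overline{A}\times\overline{D})\amalg_{\overline{A}\times\overline{C}}(\overline{B}\times\overline{C})\to\overline{B}\times\overline{D}
\]
is a monomorphism, and a marked equivalence whenever either $i$ or $j$ is. The cofibration part is immediate: on underlying bisimplicial sets the pushout-product of monomorphisms in the presheaf category $\BS$ is a monomorphism, and a direct inspection of markings shows that the pushout of the product markings $S_{A}\times S_{D}$ and $S_{B}\times S_{C}$ injects into the product marking $S_{B}\times S_{D}$.

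For the weak equivalence part, by symmetry I may assume $j$ is a trivial cofibration. I would fix an arbitrary fibrant object $\overline{Z}=\cal C^{\natural}$ and prove that the induced map $\Map(i\square j,\overline{Z})$ is a trivial Kan fibration; this is enough to conclude that $i\square j$ is a marked equivalence. Using the cartesian closure of $\BS^{+}$ and the universal property of pushouts, this map is canonically identified with the corner map
\[
\theta:\Map(\overline{D},\overline{Z}^{\overline{B}})\to\Map(\overline{D},\overline{Z}^{\overline{A}})\times_{\Map(\overline{C},\overline{Z}^{\overline{A}})}\Map(\overline{C},\overline{Z}^{\overline{B}}),
\]
and by Proposition \ref{prop:exponential_fibrant} both $\overline{Z}^{\overline{A}}$ and $\overline{Z}^{\overline{B}}$ are fibrant in $\BS_{\CSS}^{+}$.

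Now I would leverage the simplicial structure (Proposition \ref{prop:marked_CSS_simplicial}): the SM7 axiom, translated by the tensor--cotensor adjunction, implies that $\Map(j,\overline{W})$ is a trivial Kan fibration for every fibrant $\overline{W}$ and every trivial cofibration $j$ in $\BS_{\CSS}^{+}$. Applying this with $\overline{W}=\overline{Z}^{\overline{A}}$ and $\overline{W}=\overline{Z}^{\overline{B}}$ shows that both $\Map(j,\overline{Z}^{\overline{A}})$ and $\Map(j,\overline{Z}^{\overline{B}})$ are trivial Kan fibrations. The projection from the pullback onto $\Map(\overline{C},\overline{Z}^{\overline{B}})$ is the base change of $\Map(j,\overline{Z}^{\overline{A}})$, hence also a trivial Kan fibration; its postcomposition with $\theta$ is precisely $\Map(j,\overline{Z}^{\overline{B}})$, again a trivial Kan fibration. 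By two-out-of-three, $\theta$ is a weak equivalence, and since $i\square j$ is a monomorphism and $\overline{Z}$ is fibrant, $\theta$ is a Kan fibration by Remark \ref{rem:map_sat}; hence $\theta$ is a trivial Kan fibration.

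The only conceptual subtlety is the circularity one might fear in trying to directly prove that $\overline{Z}^{\overline{D}}\to\overline{Z}^{\overline{C}}$ is a fibration of $\BS_{\CSS}^{+}$; the plan sidesteps this by working one variable at a time through the simplicial SM7 axiom, where all the needed fibrancy is supplied by Proposition \ref{prop:exponential_fibrant}. Identifying the adjoint form of the corner map and recognizing that $\overline{Z}^{\overline{X}}$ is always fibrant are the key inputs, and after that the argument is a formal two-out-of-three calculation with trivial Kan fibrations.
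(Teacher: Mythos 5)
Your proof is correct, but it follows a genuinely different route from the paper's. The paper reduces the pushout-product axiom to showing that each functor $\overline{A}\times-$ is left Quillen and, via the criterion of \cite[Proposition 7.15]{JT07}, to showing that the right adjoint $(-)^{\overline{A}}$ carries fibrations between fibrant objects to fibrations; after using Proposition \ref{prop:exponential_fibrant} and Proposition \ref{prop:mCSS_fibration} to pass to underlying bisimplicial sets, this is settled by an explicit lifting argument against the inclusions $\pr{\pr{\partial\Delta^{n}\btimes\Delta^{m}}\cup\pr{\Delta^{n}\btimes\Lambda_{k}^{m}}}^{\flat}\subset\pr{\Delta^{n}\btimes\Delta^{m}}^{\flat}$, with a separate argument in the case $n=0$, $m=1$ showing that an unmarked lift automatically respects markings because $\mathcal{C}_{\hoeq}\subset\mathcal{C}_{1}$ is a union of components. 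You instead verify the weak-equivalence half of the pushout-product axiom by testing against fibrant objects: after identifying the induced map of mapping spaces with the corner map $\theta$, the fibrancy of the exponentials (Proposition \ref{prop:exponential_fibrant}) combined with the simplicial structure (Proposition \ref{prop:marked_CSS_simplicial}) --- or, even more economically, combined with the definition of marked equivalence and Remark \ref{rem:map_sat}, which already make $\Map\pr{j,\overline{W}}$ a trivial Kan fibration for any trivial cofibration $j$ and fibrant $\overline{W}$ --- yields that $\theta$ is a weak equivalence by a two-out-of-three argument among trivial Kan fibrations. Both arguments hinge on Proposition \ref{prop:exponential_fibrant}, but yours bypasses Proposition \ref{prop:mCSS_fibration} and the case-by-case lifting analysis entirely, so it is shorter and more formal; the paper's route has the side benefit of recording in passing that $(-)^{\overline{A}}$ preserves fibrations between fibrant objects.
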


\begin{proof}
We must show that for each pair of monomorphisms $i:\overline{A}\to\overline{B}$
and $j:\overline{X}\to\overline{Y}$ of marked bisimplicial sets,
the map
\[
\pr{\overline{A}\times\overline{Y}}\amalg_{\overline{A}\times\overline{X}}\pr{\overline{B}\times\overline{X}}\to\overline{B}\times\overline{Y}
\]
is a monomorphism, and that it is a marked equivalence if $i$ or
$j$ is a marked equivalence. The first assertion is obvious. For
the second assertion, it suffices to show that each object $\overline{A}\in\BS^{+}$
gives rise to a left Quillen functor $\overline{A}\times-:\BS_{\CSS}^{+}\to\BS_{\CSS}^{+}$.
Since it preserves cofibrations, it suffices to show that its right
adjoint $-^{\overline{A}}:\BS_{\CSS}^{+}\to\BS_{\CSS}^{+}$ carries
fibrations of fibrant objects to fibrations \cite[Proposition 7.15]{JT07}.
So let $f:\cal C^{\natural}\to\cal D^{\natural}$ be a fibration of
fibrant objects of $\BS_{\CSS}^{+}$. We wish to show that the map
$\pr{\cal C^{\natural}}^{\overline{A}}\to\pr{\cal D^{\natural}}^{\overline{A}}$
is a fibration of $\BS_{\CSS}^{+}$. According to Proposition \ref{prop:exponential_fibrant},
the marked bisimplicial sets $\pr{\cal C^{\natural}}^{\overline{A}}$
and $\pr{\cal D^{\natural}}^{\overline{A}}$ are fibrant in $\BS_{\CSS}^{+}$.
Thus, by Proposition \ref{prop:mCSS_fibration}, it suffices to show
that its underlying map is a fibration of $\BS_{\CSS}$. In other
words, we must show that for each $n\geq0$, $m\geq1$, and $0\le k\leq m$,
every lifting problem of the form% https://q.uiver.app/#q=WzAsNCxbMSwwLCIoXFxtYXRoY2Fse0N9XlxcbmF0dXJhbClee1xcb3ZlcmxpbmV7QX19Il0sWzAsMCwiKChcXHBhcnRpYWxcXERlbHRhXm5cXGJveHRpbWVzXFxEZWx0YV5tKVxcY3VwIChcXERlbHRhXm5cXGJveHRpbWVzXFxMYW1iZGFebV9rKSleXFxmbGF0Il0sWzEsMSwiKFxcbWF0aGNhbHtEfV5cXG5hdHVyYWwpXntcXG92ZXJsaW5le0F9fSJdLFswLDEsIihcXERlbHRhXm5cXGJveHRpbWVzIFxcRGVsdGFebSleXFxmbGF0Il0sWzEsM10sWzMsMl0sWzEsMF0sWzAsMl0sWzMsMCwiIiwxLHsic3R5bGUiOnsiYm9keSI6eyJuYW1lIjoiZGFzaGVkIn19fV1d
\[\begin{tikzcd}
	{((\partial\Delta^n\boxtimes\Delta^m)\cup (\Delta^n\boxtimes\Lambda^m_k))^\flat} & {(\mathcal{C}^\natural)^{\overline{A}}} \\
	{(\Delta^n\boxtimes \Delta^m)^\flat} & {(\mathcal{D}^\natural)^{\overline{A}}}
	\arrow[from=1-1, to=2-1]
	\arrow[from=2-1, to=2-2]
	\arrow[from=1-1, to=1-2]
	\arrow[from=1-2, to=2-2]
	\arrow[dashed, from=2-1, to=1-2]
\end{tikzcd}\]admits a solution. 

Suppose first that $n\geq1$ or $m\geq2$. In this case, the bisimplicial
subset $\pr{\partial\Delta^{n}\btimes\Delta^{m}}\cup\pr{\Delta^{n}\btimes\Lambda_{k}^{m}}\subset\Delta^{n}\btimes\Delta^{m}$
induces a bijection in the set of $\pr{0,0}$-simplices. Since $\Unm\pr{\pr{\cal C^{\natural}}^{\overline{A}}}$
and $\Unm\pr{\pr{\cal C^{\natural}}^{\overline{A}}}$ are full simplicial
subset of $\cal C^{A}$ and $\cal D^{A}$, and since $\mathcal{C}^{A}\to\mathcal{D}^{A}$
is a fibration of $\BS_{\CSS}$, it follows that the lifting problem
admits a solution. 

Next, suppose that $n=0$ and $m=1$. In this case, the lifting problem
we wish to solve can be rewritten as % https://q.uiver.app/#q=WzAsNCxbMSwwLCJcXG1hdGhjYWx7Q31eXFxuYXR1cmFsIl0sWzAsMCwiKFxcRGVsdGFeMFxcYm94dGltZXMgXFx7a1xcfSleXFxmbGF0XFx0aW1lcyBcXG92ZXJsaW5le0F9Il0sWzEsMSwiXFxtYXRoY2Fse0R9XlxcbmF0dXJhbCJdLFswLDEsIihcXERlbHRhXjBcXGJveHRpbWVzIFxcRGVsdGFeMSleXFxmbGF0XFx0aW1lcyBcXG92ZXJsaW5le0F9Il0sWzEsM10sWzAsMl0sWzMsMCwiXFxwaGkiLDEseyJzdHlsZSI6eyJib2R5Ijp7Im5hbWUiOiJkYXNoZWQifX19XSxbMywyXSxbMSwwXV0=
\[\begin{tikzcd}
	{(\Delta^0\boxtimes \{k\})^\flat\times \overline{A}} & {\mathcal{C}^\natural} \\
	{(\Delta^0\boxtimes \Delta^1)^\flat\times \overline{A}} & {\mathcal{D}^\natural.}
	\arrow[from=1-1, to=2-1]
	\arrow[from=1-2, to=2-2]
	\arrow["\phi"{description}, dashed, from=2-1, to=1-2]
	\arrow[from=2-1, to=2-2]
	\arrow[from=1-1, to=1-2]
\end{tikzcd}\]Since $\Unm\pr f$ is a fibration of fibrant objects of $\BS_{\CSS}$,
there is a map $\phi:\pr{\Delta^{0}\btimes\Delta^{1}}\times A\to\cal C$
rendering the diagram commutative if we forget the markings. We claim
that any such map in fact lifts to a morphism $\pr{\Delta^{0}\btimes\Delta^{1}}^{\flat}\times\overline{A}\to\cal C^{\natural}$. 

Let $\alpha\in A_{1,0}$ be an element which is marked in $\overline{A}$.
According to Lemma \ref{lem:morphism_of_mbs}, it suffices to show
that the morphism $\phi_{1,0}\pr{\id_{\varepsilon},\alpha}\in1,0$
is an equivalence of $\cal C$ for each $\varepsilon\in\{0,1\}$.
Here we identified the set $\pr{\Delta^{0}\btimes\Delta^{1}}_{1,0}\times A_{1,0}$
with $\Delta_{1,0}^{1}\times A_{1,0}$. Consider the element $\pr{0<1,s_{0}\alpha}\in A_{1,1}$.
The element $\phi_{1,1}\pr{0<1,s_{0}\alpha}\in\cal C_{1,1}$ determines
a path $\phi_{1,0}\pr{\id_{0},\alpha}\to\phi_{1,0}\pr{\id_{1},\alpha}$
in $\cal C_{1}$. Moreover, we know that one of $\phi_{1,0}\pr{\id_{0},\alpha}$
and $\phi_{1,0}\pr{\id_{1},\alpha}$ (namely, $\phi_{1,0}\pr{\id_{k},\alpha}$)
is an equivalence. Since $\cal C_{\hoeq}\subset\cal C_{1}$ is a union
of components, we deduce that both $\phi_{1,0}\pr{0,\alpha}$ and
$\phi_{1,0}\pr{1,\alpha}$ are equivalences. The claim follows.
\end{proof}

\subsection{Proof of Theorem \ref{thm:mCSS_model_structure}}
\begin{proof}
[Proof of Theorem \ref{thm:mCSS_model_structure}]We will show that
the marked complete Segal space model structure on $\BS^{+}$ has
the desired properties. By definition, it is combinatorial and satisfies
conditions (1) and (3). We saw that it is cartesian in Proposition
\ref{prop:bss^+_cartesian} and that it is simplicial in Proposition
\ref{prop:marked_CSS_simplicial}. Condition (2) follows from Proposition
\ref{prop:mCSS_fibrant}, and condition (4) follows from \ref{prop:mCSS_fibration}.
Condition (5) follows from Propositions \ref{prop:mCSS_fibrant} and
\ref{prop:marked_eq_fib}. The proof is now complete.
\end{proof}

\section{\label{sec:relations}Relating Various Model Structures}

In this section, we relate the marked complete Segal space model structure
with various model structures.

We start by introducing a pair of adjunctions relating marked simplicial
sets and marked bisimplicial sets, which are analogs of the unmarked
case \cite[Theorems 4.11 and 4.22]{JT07}.
\begin{defn}
\cite[$\S$2.2]{Rasekh21} Define a functor $\pr{p_{1}^{+}}^{*}:\SS^{+}\to\BS^{+}$
as follows: If $\pr{X,S}$ is a marked simplicial set, then $\pr{p_{1}^{+}}^{*}\pr{X,S}$
is the marked bisimplicial set whose rows are all $\pr{X,S}$. Define
also a functor $\pr{i_{1}^{+}}^{*}:\BS^{+}\to\SS^{+}$ by extracting
the first row. This defines an adjunction
\[
\pr{p_{1}^{+}}^{*}:\SS^{+}\adj\BS^{+}:\pr{i_{1}^{+}}^{*}.
\]
We also define a functor $\pr{t^{+}}_{!}:\BS^{+}\to\SS^{+}$ by
\[
\pr{t^{+}}_{!}\pr{X,S}=\pr{\opn{diag}X,S_{1}},
\]
where $\opn{diag}X$ denotes the simplicial set defined by $\pr{\opn{diag}X}_{n}=X_{n,n}$.
The right adjoint of $\pr{t^{+}}_{!}$ will be denoted by $\pr{t^{+}}^{!}$.
Explicitly, if $\overline{X}$ is a marked simplicial set, then $\pr{t^{+}}^{!}$
denotes the marked bisimplicial set whose underlying bisimplicial
set is given by $\pr{n,m}\mapsto\BS^{+}\pr{\pr{\Delta^{n}}^{\flat}\times\pr{\Delta^{m}}^{\sharp},\overline{X}}$.
The marked edges are the maps $\pr{\Delta^{1}}^{\sharp}\times\pr{\Delta^{m}}^{\sharp}\to\overline{X}$
of marked simplicial sets.
\end{defn}

We also need another model structure on $\BS^{+}$.
\begin{defn}
The category $\BS^{+}$ of marked bisimplicial sets can be identified
with the category $\pr{\SS^{+}}^{\Del^{\op}}$ of simplicial objects
in the category of marked simplicial sets. The \textbf{Reedy model
structure} on $\BS^{+}$, denoted by $\BS_{{\rm Reedy}}^{+}$, is
the Reedy model structure (equivalently, the injective model structure)
on $\pr{\SS^{+}}^{\Del^{\op}}$ with respect to the cartesian model
structure on $\SS_{\cart}^{+}$.
\end{defn}

We need yet another definition.
\begin{defn}
A marked bisimplicial set $\overline{X}=\pr{X,S}\in\BS^{+}$ is said
to be \textbf{categorically constant} if for each $n\geq0$, the map
$\pr{X_{\ast,0},S_{0}}\to\pr{X_{\ast,n},S_{n}}$ is a cartesian equivalence
of marked simplicial sets.
\end{defn}

Here is the main result of this section.
\begin{thm}
\label{thm:relations}
\begin{enumerate}
\item The adjunctions % https://q.uiver.app/#q=WzAsNSxbMSwxLCJcXG1hdGhzZntic1NldH1eK197XFxtYXRocm17Q1NTfX0iXSxbMSwwLCJcXG1hdGhzZntic1NldH1eK197XFxtYXRocm17UmVlZHl9fSJdLFsyLDEsIlxcbWF0aHNme3NTZXR9Xitfe1xcbWF0aHJte2NhcnR9fSJdLFswLDEsIlxcbWF0aHNme3NTZXR9Xitfe1xcbWF0aHJte2NhcnR9fSJdLFsxLDIsIlxcbWF0aHNme2JzU2V0fV97XFxtYXRocm17Q1NTfX0iXSxbMSwwLCJcXGRhc2h2IiwwLHsib2Zmc2V0IjozfV0sWzIsMCwiXFxib3QiLDIseyJvZmZzZXQiOi0zfV0sWzAsMiwiKHReKylfISIsMCx7Im9mZnNldCI6LTN9XSxbMCwxLCJcXG9wZXJhdG9ybmFtZXtpZH0iLDIseyJvZmZzZXQiOjN9XSxbMiwwLCIodF4rKV4hIiwwLHsib2Zmc2V0IjotM31dLFsxLDAsIlxcb3BlcmF0b3JuYW1le2lkfSIsMix7Im9mZnNldCI6M31dLFswLDMsIlxcYm90IiwyLHsib2Zmc2V0IjotM31dLFswLDMsIihpXzFeKyleKiIsMCx7Im9mZnNldCI6LTN9XSxbMywwLCIocF8xXispXioiLDAseyJvZmZzZXQiOi0zfV0sWzQsMCwiKC0pXlxcZmxhdCIsMCx7Im9mZnNldCI6LTN9XSxbNCwwLCJcXGRhc2h2IiwyLHsib2Zmc2V0IjotM31dLFswLDQsIlxcb3BlcmF0b3JuYW1le1VubX0iLDAseyJvZmZzZXQiOi0zfV1d
\[\begin{tikzcd}[sep=large]
	& {\mathsf{bsSet}^+_{\mathrm{Reedy}}} \\
	{\mathsf{sSet}^+_{\mathrm{cart}}} & {\mathsf{bsSet}^+_{\mathrm{CSS}}} & {\mathsf{sSet}^+_{\mathrm{cart}}} \\
	& {\mathsf{bsSet}_{\mathrm{CSS}}}
	\arrow["\dashv", shift right=3, from=1-2, to=2-2]
	\arrow["\bot"', shift left=3, from=2-3, to=2-2]
	\arrow["{(t^+)_!}", shift left=3, from=2-2, to=2-3]
	\arrow["{\operatorname{id}}"', shift right=3, from=2-2, to=1-2]
	\arrow["{(t^+)^!}", shift left=3, from=2-3, to=2-2]
	\arrow["{\operatorname{id}}"', shift right=3, from=1-2, to=2-2]
	\arrow["\bot"', shift left=3, from=2-2, to=2-1]
	\arrow["{(i_1^+)^*}", shift left=3, from=2-2, to=2-1]
	\arrow["{(p_1^+)^*}", shift left=3, from=2-1, to=2-2]
	\arrow["{(-)^\flat}", shift left=3, from=3-2, to=2-2]
	\arrow["\dashv"', shift left=3, from=3-2, to=2-2]
	\arrow["{\operatorname{Unm}}", shift left=3, from=2-2, to=3-2]
\end{tikzcd}\]are all Quillen adjunctions, and all but the top vertical adjunction
are Quillen equivalences.
\item The marked complete Segal space model structure on $\BS^{+}$ is a
Bousfield localization of the Reedy model structure. An object $\overline{X}\in\BS_{\CSS}^{+}$
is fibrant if and only if it is Reedy fibrant and categorically constant
(cf. \cite[Theorem 4.5]{JT07}).
\end{enumerate}
\end{thm}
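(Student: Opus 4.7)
I would handle the four adjunctions separately. The bottom $(-)^{\flat}\dashv\Unm$ is already established as a Quillen equivalence in Proposition \ref{prop:bs_vs_bs^+}. For the top adjunction $\id\colon\BS^{+}_{\Reedy}\leftrightarrows\BS^{+}_{\CSS}$, the core task is to show that every CSS-fibrant object $\cal C^{\natural}$ is Reedy fibrant. I would verify the matching-object condition at each level $n$ using Proposition \ref{prop:mCSS_fibration} together with the compatibility of the markings $\cal C_{\hoeq}$ with the bisimplicial structure, reducing to the known Reedy fibrancy of $\cal C$ in $\BS_{\CSS}$ (cf.\ \cite[Theorem 4.5]{JT07}). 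Once $\cal C^{\natural}$ is Reedy fibrant, the standard enrichment argument shows that any Reedy weak equivalence induces an equivalence on $\Map(-,\cal C^{\natural})$ and is therefore a marked equivalence; this yields both the Quillen adjointness of $\id\dashv\id$ and the Bousfield localization statement in part (2).

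For the left and right adjunctions, I would first check Quillen adjointness directly: $(p_{1}^{+})^{*}$ and $(t^{+})_{!}$ send the generating cofibrations of $\SS^{+}_{\cart}$ to cofibrations of $\BS^{+}_{\CSS}$, and their right adjoints $(i_{1}^{+})^{*}$ and $(t^{+})^{!}$ carry each fibrant $\cal C^{\natural}$ to $\pr{\cal C_{\ast,0}}^{\natural}$ and $(\diag\cal C)^{\natural}$, respectively, preserving fibrations between fibrant objects by Proposition \ref{prop:mCSS_fibration} and the analogous criterion for $\SS^{+}_{\cart}$. To upgrade to Quillen equivalences I would use the commuting diagrams of left adjoints coming from the $(-)^{\flat}$ adjunctions, namely $(-)^{\flat}\circ p_{1}^{*}=(p_{1}^{+})^{*}\circ(-)^{\flat}$ and $(-)^{\flat}\circ\diag_{!}=(t^{+})_{!}\circ(-)^{\flat}$. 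Since the vertical $(-)^{\flat}$ adjunctions (by \cite[\S 3.1]{HTT} and Proposition \ref{prop:bs_vs_bs^+}) and the top unmarked Joyal--Tierney equivalences \cite[Theorems 4.11, 4.22]{JT07} are all Quillen equivalences, the 2-out-of-3 property for Quillen equivalences forces $(p_{1}^{+})^{*}$ and $(t^{+})_{!}$ to be Quillen equivalences as well.

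For the remaining assertion of part (2), necessity of categorical constancy for $\cal C^{\natural}$ reduces to showing that $\pr{\cal C_{\ast,0},(\cal C_{\hoeq})_{0}}\to\pr{\cal C_{\ast,n},(\cal C_{\hoeq})_{n}}$ is a cartesian equivalence; via the Joyal--Tierney equivalence $i_{1}^{*}$, both sides model the same underlying $\infty$-category with the same marked equivalences, once we invoke the Segal and completeness conditions on $\cal C$. Sufficiency runs in the other direction: if $\overline{X}$ is Reedy fibrant and categorically constant, I would show that $\overline{X}$ has the right lifting property for all marked bianodyne extensions and conclude $\overline{X}=\cal C^{\natural}$ for some CSS $\cal C$ by Proposition \ref{prop:mbe_ext}. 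Lifting for types (A)--(C) follows from Reedy fibrancy plus the Joyal--Tierney criterion \cite[Theorem 4.5]{JT07}, while lifting for types (D)--(E) is exactly the content of categorical constancy applied to the first column.

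The principal obstacle will be the final paragraph: carefully translating categorical constancy into the marked bianodyne lifting property and back. In particular, relating the marked simplicial cartesian equivalence condition row-by-row to the completeness condition in the first column requires delicate bookkeeping, mirroring but refining the Joyal--Tierney characterization of Reedy fibrant complete Segal spaces.
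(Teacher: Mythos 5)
Your overall skeleton (the four adjunctions handled separately, the bottom one via Proposition \ref{prop:bs_vs_bs^+}, Quillen adjointness of the horizontal adjunctions via preservation of fibrations between fibrant objects, and part (2) via the marked analogue of \cite[Theorem 4.5]{JT07}) matches the paper, and your two-out-of-three argument for $(p_1^+)^*$ using the strictly commuting square $(p_1^+)^*\circ(-)^\flat=(-)^\flat\circ p_1^*$ is essentially the paper's. The genuine gap is in your upgrade of $(t^+)_!\dashv(t^+)^!$ to a Quillen equivalence. The identity $(-)^\flat\circ\diag_{!}=(t^+)_!\circ(-)^\flat$ you invoke is false and, more importantly, there is no unmarked Quillen equivalence to compare with along it: $(t^+)_!(X^\flat)$ is $\diag X$ with all \emph{horizontally degenerate} edges marked, which is not $(\diag X)^\flat$; the plain diagonal is not even left Quillen from $\BS_{\CSS}$ to $\SS_{\mathrm{Joyal}}$ (it sends the $\BS_{\CSS}$-trivial cofibration $\Delta^0\btimes\Lambda_0^m\subset\Delta^0\btimes\Delta^m$ to the non-trivial cofibration $\Lambda_0^m\subset\Delta^m$); and the Joyal--Tierney equivalence $t_!$ of \cite{JT07} is defined using groupoid-completed simplices, so $t_!X\neq\diag X$ and the square does not commute with it either. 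The paper's fix is cheap and you should adopt it: since $(i_1^+)^*\circ(t^+)^!=\id_{\SS^+}$ and $(i_1^+)^*$ is already a Quillen equivalence, two-out-of-three for derived equivalences gives the claim. (Relatedly, $(t^+)^!(\cal C^\natural)$ is not ``$(\diag\cal C)^\natural$''; it is the marked classification diagram $(n,m)\mapsto\SS^+((\Delta^n)^\flat\times(\Delta^m)^\sharp,\cal C^\natural)$, and before applying Proposition \ref{prop:mCSS_fibration} you must check it has the form $\cal D^\natural$ for a complete Segal space $\cal D$.)

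A second, repairable, issue is the ``standard enrichment argument'' for the identity adjunction: the enrichment of Definition \ref{def:BS^+_simplicial} does \emph{not} make $\BS^+_{\Reedy}$ a simplicial model category, because the tensor is levelwise an external copower, so $K\otimes\overline{X}\to L\otimes\overline{X}$ fails to be a Reedy weak equivalence for anodyne $K\to L$. What your argument actually needs, and what is true, is that the pushout-product of a monomorphism of simplicial sets with a Reedy trivial cofibration is levelwise a coproduct of identities and trivial cofibrations, hence a Reedy trivial cofibration; combined with Reedy fibrancy of $\cal C^\natural$ this does show that Reedy weak equivalences are marked equivalences. Note also that Reedy fibrancy of $\cal C^\natural$ is the nontrivial ``only if'' half of Proposition \ref{prop:mCSS_fib_cat_const}, proved via Proposition \ref{prop:cart_fib_vs_joy_fib} and \cite[Theorem 4.5]{JT07}; Proposition \ref{prop:mCSS_fibration}, which you cite for it, concerns maps between fibrant objects and is not the right tool. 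The paper sidesteps all of this by instead showing that $(t^+)_!:\BS^+_{\Reedy}\to\SS^+_{\cart}$ is left Quillen and that the weak equivalences of $\BS^+_{\CSS}$ are created by $(t^+)_!$ (Proposition \ref{prop:Reedy_vs_mCSS}), which is shorter and does not require knowing in advance that $\cal C^\natural$ is Reedy fibrant.
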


The proof of Theorem \ref{thm:relations} will occupy the remainder
of this section.
\begin{prop}
\label{prop:cart_fib_vs_joy_fib}Let $p:\cal C\to\cal D$ be a functor
of $\infty$-categories. The following conditions are equivalent:
\begin{enumerate}
\item The map $\cal C^{\natural}\to\cal D^{\natural}$ of marked simplicial
sets is a fibration of $\SS_{\cart}^{+}$.
\item The functor $p$ is a categorical fibration.
\end{enumerate}
\end{prop}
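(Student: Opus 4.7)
The plan is to prove the two implications separately, with the forward one being quick via a general Quillen-adjunction argument, and the reverse requiring a case-by-case verification against a generating set of trivial cofibrations.

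For (1) $\Rightarrow$ (2), my first step is to show that the functor $(-)^{\flat}:\SS_{{\rm Joyal}}\to\SS_{{\rm cart}}^{+}$ is left Quillen. Preservation of cofibrations is automatic, since both model structures have the monomorphisms as cofibrations. For preservation of trivial cofibrations, the key observation is that for any simplicial set $X$ and any $\infty$-category $\cal E$, one has $\Fun^{S}(X^{\flat},\cal E^{\natural})=\Fun(X,\cal E)$: the marked edges of $X^{\flat}$ are the degenerate ones, whose images under any functor are identities, which are equivalences. Hence, for a Joyal trivial cofibration $i:A\to B$, the restriction $\Fun(B,\cal E)\to\Fun(A,\cal E)$ is a categorical equivalence, which is exactly the statement that $i^{\flat}$ is a cartesian equivalence. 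Assuming $p^{\natural}:\cal C^{\natural}\to\cal D^{\natural}$ is a fibration in $\SS_{{\rm cart}}^{+}$, the adjunction $(-)^{\flat}\dashv\Unm$ converts any lifting problem for $p$ against $i$ into a lifting problem for $p^{\natural}$ against $i^{\flat}$, which is solvable; thus $p$ is a categorical fibration.

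For (2) $\Rightarrow$ (1), suppose $p$ is a categorical fibration, so $\cal C^{\natural}$ and $\cal D^{\natural}$ are both fibrant in $\SS_{{\rm cart}}^{+}$. It suffices to verify that $p^{\natural}$ has the right lifting property against each map in a generating set of trivial cofibrations of $\SS_{{\rm cart}}^{+}$. Using the marked anodyne generators of \cite[Definition 3.1.1.1]{HTT} in the case of base $\Delta^{0}$, three families must be treated: (i) the inner horn inclusions $(\Lambda_{k}^{n})^{\flat}\hookrightarrow(\Delta^{n})^{\flat}$ for $0<k<n$; (ii) the outer horn inclusion with the last edge marked, $(\Lambda_{n}^{n})^{\flat}\cup(\Delta^{\{n-1,n\}})^{\sharp}\hookrightarrow(\Delta^{n})^{\flat}\cup(\Delta^{\{n-1,n\}})^{\sharp}$ for $n\geq1$; and (iii) saturation maps forcing the marked edges on fibrant objects to coincide with the equivalences. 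Type (i) is handled directly because $p$ is an inner fibration. Type (ii) reduces, using the fact that the markings on $\cal C^{\natural}$ and $\cal D^{\natural}$ are precisely the equivalences, to the classical statement that an inner fibration lifts such outer horns whenever the distinguished edge in the base is an equivalence—a standard property of categorical fibrations. Type (iii) is automatic, as the markings on $\cal C^{\natural}$ and $\cal D^{\natural}$ already exhaust the equivalences.

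The principal obstacle will be in direction (2) $\Rightarrow$ (1), specifically the outer-horn case, where one must upgrade the inner-fibration lift to a lift compatible with the marked-edge condition. This relies on a classical lifting property of categorical fibrations (namely, the right lifting property against $\{0\}\hookrightarrow J$, equivalently the ability to lift outer horns with an invertible edge; see, e.g., \cite[\S 2.4.6]{HTT}), combined with the fact that an edge in $\cal C$ lying over an equivalence in $\cal D$ under a categorical fibration is automatically an equivalence in $\cal C$.
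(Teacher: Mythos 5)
Your argument for (1)$\implies$(2) is correct and is essentially the paper's: the paper simply cites the fact that $\pr -^{\flat}\dashv\Unm$ is a Quillen adjunction between $\SS_{{\rm Joyal}}$ and $\SS_{\cart}^{+}$ (\cite[Proposition 3.1.5.3]{HTT}), and your verification that $\pr -^{\flat}$ preserves trivial cofibrations, via $\Fun^{S}\pr{X^{\flat},\cal E}=\Fun\pr{X,\cal E}$, is a sound way to reprove that citation.

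The direction (2)$\implies$(1), however, has a genuine gap at its very first reduction. You assert that it suffices to check the right lifting property of $\cal C^{\natural}\to\cal D^{\natural}$ against the marked anodyne generators of \cite[Definition 3.1.1.1]{HTT}, calling these ``a generating set of trivial cofibrations of $\SS_{\cart}^{+}$.'' That is not true: every marked anodyne map is a trivial cofibration, but the saturated class they generate is strictly smaller than the class of all trivial cofibrations of the cartesian model structure (for instance, over a point the cartesian equivalences are invariant under taking opposites, while the marked anodyne maps only encode right-pointing special outer horns), and Lurie's construction of $\SS_{\cart}^{+}$ does not provide an explicit set of generating trivial cofibrations. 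Consequently, the RLP against the maps of Definition 3.1.1.1 does not by itself imply that a map is a fibration. What rescues the argument is the additional hypothesis that both objects are fibrant: a map \emph{between fibrant objects} of $\SS_{\cart}^{+}$ having the RLP against marked anodyne maps (equivalently, whose underlying functor is an inner fibration lifting $\{0\}\to J$) is indeed a fibration, but this is precisely the nontrivial input here --- it is what the paper outsources to \cite[Proposition 1.1.7]{JacoMT}, combined with \cite[Theorem 2.1.8]{Landoo-cat} identifying categorical fibrations between $\infty$-categories with such isofibrations. So you must either cite a statement of this form (e.g.\ the one in Ruit's thesis, or the analogous characterization of fibrations between fibrant objects coming from the categorical-pattern machinery of \cite{HA}, Appendix B) or supply the retract/deformation argument proving it; your subsequent case-by-case horn analysis, which is fine as far as it goes, only establishes the RLP against marked anodyne maps and does not close this gap. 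A minor additional point: in your case (ii) the hypothesis needed is that the last edge of the horn is an equivalence in $\cal C$ (not merely that its image in $\cal D$ is), after which special outer horn filling for inner fibrations applies.
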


\begin{proof}
Since the forgetful functor $\SS_{\cart}^{+}\to\SS_{{\rm Joyal}}$
is right Quillen (\cite[Proposition 3.1.5.3]{HTT}), the implication
(1)$\implies$(2) is obvious. The converse follows from \cite[Proposition 1.1.7]{JacoMT}
and \cite[Theorem 2.1.8]{Landoo-cat}.
\end{proof}
\begin{prop}
\label{prop:cart_vs_mCSS}The adjunctions
\begin{align*}
\pr{p_{1}^{+}}^{*} & :\SS_{{\rm cart}}^{+}\adj\BS_{\CSS}^{+}:\pr{i_{1}^{+}}^{*}\\
\pr{t^{+}}_{!} & :\BS_{\CSS}^{+}\adj\SS_{{\rm cart}}^{+}:\pr{t^{+}}^{!}
\end{align*}
are Quillen equivalences.
\end{prop}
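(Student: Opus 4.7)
The plan is to first verify that both adjunctions are Quillen adjunctions, then deduce Quillen equivalence from the unmarked analogues of Joyal--Tierney~\cite[Theorems 4.11 and 4.22]{JT07}. Since every object in $\SS^{+}_{\cart}$ and $\BS^{+}_{\CSS}$ is cofibrant, it suffices by \cite[Corollary A.3.7.2]{HTT} to check that each left adjoint preserves monomorphisms (immediate from the definitions) and that each right adjoint preserves fibrant objects. For $(i_1^+)^*$, I would identify $(i_1^+)^*\cal C^\natural$ with $(i_1^*\cal C)^\natural$: by \cite[Theorem 4.11]{JT07}, $i_1^*\cal C$ is an $\infty$-category, and the marking inherited from $\cal C_{\hoeq}$ coincides with the equivalences in $i_1^*\cal C$ because the $0$-simplices of $\cal C_{\hoeq}$ are precisely the homotopy equivalences in the ordinary sense. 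A parallel identification $(t^+)^!\cal D^\natural \cong (t^!\cal D)^\natural$ handles the second adjunction via \cite[Theorem 4.22]{JT07}.

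To establish the Quillen equivalences, I will use the criterion that a Quillen adjunction $L \dashv R$ is a Quillen equivalence iff $L$ reflects weak equivalences between cofibrant objects and the derived counit $LRB \to B$ is a weak equivalence for every fibrant $B$. For $L = (p_1^+)^*$: suppose $(p_1^+)^* f$ is a marked equivalence; then for every complete Segal space $\cal D$ the induced map on $\Map(-, \cal D^\natural)$ is a weak homotopy equivalence, hence by adjunction so is the induced map on $\Map(-, (i_1^*\cal D)^\natural)$. Since $i_1^*$ is essentially surjective onto $\infty$-categories up to categorical equivalence (\cite[Theorem 4.11]{JT07}) and $(-)^\natural$ preserves weak equivalences between fibrant objects, the same conclusion holds with every $\infty$-category in place of $i_1^*\cal D$, whence $f$ is a cartesian equivalence. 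For the counit, we must show $(p_1^+)^*(i_1^+)^*\cal C^\natural \to \cal C^\natural$ is a marked equivalence. Mapping into $\cal D^\natural$ and using that maps of complete Segal spaces (resp.\ of $\infty$-categories) automatically preserve equivalences---so that $\Map(\cal C^\natural, \cal D^\natural) = \Map(\cal C, \cal D)$ and similarly after applying $(i_1^+)^*$---this reduces via adjunction to showing that $\Map(\cal C, \cal D) \to \Map(i_1^*\cal C, i_1^*\cal D)$ is a weak equivalence, which is precisely the full faithfulness on fibrant objects of the right Quillen equivalence $i_1^*$ of \cite[Theorem 4.11]{JT07}.

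The same two-step argument applies verbatim to $(t^+)_! \dashv (t^+)^!$, using \cite[Theorem 4.22]{JT07} in place of \cite[Theorem 4.11]{JT07} together with the identification $(t^+)^!\cal D^\natural \cong (t^!\cal D)^\natural$. The main technical obstacle is justifying these identifications of the right adjoints on fibrant objects; in particular, for $(t^+)^!$ one must verify that the markings, defined via maps out of $(\Delta^1)^\sharp \times (\Delta^m)^\sharp$, correspond exactly to the simplicial subset $(t^!\cal D)_{\hoeq}$ of homotopy equivalences of the complete Segal space $t^!\cal D$.
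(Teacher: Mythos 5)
Your Quillen-adjunction step contains the real gap. The criterion you invoke---left adjoint preserves cofibrations, right adjoint preserves fibrant objects---is not sufficient for an arbitrary adjunction, even when every object is cofibrant: for the Kan--Quillen model structure on $\SS$, the adjunction $\sk_{0}\dashv\Cosk_{0}$ has a left adjoint preserving monomorphisms and a right adjoint landing in Kan complexes, yet $\sk_{0}$ sends the anodyne map $\{0\}\subset\Delta^{1}$ to a non-equivalence. What \cite[Corollary A.3.7.2]{HTT} actually requires is that the adjunction be \emph{simplicial}, and this fails for $\pr{p_{1}^{+}}^{*}\dashv\pr{i_{1}^{+}}^{*}$ with the enrichments in play: one computes $\pr{p_{1}^{+}}^{*}\pr{K^{\sharp}\times\overline{A}}_{n,m}=K_{n}\times A_{n}$ whereas $\pr{K\otimes\pr{p_{1}^{+}}^{*}\overline{A}}_{n,m}=K_{m}\times A_{n}$. (For $\pr{t^{+}}_{!}\dashv\pr{t^{+}}^{!}$ the compatibility does hold, so that half is salvageable.) The same failure undermines your later ``by adjunction'' identifications of mapping spaces, both in the reflection step and in the counit computation: $\Map\pr{\pr{p_{1}^{+}}^{*}\overline{X},\cal D^{\natural}}$ agrees with $\Map^{\sharp}\pr{\overline{X},\pr{i_{1}^{*}\cal D}^{\natural}}$ on vertices by the ordinary adjunction, but not, without further argument, as simplicial sets. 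The correct route---and the one the paper takes---is \cite[Proposition 7.15]{JT07}: one must show that $\pr{i_{1}^{+}}^{*}$ and $\pr{t^{+}}^{!}$ preserve \emph{fibrations between} fibrant objects, which is strictly more than preserving fibrant objects. In particular the identification $\pr{t^{+}}^{!}\pr{\cal D^{\natural}}\cong\pr{t^{!}\cal D}^{\natural}$ that you defer as ``the main technical obstacle'' is part of what must be proved, not an optional refinement. The paper handles $\pr{i_{1}^{+}}^{*}$ by combining Proposition \ref{prop:cart_fib_vs_joy_fib} with the observation that forgetting markings after $\pr{i_{1}^{+}}^{*}$ equals the composite $\BS_{\CSS}^{+}\xrightarrow{\Unm}\BS_{\CSS}\xrightarrow{i_{1}^{*}}\SS_{{\rm Joyal}}$ of right Quillen functors, and handles $\pr{t^{+}}^{!}$ by an explicit $\Map^{\sharp}$-computation using that $\SS_{\cart}^{+}$ is simplicial.

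For the Quillen-equivalence step your criterion (reflection of weak equivalences plus derived counit) is sound in principle, but beyond the compatibility problem above, the appeal to ``full faithfulness on fibrant objects'' of $i_{1}^{*}$ at the level of simplicial mapping spaces needs justification: the Joyal model structure is not simplicial, so one must argue that $\Map^{\sharp}\pr{\pr{i_{1}^{*}\cal C}^{\natural},\pr{i_{1}^{*}\cal D}^{\natural}}$ computes the derived mapping space and that the comparison map is the one induced by $i_{1}^{*}$. The paper avoids all of this with a short argument you should adopt: since $\pr{i_{1}^{+}}^{*}\circ\pr{t^{+}}^{!}=\id_{\SS^{+}}$, the two-out-of-three property for Quillen equivalences reduces both assertions to $\pr{i_{1}^{+}}^{*}$ being a right Quillen equivalence, and this follows because its composite with the right Quillen equivalence $\SS_{\cart}^{+}\to\SS_{{\rm Joyal}}$ (forgetting markings) coincides with the composite of the right Quillen equivalences $\Unm$ and $i_{1}^{*}$ (Proposition \ref{prop:bs_vs_bs^+} and \cite[Proposition 4.11]{JT07}).
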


\begin{proof}
First we show that the adjunctions are Quillen adjunctions. According
to \cite[Proposition 7.15]{JT07}, it suffices to prove the following:
\begin{enumerate}
\item The functor $\pr{p_{1}^{+}}^{*}$ preserves cofibrations.
\item The functor $\pr{i_{1}^{+}}^{*}$ preserves fibrations of fibrant
objects.
\item The functor $\pr{t^{+}}_{!}$ preserves cofibrations.
\item The functor $\pr{t^{+}}^{!}$ preserves fibrations of fibrant objects.
\end{enumerate}
Assertion (1) is obvious. For assertion (2), notice that the functor
$\pr{i_{1}^{+}}^{*}$ preserves fibrant objects. Therefore, by Proposition
\ref{prop:cart_fib_vs_joy_fib}, it suffices to show that the composite
\[
\BS_{\CSS}^{+}\xrightarrow{\pr{i_{1}^{+}}^{*}}\SS_{{\rm cart}}^{+}\xrightarrow{\text{forget}}\SS_{{\rm Joyal}}
\]
preserves fibrations. But this composite is even a right Quillen equivalence,
for it factors as a composite
\[
\BS_{\CSS}^{+}\xrightarrow{\Unm}\BS_{\CSS}\xrightarrow{i_{1}^{*}}\SS_{{\rm Joyal}}
\]
of right Quillen equivalences (Proposition \ref{prop:bs_vs_bs^+},
\cite[Proposition 4.11]{JT07}). This proves assertion (2).

Assertion (3) is obvious. For assertion (4), let $\cal C$ and $\cal D$
be $\infty$-categories and let $p:\cal C\to\cal D$ be a categorical
fibration. We must show that the map $\pr{t^{+}}^{!}\pr{\cal C^{\natural}}\to\pr{t^{+}}^{!}\pr{\cal D^{\natural}}$
is a fibration of $\BS_{\CSS}^{+}$. By Theorem \ref{thm:mCSS_model_structure},
it suffices to show that its underlying morphism of bisimplicial sets
is a fibration of $\BS_{\CSS}$. Unwinding the definitions, we must
show that, for every $n\geq0$, the map
\[
\Map^{\sharp}\pr{\pr{\Delta^{n}}^{\flat},\cal C^{\natural}}\to\Map^{\sharp}\pr{\pr{\partial\Delta^{n}}^{\flat},\cal C^{\natural}}\times_{\Map^{\sharp}\pr{\pr{\partial\Delta^{n}}^{\flat},\cal D^{\natural}}}\Map^{\sharp}\pr{\pr{\Delta^{n}}^{\flat},\cal D^{\natural}}
\]
is a Kan fibration. This follows from the fact that the cartesian
model structure is simplicial \cite[Corollary 3.1.4.4]{HTT}

We now complete the proof by showing that the adjunctions are Quillen
equivalences. Since the composite $\pr{i_{1}^{+}}^{*}\circ\pr{t^{+}}^{!}$
is the identity functor of $\SS^{+}$, it suffices to show that $\pr{i_{1}^{+}}^{*}$
is a Quillen equivalence. But this is clear from the proof of assertion
(2). The proof is now complete.
\end{proof}
\begin{prop}
\label{prop:Reedy_vs_mCSS}The adjunction
\[
\id:\BS_{\Reedy}^{+}\adj\BS_{\CSS}^{+}:\id
\]
is a Quillen adjunction.
\end{prop}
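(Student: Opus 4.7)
The plan is to apply the Joyal--Tierney criterion \cite[Proposition 7.15]{JT07}: the identity is a Quillen adjunction provided that (i) the left adjoint preserves cofibrations, and (ii) the right adjoint preserves fibrations between fibrant objects.

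For (i), one identifies both classes of cofibrations with monomorphisms of marked bisimplicial sets. For $\BS^{+}_{\CSS}$ this is part of Theorem~\ref{thm:mCSS_model_structure}. For $\BS^{+}_{\Reedy}$, recall that a Reedy cofibration in $(\SS^{+}_{\cart})^{\Del^{\op}}$ is a map whose latching maps are cofibrations in $\SS^{+}_{\cart}$. Since cofibrations in $\SS^{+}_{\cart}$ are exactly monomorphisms and latching maps are always monomorphisms, standard Reedy theory implies that Reedy cofibrations coincide with levelwise monomorphisms. Under the identification $\BS^{+}\cong(\SS^{+})^{\Del^{\op}}$, levelwise monomorphisms correspond precisely to monomorphisms of marked bisimplicial sets.

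For (ii), let $f:\cal C^{\natural}\to\cal D^{\natural}$ be a fibration between fibrant objects of $\BS^{+}_{\CSS}$. By Theorem~\ref{thm:mCSS_model_structure}(4), its underlying map $\cal C\to\cal D$ is a fibration in $\BS_{\CSS}$, and since the complete Segal space model structure is a left Bousfield localization of the Reedy model structure on $\BS$ \cite[Theorem 7.2]{Rezk01}, the matching maps of the underlying map are Kan fibrations. I must promote this to the statement that each matching map of $f$ is a fibration in $\SS^{+}_{\cart}$. By Proposition~\ref{prop:cart_fib_vs_joy_fib}, this reduces to checking that the source and target of each matching map are fibrant in $\SS^{+}_{\cart}$, for then the underlying categorical-fibration condition is automatic. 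Fibrancy of each level $(\cal C_{\ast,m},(\cal C_{\hoeq})_{m})$, and similarly of each matching object, is verified by showing that the underlying simplicial set is a quasi-category and that its equivalences are precisely the marked edges; this uses the Segal and completeness conditions for $\cal C$ together with the fact that $\cal C_{\hoeq}\subset\cal C_{1}$ is a union of components (Remark~\ref{rem:map_sat}), which is what allows the marking data to be preserved under the matching construction.

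\textbf{Main obstacle.} The technical crux is establishing the Reedy fibrancy of $\cal C^{\natural}$ in $(\SS^{+}_{\cart})^{\Del^{\op}}$. Since a CSS is naturally Reedy fibrant only in one bisimplicial direction (Rezk's), verifying that the transposed direction yields fibrant levels and matching objects in $\SS^{+}_{\cart}$ requires a careful analysis of the matching construction and of the way the homotopy-equivalence marking interacts with the Segal/completeness structure of $\cal C$.
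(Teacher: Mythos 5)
Your overall strategy (apply the criterion of \cite[Proposition 7.15]{JT07} directly to the identity adjunction) forces you to prove that every fibration between fibrant objects of $\BS_{\CSS}^{+}$ is a Reedy fibration in $\pr{\SS_{\cart}^{+}}^{\Del^{\op}}$, and that is exactly where your argument has a genuine gap. Knowing that the underlying map $\cal C\to\cal D$ is a fibration of $\BS_{\CSS}$ only gives Kan-fibration matching maps in the \emph{column} (Rezk) direction, since Rezk's model structure is a localization of the Reedy structure taken over the Kan--Quillen model structure; it says nothing directly about the relative matching maps in the \emph{row} direction, which is what Reedy fibrancy over $\SS_{\cart}^{+}$ requires. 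Moreover, your claim that ``the underlying categorical-fibration condition is automatic'' once the relevant objects are fibrant in $\SS_{\cart}^{+}$ has it backwards: Proposition \ref{prop:cart_fib_vs_joy_fib} reduces ``fibration of $\SS_{\cart}^{+}$'' to ``categorical fibration of underlying simplicial sets'' for maps of the form $\cal E^{\natural}\to\cal F^{\natural}$, but the categorical-fibration condition is then precisely what remains to be proved, and it is not even clear that the matching objects (and the fiber product in the relative matching map) carry the natural marking of an $\infty$-category. The paragraph you label ``Main obstacle'' is the entire content of the statement you need; as written it is left unproved, and settling it amounts to redoing nontrivial Joyal--Tierney comparison results in the transposed direction.

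The paper sidesteps this issue. Since $\BS_{\Reedy}^{+}$ and $\BS_{\CSS}^{+}$ have the same cofibrations (the monomorphisms), it suffices to show that Reedy weak equivalences are marked equivalences. Because every object is cofibrant and $\pr{t^{+}}_{!}:\BS_{\CSS}^{+}\rightleftarrows\SS_{\cart}^{+}:\pr{t^{+}}^{!}$ is a Quillen equivalence (Proposition \ref{prop:cart_vs_mCSS}), marked equivalences are exactly the maps carried to cartesian equivalences by $\pr{t^{+}}_{!}$, so it is enough that $\pr{t^{+}}_{!}:\BS_{\Reedy}^{+}\to\SS_{\cart}^{+}$ be left Quillen; by \cite[Proposition 7.15]{JT07} this reduces to checking that for a fibration $p:\cal C^{\natural}\to\cal D^{\natural}$ of fibrant marked simplicial sets the map
\[
\pr{\cal C^{\natural}}^{\pr{\Delta^{m}}^{\sharp}}\to\pr{\cal D^{\natural}}^{\pr{\Delta^{m}}^{\sharp}}\times_{\pr{\cal D^{\natural}}^{\pr{\partial\Delta^{m}}^{\sharp}}}\pr{\cal C^{\natural}}^{\pr{\partial\Delta^{m}}^{\sharp}}
\]
is a fibration of $\SS_{\cart}^{+}$, which is immediate from the pushout-product property of the cartesian model structure \cite[Corollary 3.1.4.3]{HTT}. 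If you wish to salvage your direct approach, you would need an independent proof that $\BS_{\CSS}^{+}$-fibrations between fibrant objects are Reedy fibrations; in this paper that fact only emerges a posteriori, from the Bousfield localization statement in Theorem \ref{thm:relations}(2).
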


\begin{proof}
The model categories $\BS_{\Reedy}^{+}$ and $\BS_{\CSS}^{+}$ share
the same class of monomorphisms, so it suffices to show that every
weak equivalence of $\BS_{{\rm Reedy}}^{+}$ is a weak equivalence
of $\BS_{\CSS}^{+}$. Since every object of $\BS_{\CSS}^{+}$ is cofibrant
and the adjunction
\[
\pr{t^{+}}_{!}:\BS_{\CSS}^{+}\adj\SS_{\cart}^{+}:\pr{t^{+}}^{!}
\]
is a Quillen adjunction, the weak equivalences of $\BS_{\CSS}^{+}$
are precisely the morphisms whose images under $\pr{t^{+}}_{!}$ are
cartesian equivalences. It will therefore suffice to show that the
adjunction
\[
\pr{t^{+}}_{!}:\BS_{\Reedy}^{+}\adj\SS_{\cart}^{+}:\pr{t^{+}}^{!}
\]
is a Quillen adjunction. Clearly the functor $\pr{t^{+}}_{!}$ preserves
cofibrations. Therefore, by \cite[Proposition 7.15]{JT07}, we are
reduced to showing that $\pr{t^{+}}^{!}$ carries fibrations of fibrant
objects to Reedy fibrations.

Let $p:\cal C^{\natural}\to\cal D^{\natural}$ be a fibration of fibrant
objects of $\SS^{+}$. We must show that the map $\pr{t^{+}}^{!}\pr p:\pr{t^{+}}^{!}\pr{\cal C^{\natural}}\to\pr{t^{+}}^{!}\pr{\cal D^{\natural}}$
is a Reedy fibration. Unwinding the definitions, this amounts to showing
that, for each $m\geq0$, the map
\[
\pr{\cal C^{\natural}}^{\pr{\Delta^{m}}^{\sharp}}\to\pr{\cal D^{\natural}}^{\pr{\Delta^{m}}^{\sharp}}\times_{\pr{\cal D^{\natural}}^{\pr{\partial\Delta^{m}}^{\sharp}}}\pr{\cal C^{\natural}}^{\pr{\partial\Delta^{m}}^{\sharp}}
\]
is a fibration of $\SS_{\cart}^{+}$. This follows from the fact that
the cartesian model structure is cartesian \cite[Corollary 3.1.4.3]{HTT}.
\end{proof}
\begin{prop}
\label{prop:mCSS_fib_cat_const}A marked bisimplicial set is fibrant
in the marked complete Segal space structure if and only if it is
Reedy fibrant and categorically constant.
\end{prop}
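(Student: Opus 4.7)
The proof proceeds by verifying both implications, leveraging the Quillen adjunctions established earlier in this section.

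For the forward direction, suppose $\overline{X}$ is fibrant in $\BS_{\CSS}^+$. Reedy fibrancy is immediate: by Proposition \ref{prop:Reedy_vs_mCSS} the identity functor from $\BS_{\Reedy}^+$ to $\BS_{\CSS}^+$ is left Quillen, so its right adjoint preserves fibrant objects. For categorical constancy, Proposition \ref{prop:mCSS_fibrant} gives $\overline{X} = \cal C^\natural$ for some complete Segal space $\cal C$. The unique map $\Delta^n \to \Delta^0$ is a weak homotopy equivalence of simplicial sets; since the marked complete Segal space model structure is simplicial (Proposition \ref{prop:marked_CSS_simplicial}), cotensoring produces a weak equivalence $\cal C^\natural \to (\cal C^\natural)^{\Delta^n}$ between fibrant objects of $\BS_{\CSS}^+$. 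Applying the right Quillen functor $\pr{i_1^+}^*: \BS_{\CSS}^+ \to \SS_{\cart}^+$ of Proposition \ref{prop:cart_vs_mCSS} then yields a cartesian equivalence. Using Proposition \ref{prop:exponential_fibrant} to identify $(\cal C^\natural)^{\Delta^n}$ with $(\cal C^{\Delta^0 \btimes \Delta^n})^\natural$ and unwinding the definitions, this cartesian equivalence is precisely the map $(\cal C_{\ast, 0}, (\cal C_{\hoeq})_0) \to (\cal C_{\ast, n}, (\cal C_{\hoeq})_n)$, as required.

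For the reverse direction, suppose $\overline{X} = (X, S)$ is Reedy fibrant and categorically constant. The strategy is to show that $\overline{X} = X^\natural$ with $X$ a complete Segal space, which will complete the proof via Proposition \ref{prop:mCSS_fibrant}. Reedy fibrancy forces each row $(X_{\ast, n}, S_n)$ to be fibrant in $\SS_{\cart}^+$, hence of the form $\cal D_n^\natural$, and Proposition \ref{prop:cart_fib_vs_joy_fib} implies that the underlying matching maps are categorical fibrations, so $X$ is Reedy fibrant in the unmarked Joyal--Tierney sense. Categorical constancy of $\overline{X}$ forgets to the statement that $X_{\ast, 0} \to X_{\ast, n}$ is a categorical equivalence, because cartesian equivalences between fibrant objects in $\SS_{\cart}^+$ descend to categorical equivalences of the underlying $\infty$-categories. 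Invoking \cite[Theorem 4.5]{JT07} then shows that $X$ is a complete Segal space.

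It remains to identify the markings $S = X_{\hoeq}$. Reedy fibrancy provides that $S_n$ is the set of equivalences in the $\infty$-category $X_{\ast, n}$. Observe that the underlying $\infty$-category at level $0$ of the complete Segal space $X^{\Delta^0 \btimes \Delta^n}$ is precisely $X_{\ast, n}$, so applying Proposition \ref{prop:pointwise_equivalence} characterizes an edge $e \in X_{1, n}$ as an equivalence in $X_{\ast, n}$ if and only if its restriction to each vertex of $\Delta^n$ is a homotopy equivalence of $X$. Since $X_{\hoeq}$ is a union of connected components of $X_1$, this is equivalent to $e \in (X_{\hoeq})_n$, whence $S_n = (X_{\hoeq})_n$. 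The main technical difficulty lies in this last identification of markings, which depends on interpreting $X_{\ast, n}$ as the underlying $\infty$-category of an appropriate internal hom in order to invoke Proposition \ref{prop:pointwise_equivalence}.
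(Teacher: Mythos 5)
Your proof is correct and, for the substantive implication (Reedy fibrant and categorically constant implies fibrant), it is essentially the paper's argument: pass to the rows, which are fibrant in $\SS_{\cart}^{+}$, forget to the Joyal model structure to see that $X$ is fibrant and categorically constant in the sense of \cite{JT07}, invoke \cite[Theorem 4.5]{JT07} to conclude that $X$ is a complete Segal space, and then identify $S$ with $X_{\hoeq}$. You deviate only in supporting details. For the identification of markings you argue through Proposition \ref{prop:pointwise_equivalence}, whereas the paper quotes the observation made in the proof of Proposition \ref{prop:mbe_ext} (via \cite[Theorem 6.2]{Rezk01}) that $X_{\hoeq,m}$ is precisely the set of equivalences of the $\infty$-category $X_{\ast,m}$; both work. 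For the converse the paper only says it is proved ``similarly, using Proposition \ref{prop:cart_fib_vs_joy_fib}'', while you argue via the simplicial structure: cotensoring the fibrant object $\cal C^{\natural}$ along $\Delta^{n}\to\Delta^{0}$ gives a weak equivalence (this is exactly Corollary \ref{cor:diagonal_weq}, which you could simply cite), and applying the right Quillen functor $\pr{i_{1}^{+}}^{*}$ yields the desired cartesian equivalence of rows; this is a valid alternative. One small point to make explicit: when you use Proposition \ref{prop:cart_fib_vs_joy_fib} to see that the matching maps of $X$ are categorical fibrations, that proposition applies to maps of naturally marked $\infty$-categories, so you need to know that the matching objects of the Reedy fibrant object $\overline{X}$ are themselves fibrant (hence of the form $\cal D^{\natural}$) and that forgetting markings commutes with matching objects --- a standard fact, but alternatively you can argue as the paper does, using that the forgetful functor $\SS_{\cart}^{+}\to\SS_{{\rm Joyal}}$ is right Quillen and therefore preserves all fibrations, not just those between fibrant objects.
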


\begin{proof}
Let $\overline{X}=\pr{X,S}$ be a marked bisimplicial set. Suppose
that $\overline{X}$ is Reedy fibrant and categorically constant.
Since Reedy fibrancy implies objectwise fibrancy, for each $m\geq0$,
the $m$th row $\pr{X_{\ast,m},S_{m}}\in\SS_{\cart}^{+}$ of $\overline{X}$
is fibrant. Moreover, the forgetful functor $\SS_{\cart}^{+}\to\SS_{{\rm Joyal}}$
is right Quillen \cite[Proposition 3.1.5.3]{HTT}, so it preserves
fibrations and weak equivalences of fibrant objects. It follows that
the bisimplicial set $X$ is fibrant in the horizontal model structure
of \cite[Proposition 2.10]{JT07} and moreover that it is categorically
constant in the sense of \cite[Definition 2.7]{JT07}. It follows
from \cite[Theorem 4.5]{JT07} that $X$ is a complete Segal space.
Since $S_{m}$ is the set of equivalences of the $\infty$-category
$X_{\ast,m}$, we find that $S=X_{\hoeq}$. Hence $\overline{X}$
is fibrant in the complete Segal space model structure. The reverse
implication can be proved similarly, using Proposition \ref{prop:cart_fib_vs_joy_fib}.
\end{proof}
\begin{proof}
[Proof of Theorem \ref{thm:relations}]Assertion (1) follows from
Propositions \ref{prop:bs_vs_bs^+}, \ref{prop:cart_vs_mCSS}, and
\ref{prop:Reedy_vs_mCSS}. Assertion (2) follows from assertion (1)
and Proposition \ref{prop:mCSS_fibrant}.
\end{proof}

\section{\label{sec:localization_thm}Main Result}

The goal of this section is to define the \textit{classification diagram
functor} $N:\SS^{+}\to\BS$ (Definition \ref{def:classification_diagram})
and show that it preserves and reflects weak equivalences (Theorem
\ref{thm:main}). We will also look at some of the consequences of
this.

We begin by generalizing Rezk's classification diagram to the setting
of marked simplicial sets.
\begin{defn}
\label{def:classification_diagram}Let $\overline{X}$ be a marked
simplicial set. Its \textbf{classification diagram} $N\pr{\overline{X}}$
is the underlying bisimplicial set of $\pr{t^{+}}^{!}\pr{\overline{X}}$.
The assignment $\overline{X}\mapsto N\pr{\overline{X}}$ defines a
functor $N:\SS^{+}\to\BS$.
\end{defn}

Here is the main result of this section.
\begin{thm}
\label{thm:main}Let $f:\overline{X}\to\overline{Y}$ be a morphism
of marked simplicial sets. The following conditions are equivalent:
\begin{enumerate}
\item The map $f$ is a weak equivalence of $\SS_{\cart}^{+}$.
\item The map $\pr{t^{+}}^{!}\pr f$ is a weak equivalence of $\BS_{\CSS}^{+}$.
\item The map $N\pr f$ is a weak equivalence of $\BS_{\CSS}$.
\end{enumerate}
\end{thm}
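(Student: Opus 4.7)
My strategy is to establish $(1)\Leftrightarrow(2)$ and $(2)\Leftrightarrow(3)$ separately, each by leveraging one of the Quillen equivalences of Theorem~\ref{thm:relations}. The central tool is the following characterization, implicit in the proof of Proposition~\ref{prop:Reedy_vs_mCSS}: since the Quillen equivalence $(t^{+})_{!}\dashv(t^{+})^{!}$ relates model categories in which every object is cofibrant, a morphism $g$ in $\BS^{+}$ is a weak equivalence of $\BS_{\CSS}^{+}$ if and only if $(t^{+})_{!}(g)$ is a cartesian equivalence. A parallel, elementary argument shows that a morphism $h$ in $\BS$ is a Rezk equivalence if and only if $h^{\flat}$ is a weak equivalence of $\BS_{\CSS}^{+}$: any map $Z\to\mathcal{C}$ to a complete Segal space automatically carries degenerate edges to homotopy equivalences, so by Remark~\ref{rem:map_sat} we have $\Map(Z^{\flat},\mathcal{C}^{\natural})=\Map(Z,\mathcal{C})$.

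For $(1)\Leftrightarrow(2)$: The first characterization reformulates $(2)$ as the assertion that $(t^{+})_{!}\bigl((t^{+})^{!}(f)\bigr)$ is a cartesian equivalence. Applying the $2$-out-of-$3$ property to the naturality square of the counit $\varepsilon:(t^{+})_{!}\circ(t^{+})^{!}\Rightarrow\id$, the equivalence $(1)\Leftrightarrow(2)$ reduces to the claim that $\varepsilon_{\overline{W}}$ is a cartesian equivalence for every $\overline{W}\in\SS^{+}$. When $\overline{W}=\mathcal{C}^{\natural}$ is fibrant this is immediate from the Quillen equivalence (Proposition~\ref{prop:cart_vs_mCSS}); the general case is derived from the fibrant case by choosing a functorial fibrant replacement $\overline{W}\to\mathcal{C}^{\natural}$ and running another $2$-out-of-$3$, conditional on the preservation lemma that $(t^{+})^{!}$ carries cartesian equivalences to weak equivalences of $\BS_{\CSS}^{+}$.

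For $(2)\Leftrightarrow(3)$: The second characterization reformulates $(3)$ as the assertion that $N(f)^{\flat}=\bigl(\Unm\bigl((t^{+})^{!}(f)\bigr)\bigr)^{\flat}$ is a weak equivalence of $\BS_{\CSS}^{+}$. A parallel argument, now using the counit $(-)^{\flat}\circ\Unm\Rightarrow\id_{\BS^{+}}$, reduces this to showing that this counit is a weak equivalence of $\BS_{\CSS}^{+}$ when evaluated at an object of the form $(t^{+})^{!}(\overline{W})$. Once again, the general case is reduced to the fibrant one via a fibrant-replacement argument that invokes the same sort of preservation lemma.

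\textbf{Main obstacle.} The technical heart of the proof is the preservation lemma: that $(t^{+})^{!}$ sends cartesian equivalences of $\SS^{+}$ to weak equivalences of $\BS_{\CSS}^{+}$, together with its analog for the counit of $(-)^{\flat}\dashv\Unm$. My plan is to reduce these, via saturation, to a set of generating trivial cofibrations of $\SS_{\cart}^{+}$ (cf.\ \cite[$\S$3.1.3]{HTT}), and then to verify each generator case-by-case using the characterization of $\BS_{\CSS}^{+}$-equivalences via $(t^{+})_{!}$ combined with the explicit formula $(t^{+})_{!}(X,S)=(\diag X,S_{1})$. I expect the markings-related generators to be the most delicate, echoing the role of generators of types (D) and (E) in the proof of Proposition~\ref{prop:mbe_ext}.
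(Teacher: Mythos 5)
Your formal reductions are sound as far as they go: the characterization of $\BS_{\CSS}^{+}$-equivalences as the maps sent to cartesian equivalences by $\pr{t^{+}}_{!}$ is correct (it is stated in the proof of Proposition \ref{prop:Reedy_vs_mCSS}), the characterization of Rezk equivalences via $\pr -^{\flat}$ follows from Proposition \ref{prop:bs_vs_bs^+}, and the counit-plus-two-out-of-three bookkeeping, including the fibrant cases of both counits, is fine. But all of the substance is deferred to your ``preservation lemma'' that $\pr{t^{+}}^{!}$ carries cartesian equivalences to weak equivalences of $\BS_{\CSS}^{+}$ (together with its analog needed for $(2)\Leftrightarrow(3)$), and the strategy you propose for it does not work. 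A reduction ``via saturation'' to generating trivial cofibrations of $\SS_{\cart}^{+}$ requires the class of maps $g$ for which $\pr{t^{+}}^{!}\pr g$ is a weak equivalence to be closed under pushouts and transfinite compositions; since $\pr{t^{+}}^{!}$ is a right adjoint, given levelwise by $\SS^{+}\pr{\pr{\Delta^{n}}^{\flat}\times\pr{\Delta^{m}}^{\sharp},-}$, it turns colimits into limits, so this class has no reason to be saturated and checking generators yields nothing about general trivial cofibrations. (Moreover, no explicit generating set of trivial cofibrations of $\SS_{\cart}^{+}$ is available in \cite{HTT}; the marked anodyne maps are a proper subclass.) This is precisely where the paper argues differently: by Theorem \ref{thm:relations}(2) it suffices that $\pr{t^{+}}^{!}\pr f$ be a Reedy, i.e.\ rowwise, equivalence, and the $m$th row of $\pr{t^{+}}^{!}\pr{\overline{X}}$ is the cotensor $\overline{X}^{\pr{\Delta^{m}}^{\sharp}}$, so the rowwise claim follows from Lemma \ref{lem:simplicial_htpy}, Corollary \ref{cor:diagonal_weq}, and two-out-of-three, with no cellular induction at all.

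The $(2)\Leftrightarrow(3)$ half has a second, independent gap. Your fibrant-replacement reduction of the counit statement (that $N\pr{\overline{W}}^{\flat}\to\pr{t^{+}}^{!}\pr{\overline{W}}$ is a marked equivalence) is circular: the top arrow of your naturality square is $N\pr j^{\flat}$ for a fibrant replacement $j:\overline{W}\to\cal C^{\natural}$, and knowing that this is a marked equivalence is an instance of $(1)\Rightarrow(3)$, the statement being proved. Nor can you fall back on a general fact, since $\Unm:\BS_{\CSS}^{+}\to\BS_{\CSS}$ does not preserve weak equivalences: the map $\pr{\Delta^{1}}^{\sharp}\btimes\Delta^{0}\to J^{\sharp}\btimes\Delta^{0}$ is a marked bianodyne extension of type (E), but its underlying map is $p_{1}^{*}$ applied to $\Delta^{1}\to J$, which is not a Rezk equivalence. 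So the restriction to objects in the image of $\pr{t^{+}}^{!}$ is essential, and some genuinely combinatorial input is unavoidable; in the paper this is assertion $(\ast)$, that every map of bisimplicial sets $N\pr{\overline{A}}\to\cal C$ automatically respects markings, proved by the degenerate-square argument producing a path in $\cal C_{1}$ from $\varphi_{1,0}\pr{\alpha}$ to $\varphi_{1,0}\pr{1_{b}}$ and using that $\cal C_{\hoeq}\subset\cal C_{1}$ is a union of components. Your proposal never supplies this step or any substitute for it, and it is the mathematical heart of $(2)\Rightarrow(3)$.
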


The main ingredients of the proof of Theorem \ref{thm:main} are Theorem
\ref{thm:relations}, an observation on the interaction of vertical
and horizontal directions of bisimplicial sets, and the next lemma.
\begin{lem}
\label{lem:simplicial_htpy}Let $\cal M$ be a simplicial model category,
let $X,Y\in\cal M$ be objects, and let $h:X\to Y^{\Delta^{1}}$ be
a morphism. For each $\varepsilon=0,1$, let $h_{\varepsilon}$ denote
the composite $h_{\varepsilon}:X\to Y^{\Delta^{1}}\to Y^{\{\varepsilon\}}\cong Y$.
The maps $h_{0}$ and $h_{1}$ represent the same morphism in the
homotopy category of the underlying model cateogry of $\cal M$.
\end{lem}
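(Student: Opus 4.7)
The natural strategy is to exhibit $h$, after suitable replacement, as a right homotopy between $h_0$ and $h_1$ using $Y^{\Delta^{1}}$ as a path object. The simplicial model category axioms (the pushout–product axiom applied to $\sf{sSet}_{\mathrm{Quillen}}$ and $\cal M$) imply that whenever $Y$ is fibrant, the map $Y \cong Y^{\Delta^{0}} \to Y^{\Delta^{1}}$ induced by $\Delta^{1} \to \Delta^{0}$ is a weak equivalence, and the map $Y^{\Delta^{1}} \to Y^{\partial \Delta^{1}} \cong Y \times Y$ induced by $\partial \Delta^{1} \hookrightarrow \Delta^{1}$ is a fibration. In particular, the factorization $Y \to Y^{\Delta^{1}} \to Y \times Y$ of the diagonal is a path object decomposition for any fibrant $Y$.

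To reduce to the fibrant case (and simultaneously arrange a cofibrant source), choose a cofibrant replacement $q : QX \to X$ and a fibrant replacement $j : Y \to RY$. Set
\[
\tilde h := j^{\Delta^{1}} \circ h \circ q : QX \longrightarrow (RY)^{\Delta^{1}}.
\]
For $\varepsilon \in \{0,1\}$, the composites $\tilde h_{\varepsilon} : QX \to (RY)^{\Delta^{1}} \to RY$ are equal to $j \circ h_{\varepsilon} \circ q$ by naturality of the cotensor in the simplicial variable. Since $q$ and $j$ are weak equivalences, $\tilde h_{\varepsilon}$ and $h_{\varepsilon}$ represent the same morphism in $\opn{Ho}(\cal M)$.

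By the first paragraph, $RY \to (RY)^{\Delta^{1}} \to RY \times RY$ is a path object for $RY$, so $\tilde h$ witnesses that $\tilde h_{0}$ and $\tilde h_{1}$ are right homotopic. Since $QX$ is cofibrant and $RY$ is fibrant, standard model category theory implies that right homotopic maps coincide in $\opn{Ho}(\cal M)$; hence $[\tilde h_{0}] = [\tilde h_{1}]$, and therefore $[h_{0}] = [h_{1}]$ in $\opn{Ho}(\cal M)$, as required.

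There is no genuine obstacle here — the only point requiring care is verifying that $Y^{\Delta^{1}}$ really is a path object when $Y$ is fibrant, and this is immediate from the enrichment axioms applied to the weak equivalence $\Delta^{1} \to \Delta^{0}$ and the cofibration $\partial \Delta^{1} \hookrightarrow \Delta^{1}$ in $\sf{sSet}_{\mathrm{Quillen}}$.
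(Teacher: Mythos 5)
Your proof is correct, but it runs dual to the paper's argument. You work on the cotensor side: fibrantly replace $Y$, observe that $RY \to (RY)^{\Delta^{1}} \to RY \times RY$ is a path object, and conclude via a right homotopy; this forces the extra step of transporting $h$ along $j^{\Delta^{1}}$ and checking compatibility with the evaluation maps by naturality of the cotensor. The paper instead passes to the adjoint $k:X\otimes\Delta^{1}\to Y$ and uses the tensor as a cylinder: after a cofibrant replacement of $X$, the two inclusions $X\otimes\{\varepsilon\}\to X\otimes\Delta^{1}$ are weak equivalences with a common retraction induced by $\Delta^{1}\to\Delta^{0}$, hence are identified in $\opn{Ho}\pr{\cal M}$, and composing with $k$ gives $[h_{0}]=[h_{1}]$. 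The paper's route needs only one replacement (of the source, where precomposition with a weak equivalence is harmless) and no fibrancy of $Y$; your route needs the fibrant replacement of $Y$, while the cofibrant replacement $QX$ is in fact superfluous for the direction you use, since a right homotopy through a path object always identifies the two maps in the homotopy category: the section $RY\to(RY)^{\Delta^{1}}$ is a weak equivalence equalizing the two evaluations. One small looseness: the weak-equivalence half of your path-object claim is not literally an instance of the pushout--product axiom applied to the weak equivalence $\Delta^{1}\to\Delta^{0}$; the quick correct justification is that $\{0\}\hookrightarrow\Delta^{1}$ is a trivial cofibration, so $(RY)^{\Delta^{1}}\to RY$ is a trivial fibration, and the section induced by $\Delta^{1}\to\Delta^{0}$ is then a weak equivalence by two-out-of-three. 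With that adjustment your argument is complete.
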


\begin{proof}
By choosing a weak equivalence $X'\to X$ with $X'$ cofibrant, we
may assume that $X$ is cofibrant. Let $k:X\otimes\Delta^{1}\to Y$
denote the adjoint of $h$, so that $k\vert X\otimes\{\varepsilon\}=h_{\varepsilon}$
for $\varepsilon=0,1$. Since $X$ is cofibrant, the definition of
simplicial model categories implies that the maps $X\otimes\{0\}\to X\otimes\Delta^{1}\ot X\otimes\{1\}$
are weak equivalences. Hence $h_{0}$ and $h_{1}$ represent the same
morphism in the underlying model category of $\cal M$.
\end{proof}
\begin{cor}
\label{cor:diagonal_weq}Let $\cal M$ be a simplicial model category
and let $X\in\cal M$ be an object. The map $i:X\to X^{\Delta^{n}}$
is a weak equivalence of $\cal M$.
\end{cor}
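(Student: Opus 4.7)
The plan is to show that $i$ becomes an isomorphism in $\mathrm{Ho}(\cal M)$ by exhibiting an explicit inverse and applying Lemma \ref{lem:simplicial_htpy}. Since being a weak equivalence is equivalent to becoming an isomorphism in the homotopy category, this suffices.

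First I would fix the two obvious maps. The map $i : X \to X^{\Delta^n}$ is induced by the unique projection $\pi : \Delta^n \to \Delta^0$, and I introduce $p : X^{\Delta^n} \to X$ as the map induced by the inclusion of the vertex $\{0\} \hookrightarrow \Delta^n$. Since $\pi \circ (\{0\} \hookrightarrow \Delta^n) = \id_{\Delta^0}$, we have $p \circ i = \id_X$ on the nose.

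The main step is to show that $i \circ p = \id_{X^{\Delta^n}}$ in the homotopy category. For this I would construct the poset map $H : \Delta^n \times \Delta^1 \to \Delta^n$ that sends $(k,0) \mapsto 0$ and $(k,1) \mapsto k$; it restricts to the constant map $0 \circ \pi$ at $\Delta^n \times \{0\}$ and to $\id_{\Delta^n}$ at $\Delta^n \times \{1\}$. Cotensoring into $X$ and using the natural isomorphism $X^{\Delta^n \times \Delta^1} \cong (X^{\Delta^n})^{\Delta^1}$ yields a morphism
\[
h : X^{\Delta^n} \longrightarrow (X^{\Delta^n})^{\Delta^1}
\]
whose endpoint $h_0$ is $i \circ p$ and whose endpoint $h_1$ is $\id_{X^{\Delta^n}}$.

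Applying Lemma \ref{lem:simplicial_htpy} with $Y := X^{\Delta^n}$ then gives $i \circ p = \id_{X^{\Delta^n}}$ in $\mathrm{Ho}(\cal M)$. Combined with $p \circ i = \id_X$, the map $i$ is an isomorphism in the homotopy category, hence a weak equivalence of $\cal M$. I do not anticipate any real obstacle here: the only thing to watch is the bookkeeping of which endpoint of $H$ gives which composite, and the invocation that a morphism in a model category is a weak equivalence exactly when its image in the homotopy category is an isomorphism.
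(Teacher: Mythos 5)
Your proposal is correct and is essentially the paper's own argument: the same retraction $p$ induced by $\{0\}\subset\Delta^{n}$, the same homotopy $H:\Delta^{n}\times\Delta^{1}\to\Delta^{n}$, and the same application of Lemma \ref{lem:simplicial_htpy} to show $i\circ p$ equals the identity in the homotopy category, concluding via the standard fact that weak equivalences are exactly the maps inverted in $\mathrm{Ho}(\cal M)$.
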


\begin{proof}
Let $r:X^{\Delta^{n}}\to X$ denote the retraction of $i$ induced
by the inclusion $\{0\}\subset\Delta^{n}$. We will show that the
map $ir:X^{\Delta^{n}}\to X^{\Delta^{n}}$ represents the same morphism
as the identity morphism in the homotopy category of the underlying
model category of $\cal M$. 

Define a map $H:\Delta^{n}\times\Delta^{1}\to\Delta^{n}$ by setting
$H\pr{i,0}=0$ and $H\pr{i,1}=i$ for each $0\leq i\leq n$. The claim
follows by applying Lemma \ref{lem:simplicial_htpy} to the map $h:X^{\Delta^{n}}\to X^{\Delta^{n}\times\Delta^{1}}$
induced by $H$.
\end{proof}
We can now prove Theorem \ref{thm:main}.
\begin{proof}
[Proof of Theorem \ref{thm:main}]The functors $\pr{t^{+}}^{!}$ and
$N=\Unm\circ\pr{t^{+}}^{!}$ are right Quillen equivalences (Theorem
\ref{thm:relations}), so they reflect weak equivalences of fibrant
objects. It follows that if these functors preserve weak equivalences,
then they reflect weak equivalences. It thus suffices to prove that
(1)$\implies$(2)$\implies$(3).

We begin with the proof of (1)$\implies$(2). Suppose that $f$ is
a cartesian equivalence. We must show that $\pr{t^{+}}^{!}\pr f$
is a weak equivalence of $\BS_{\CSS}^{+}$. By Theorem \ref{thm:relations},
it suffices to show that $\pr{t^{+}}^{!}\pr f$ is a Reedy weak equivalence.
In other words, it suffices to show that, for each $m\geq0$, the
map $\overline{X}^{\pr{\Delta^{m}}^{\sharp}}\to\overline{Y}^{\pr{\Delta^{m}}^{\sharp}}$
between the $m$th rows of $\overline{X}$ and $\overline{Y}$ is
a cartesian equivalence. This is immediate from Corollary \ref{cor:diagonal_weq}
and our assumption that $f$ is a cartesian equivalence.

Next we show that (2)$\implies$(3). Suppose that $\pr{t^{+}}^{!}\pr f$
is a weak equivalence. We must show that  $N\pr f$ is a weak equivalence.
In other words, we must show that, for each complete Segal space $\cal C$,
the map
\[
\Map\pr{N\pr{\overline{Y}},\cal C}\to\Map\pr{N\pr{\overline{X}},\cal C}
\]
is a homotopy equivalence of Kan complexes. Since $\pr{t^{+}}^{!}\pr f$
is a weak equivalence of $\BS_{\CSS}^{+}$, this will follow from
the following assertion:
\begin{itemize}
\item [($\ast$)]Let $\overline{A}=\pr{A,S}$ be a marked simplicial set.
The map $\Map\pr{\pr{t^{+}}^{!}\pr{\overline{A}},\cal C^{\natural}}\to\Map\pr{N\pr{\overline{A}},\cal C}$
is the identity map of simplicial sets.
\end{itemize}
To prove ($\ast$), recall that $\Map\pr{\pr{t^{+}}^{!}\pr{\overline{A}},\cal C^{\natural}}$
is a union of components of $\Map\pr{N\pr{\overline{A}},\cal C}$
(Remark \ref{rem:map_sat}). It thus suffices to show that the map
$\Map\pr{\pr{t^{+}}^{!}\pr{\overline{A}},\cal C^{\natural}}\to\Map\pr{N\pr{\overline{A}},\cal C}$
is surjective on vertices. 

Let $\varphi:N\pr{\overline{A}}\to\cal C$ be a map of bisimplicial
sets. We wish to show that $\varphi$ lifts to a map $\pr{t^{+}}^{!}\pr{\overline{A}}\to\cal C^{\natural}$
of marked bisimplicial sets. For this, it suffices to show that, for
each element $\alpha:a\to b$ of $S$, its image $\varphi_{1,0}\pr{\alpha}\in\cal{\cal C}_{1,0}$
is an equivalence of $\cal C$. Consider the diagram $\sigma:\Delta^{1}\times\Delta^{1}\to A$
which we depict as % https://q.uiver.app/#q=WzAsNCxbMCwwLCJhIl0sWzEsMCwiYiJdLFsxLDEsImIiXSxbMCwxLCJiIl0sWzAsMSwiXFxhbHBoYSJdLFszLDIsIjFfYiIsMl0sWzAsMywiXFxhbHBoYSIsMl0sWzEsMiwiMV9iIl1d
\[\begin{tikzcd}
	a & b \\
	b & b.
	\arrow["\alpha", from=1-1, to=1-2]
	\arrow["{1_b}"', from=2-1, to=2-2]
	\arrow["\alpha"', from=1-1, to=2-1]
	\arrow["{1_b}", from=1-2, to=2-2]
\end{tikzcd}\]Since $\alpha$ is marked, $\sigma$ determines an edge $\alpha\to1_{b}$
in the first column $N\pr{\overline{A}}_{1}$ of $N\pr{\overline{A}}$.
Its image $\varphi_{1,1}\pr{\sigma}\in\cal C_{1,1}$ is a path from
$\varphi_{1,0}\pr{\alpha}$ to $\varphi_{1,0}\pr{1_{b}}$ in the Kan
complex $\cal C_{1}$. Since $\varphi_{1,0}\pr{1_{b}}$ is an equivalence
of $\cal C$, so is $\varphi_{1,0}\pr{\alpha}$. The proof is now
complete.
\end{proof}
Let us look at two applications of Theorem \ref{thm:main}. As a first
application, we prove a generalization of Mazel-Gee's localization
theorem (Theorem \ref{thm:MGloc}). For this, we slightly extend our
definition of localizations of $\infty$-categories (Definition \ref{def:loc}).
\begin{defn}
Let $\pr{X,S}$ be a marked simplicial set and let $\cal D$ be an
$\infty$-category. A morphism $f:X\to\cal D$ of simplicial sets
is said to \textbf{exhibit $\cal D$ as a localization of $X$ with
respect to $S$} if it satisfies the following condition:
\begin{itemize}
\item [($\ast$)]For every $\infty$-category $\cal E$, the functor
\[
\Fun\pr{\cal D,\cal E}\to\Fun\pr{X,\cal E}
\]
is fully faithful, and its essential image consists of the diagrams
$X\to\cal E$ which carry every edge in $S$ to an equivalence.
\end{itemize}
\end{defn}

\begin{cor}
[A Generalization of Mazel-Gee's Localization Theorem]\label{cor:localization_theorem}Let
$\pr{X,S}$ be a marked simplicial set, let $\cal D$ be an $\infty$-category,
and let $f:\pr{X,S}\to\cal D^{\natural}$ be a morphism of marked
simplicial sets. The following conditions are equivalent:
\begin{enumerate}
\item The diagram $f:X\to\cal D$ exhibits $\cal D$ as a localization of
$X$ with respect to $S$.
\item The map $N\pr f:N\pr{X,S}\to N\pr{\cal D^{\natural}}$ is a weak equivalence
of $\BS_{\CSS}$.
\end{enumerate}
\end{cor}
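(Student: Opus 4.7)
The plan is to reduce the statement to Theorem \ref{thm:main} by identifying condition (1) with the assertion that $f$ is a cartesian equivalence of marked simplicial sets. Once this identification is made, the corollary is immediate, since Theorem \ref{thm:main} gives the equivalence between $f$ being a cartesian equivalence and $N(f)$ being a Rezk equivalence.

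To carry out the identification, I would first note two trivial facts. First, because $f$ is a morphism of marked simplicial sets $(X,S)\to\mathcal{D}^{\natural}$, it automatically sends every edge in $S$ to an equivalence of $\mathcal{D}$; hence, for any $\infty$-category $\mathcal{E}$, precomposition $f^{*}\colon\Fun(\mathcal{D},\mathcal{E})\to\Fun(X,\mathcal{E})$ factors through the full subcategory $\Fun^{S}(X,\mathcal{E})$. Second, since every functor between $\infty$-categories preserves equivalences, we have $\Fun^{T}(\mathcal{D},\mathcal{E})=\Fun(\mathcal{D},\mathcal{E})$, where $T$ denotes the set of equivalences of $\mathcal{D}$ (so that $\mathcal{D}^{\natural}=(\mathcal{D},T)$). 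With these remarks, the definition of cartesian equivalence specialized to $f$ reads exactly: for every $\infty$-category $\mathcal{E}$, the induced functor
\[
\Fun(\mathcal{D},\mathcal{E})\longrightarrow\Fun^{S}(X,\mathcal{E})
\]
is a categorical equivalence. Since $\Fun^{S}(X,\mathcal{E})$ sits inside $\Fun(X,\mathcal{E})$ as a full subcategory, a functor landing in $\Fun^{S}(X,\mathcal{E})$ is a categorical equivalence onto this subcategory if and only if $f^{*}\colon\Fun(\mathcal{D},\mathcal{E})\to\Fun(X,\mathcal{E})$ is fully faithful with essential image equal to $\Fun^{S}(X,\mathcal{E})$. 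The latter is precisely the definition that $f$ exhibits $\mathcal{D}$ as a localization of $X$ at $S$. So condition (1) is equivalent to $f$ being a cartesian equivalence.

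Combining this equivalence with the implication (1)$\Leftrightarrow$(3) of Theorem \ref{thm:main} finishes the proof. There is no substantial obstacle: the only subtlety is the (purely formal) translation between \emph{fully faithful with prescribed essential image} and \emph{categorical equivalence onto a full subcategory}, which is automatic because $\Fun^{S}(X,\mathcal{E})$ is by definition a full subcategory of $\Fun(X,\mathcal{E})$.
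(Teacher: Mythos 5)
Your proposal is correct and follows exactly the paper's argument: identify condition (1) with the statement that $f$ is a cartesian equivalence (using that $\Fun^{T}(\mathcal{D},\mathcal{E})=\Fun(\mathcal{D},\mathcal{E})$ when $T$ is the equivalences and that $f^{*}$ lands in the full subcategory $\Fun^{S}(X,\mathcal{E})$), and then invoke Theorem \ref{thm:main}. The paper's proof is just a terser version of this same reduction, so there is nothing to add.
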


\begin{proof}
Condition (1) is equivalent to the condition that the map $\pr{X,S}\to\cal D^{\natural}$
be a cartesian equivalence, so the claim is a consequence of Theorem
\ref{thm:main}.
\end{proof}
We now turn to the second application of Theorem \ref{thm:main},
which offers usable criteria to check that a certain functor is a
localization functor.
\begin{cor}
\label{cor:localization_by_column}Let $f:\overline{X}\to\overline{Y}$
be a morphism of  marked simplicial sets. Suppose that one of the
following conditions are satisfied:
\begin{enumerate}
\item For each $n\geq0$, the map $N\pr f:N\pr{\overline{X}}\to N\pr{\overline{Y}}$
induces a weak homotopy equivalence in the $n$th columns.
\item For each $m\geq0$, the map $N\pr f:N\pr{\overline{X}}\to N\pr{\overline{Y}}$
induces a weak categorical equivalence in the $m$th rows.
\end{enumerate}
Then $f$ is a cartesian equivalence of marked simplicial sets.
\end{cor}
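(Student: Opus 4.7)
The plan is to reduce the statement directly to Theorem \ref{thm:main} together with known facts about Rezk equivalences of bisimplicial sets. By Theorem \ref{thm:main}, the morphism $f$ is a cartesian equivalence of marked simplicial sets if and only if $N(f)$ is a weak equivalence in $\BS_{\CSS}$. Consequently, it suffices to check that $N(f)$ is a Rezk equivalence under either of the hypotheses (1) or (2).

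Under hypothesis (1), we invoke the fact that the complete Segal space model structure on $\BS$ is a Bousfield localization of a model structure (e.g.\ the vertical model structure built from the Kan--Quillen model structure on $\SS$) whose weak equivalences are precisely the column-wise weak homotopy equivalences; this is contained in \cite[Theorem 7.2]{Rezk01}. Hence any column-wise weak homotopy equivalence of bisimplicial sets is automatically a Rezk equivalence, and $N(f)$ is one.

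Under hypothesis (2), the analogous fact for rows is \cite[Theorem 4.5]{JT07}: the complete Segal model structure arises as a Bousfield localization of the Reedy model structure on $\BS \cong \SS_{\mathrm{Joyal}}^{\Del^{\op}}$, so row-wise categorical equivalences are Rezk equivalences, and again $N(f)$ is one.

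In both cases, $N(f)$ is a weak equivalence of $\BS_{\CSS}$, and Theorem \ref{thm:main} then yields the conclusion that $f$ is a cartesian equivalence. There is no real obstacle here; the corollary is essentially a formal consequence of Theorem \ref{thm:main}, since the two sufficient conditions for Rezk equivalence used here are precisely those mentioned in the introduction as the input to Corollary \ref{cor:MG_loc}.
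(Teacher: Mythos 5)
Your proof is correct and follows essentially the same route as the paper: both reduce the statement to Theorem \ref{thm:main} and then invoke the standard facts that column-wise weak homotopy equivalences (Rezk, resp.\ the vertical model structure of Joyal--Tierney) and row-wise weak categorical equivalences (Joyal--Tierney, Theorem 4.5) are weak equivalences in the complete Segal space model structure. The only difference is the precise choice of citations, which is immaterial.
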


\begin{proof}
Suppose that condition (1) holds. According to \cite[Theorem 2.6, 4.5]{JT07},
column-wise weak homotopy equivalences of bisimplicial sets are weak
equivalences in the complete Segal space model structure. It follows
from Theorem \ref{thm:main} that $f$ is a cartesian equivalence.
Likewise, since row-wise weak categorical equivalences are weak equivalences
in the complete Segal space model structure \cite[Theorem 4.5]{JT07},
condition (2) implies that $f$ is a cartesian equivalence.
\end{proof}
\providecommand{\bysame}{\leavevmode\hbox to3em{\hrulefill}\thinspace}
\providecommand{\MR}{\relax\ifhmode\unskip\space\fi MR }
% \MRhref is called by the amsart/book/proc definition of \MR.
\providecommand{\MRhref}[2]{%
  \href{http://www.ams.org/mathscinet-getitem?mr=#1}{#2}
}
\providecommand{\href}[2]{#2}

\end{document}